\numberwithin{equation}{section}
\renewcommand\Im{\mathop{\rm Im}}
\renewcommand\Re{\mathop{\rm Re}}
\newcommand\p{\mathbf{x}}
\def\func#1{\mathop{\rm #1}}
\newtheorem{theorem}{Theorem}[section]
\newtheorem{corollary}[theorem]{Corollary}
\newtheorem{lemma}[theorem]{Lemma}
\newtheorem{proposition}[theorem]{Proposition}
\newtheorem{remark}[theorem]{Remark}
\title{Asymptotic series for the splitting of separatrices\\ near a Hamiltonian bifurcation}
\author{V.~Gelfreich  and N. Br\"annstr\"om\thanks{This work is partially supported by the EPSRC grant EP/C000595/1.}
\\[6pt]
Mathematics Institute\\ University of Warwick\\[6pt]
V.Gelfreich@warwick.ac.uk\\
N.L.A.Brannstrom@warwick.ac.uk}
\begin{document}
\maketitle

\begin{abstract}
This is a proof of an asymptotic formula which describes exponentially small
splitting of separatrices in a generic analytic family of
area-preserving maps near a Hamiltonian saddle-centre bifurcation.
As a particular case and in combination with an earlier work on a Stokes constant for the
H\'enon map (Gelfreich, Sauzin (2001)), it implies exponentially small transversality of
separatrices in the area-preserving H\'enon family 
when the multiplicator of the fixed point is close to one.
\end{abstract}

{
\tableofcontents
}
\newpage

There has been a substantial progress in the study of exponentially small splitting
of separatrices (see for example \cite{Delshams,Seara,Ramirez,Treschev98a,Treschev98b}).
In most cases exponentially small asymptotic expansions are very sensitive to the form of the
system. In the case of local bifurcations with an integrable normal
form some generically valid formulae have been obtained~\cite{Lombardi,Gel00,Gel02}.
In degenerate cases the splitting of separatrices may be studied by the Melnikov
method \cite{Seara,Delshams} but application of this method to
exponentially small phenomena add substantial restrictions onto
systems under consideration (see e.g. survey \cite{GL01}). 
Since the pioning work by Lazutkin, it is accepted that 
in the case when Melnikov method fails the asymptotic formulae
contain a special pre-factor which comes from a parameter-independent
problem and can be interpreted as a Stokes constant (see e.g. \cite{HakimMallock}).
For the class of system considered in the present paper, that problem
is considered separately in \cite{GN2008}.

We note that transversality of separatrices plays an important role in some
applications (\cite{Duarte99,Duarte00}).

\section{Introduction}

Let $f_\varepsilon$ be an analytic family of area-preserving maps
defined in a neighbourhood of the origin. 
We assume that when $\varepsilon=0$
the origin is a parabolic fixed point of the map.
This means that $f_0(\mathbf{0})=\mathbf{0}$ and one is a double eigenvalue
of the Jacobian $Df_0(\mathbf{0})$. These eigenvalues are called
{\em multipliers\/} of the fixed point. In the generic case the Jacobian
is not diagonalisable. Then we may assume that
\begin{equation}\label{Eq:Df0}
Df_0(\mathbf{0})=\left(\begin{array}{ll}1&1\\0&1\end{array}\right)\,,
\end{equation}
as this form can be achieved by a linear area-preserving
change of variables. Since the multipliers are equal to one
the fixed point is not structurally stable and can be destroyed
by a small perturbation of $f_0$. We will consider 
the non-degenerate bifurcation.
This means that two leading coefficients
in a normal form of $f_\varepsilon$ do not vanish: 
one of them is a leading non-linear term of $f_0$ and the second one
guarantees that $f_\varepsilon$ is a ``generic unfolding" of~$f_0$. 

Let us state these two assumptions in a more precise way. We will
see that the bifurcation can be described by an integrable normal form.
We will provide a detailed description of the normal form later but at
the moment we note that the map $f_\varepsilon$ can be
approximated by a time one map of an autonomous Hamiltonian flow.
The leading order of the Hamiltonian has the form
\begin{equation}\label{Eq:h6non}
h_6=\frac{y^2}2 +\frac{ax^3}3 -bx\varepsilon\,.
\end{equation}
In this paper we assume
\begin{equation}\label{Eq:nondegeneracy}
a,b>0.
\end{equation}
The positivity assumption is not more restrictive than $ab\ne 0$. Indeed,
the sign of $a$ and $b$ can be changed by substitutions
$x\mapsto -x$ and $\varepsilon\mapsto-\varepsilon$.
Using assumption (\ref{Eq:nondegeneracy}) 
we ensure that the phenomena we study in this paper are on 
the side of positive $\varepsilon$.

Near the bifurcation the map $f_\varepsilon$ is rather accurately approximated 
by the time one map $\Phi_{h_6}^1$ generated by the Hamiltonian~(\ref{Eq:h6non}).
When $\varepsilon$ changes sign an equilibrium of $h_6$
bifurcates following the scenario sketched in Figure~\ref{Fig:h6bif}.
For $\varepsilon>0$, separatrices of the saddle point form
a loop around the elliptic fixed point.

\begin{figure}
\boxed{\includegraphics[width=0.3\textwidth,height=0.2\textwidth]{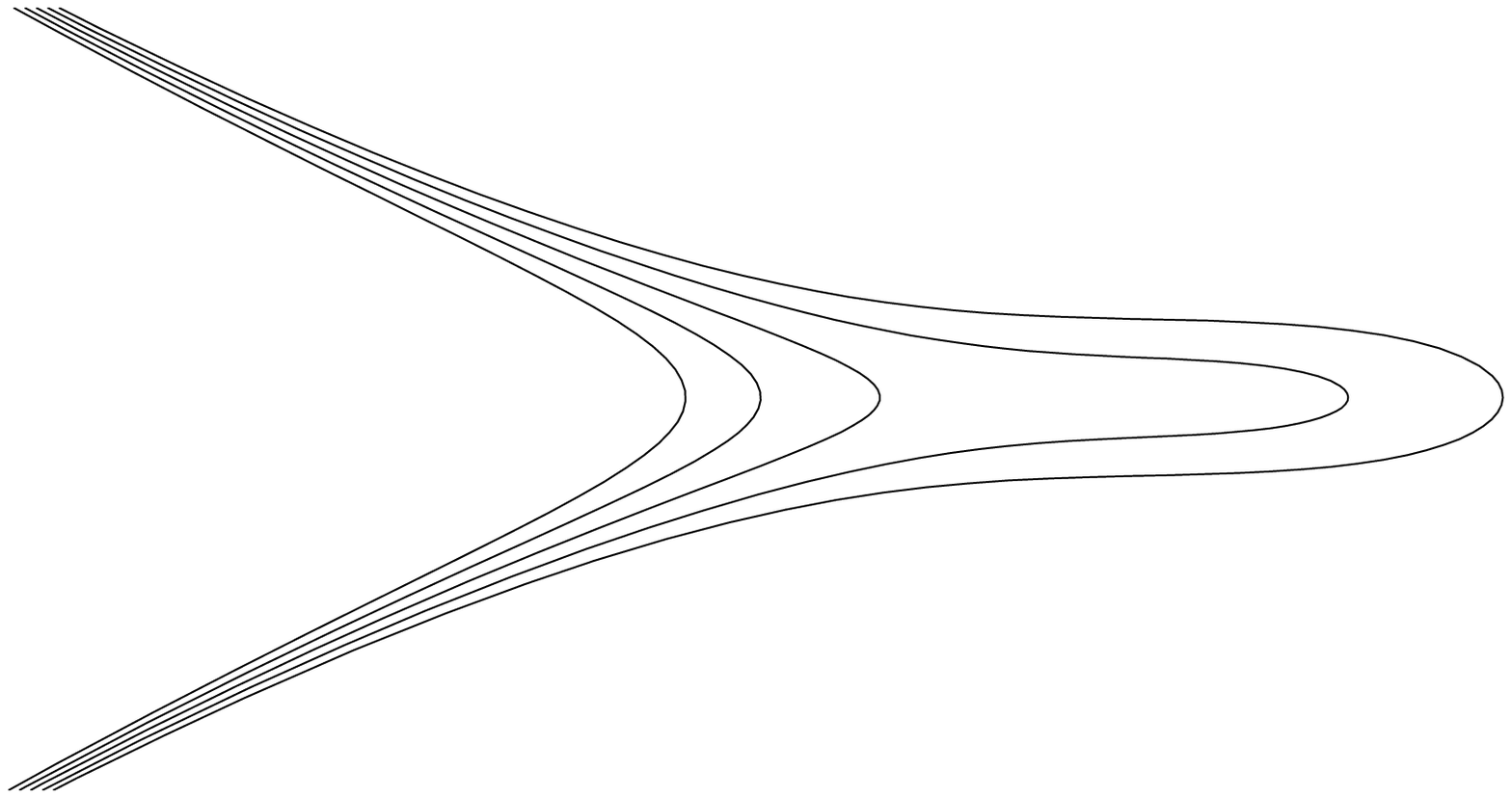}}
\boxed{\includegraphics[width=0.3\textwidth,height=0.2\textwidth]{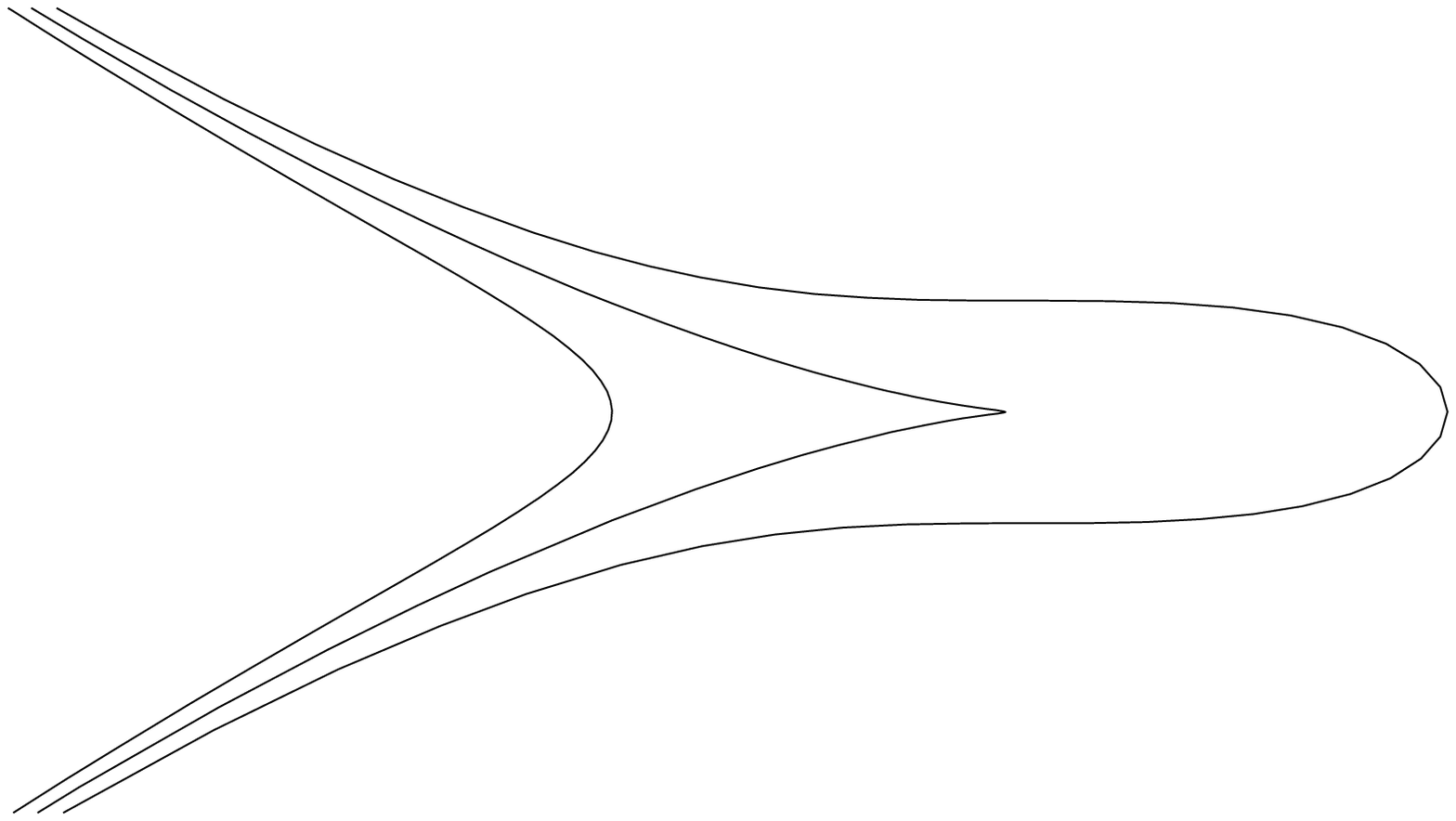}}
\boxed{\includegraphics[width=0.3\textwidth,height=0.2\textwidth]{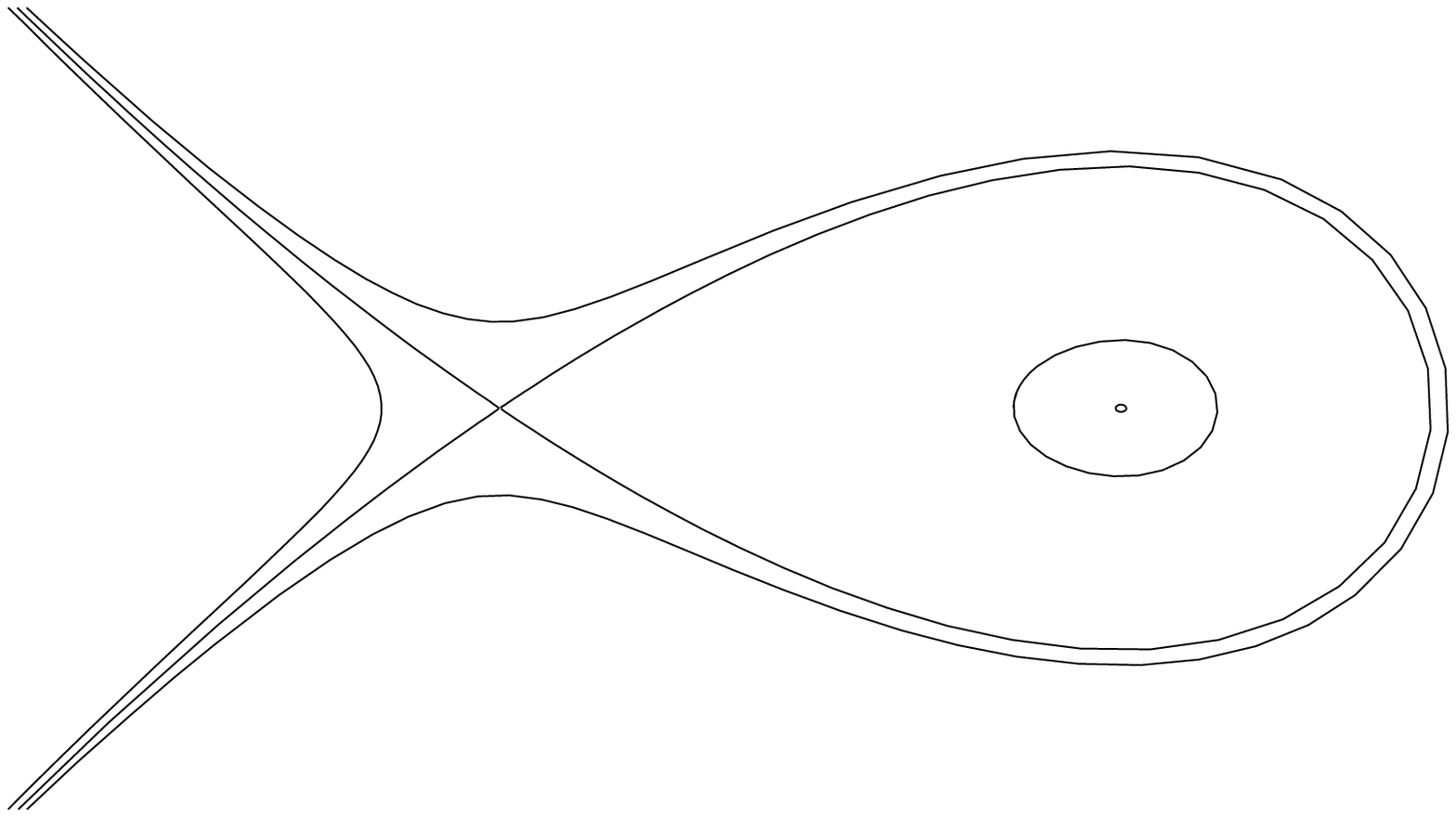}}
\caption{Bifurcation of the normal form: a small separatrix loop is created after
a Hamiltonian saddle-centre bifurcation\label{Fig:h6bif}}
\end{figure}

Under these assumptions $f_\varepsilon$ with $\varepsilon>0$ 
has two fixed points which collide at the origin when $\varepsilon=0$ and then disappear. 
One of the fixed points is elliptic (centre) and another one is hyperbolic (saddle). 
For this reason the bifurcation is sometimes called a Hamiltonian saddle-centre bifurcation.

The bifurcation of the map is very similar to the bifurcation
of the leading order normal form (\ref{Eq:h6non}). 
Unlike the normal form, we do not expect a generic 
map to be integrable and therefore the separatrices
of the map may split. We will derive an asymptotic formula
which describes the splitting of the separatrices. This formula implies that
in a generic family of maps the separatrices intersect transversally for all sufficiently
small $\varepsilon$.

We note that the area preserving H\'enon map satisfies assumptions of our theorem.

The leading order of the asymptotic expansion was described in~\cite{Gel00,Gel02}.
The goal of this paper is to present a complete proof
of a refined version of this asymptotic formula.
The proof is based on the programme of~\cite{G99} and is
a development of the original idea proposed by Lazutkin in \cite{Lazutkin1984}.

Let us state the main result of the paper. First we note that
many of the series involved in our proof
involve powers of $\varepsilon^{1/4}$. It is therefore
convenient to introduce a new small parameter $\delta=\varepsilon^{1/4}$.
We will occasionally use both of them indicating the type of
a power series expansion for the corresponding function.

In this paper we will study separatrices of a
family of hyperbolic fixed points:
\[
\mathbf{p}_\delta=f_\varepsilon(\mathbf{p}_\delta)
\]
such that $\mathbf{p}_0=\mathbf{0}$.
Let $\lambda_\delta\ge1$ be the largest of the two multipliers of $f_\varepsilon$
at $\mathbf{p}_\delta$.
We will prove that $\mathbf{p}_\delta$ and $\lambda_\delta$ depend analytically
on $\delta$. Moreover, $\log\lambda_\delta$ is of the same order of smallness
as $\delta$ itself. We will see that 
\begin{equation}\label{Eq:lambda}
\log\lambda_\delta=\sqrt[4]{4ab}\,\delta+O(\delta^3)\,.
\end{equation}

It is convenient to represents stable and unstable separatrices $W^\pm(\mathbf{p}_\delta)$
using solutions of the following finite-difference equation 
(for a more detailed discussion see the review~\cite{GL01})
\begin{equation}\label{Eq:mainfde}
\psi^\pm(t+\log\lambda_\delta)=f_\varepsilon\circ \psi^\pm(t)\,.
\end{equation}
We have suppressed explicit dependence of $\psi^\pm$ on $\delta$
to shorten the notation. An application of $f_\varepsilon$ 
increases the parameter $t$ by $\log\lambda_\delta>0$.
Therefore the conditions
\[
\lim_{t\to-\infty}\psi^-(t)= \mathbf{p}_\delta
\qquad\mbox{and}\qquad
\lim_{t\to+\infty}\psi^+(t)= \mathbf{p}_\delta
\]
imply that $\psi^-$ represents the unstable separatrix
and $\psi^+$ represents the stable one.
It is easy to see that these conditions do not defined the functions $\psi^\pm$ uniquely.

The most convenient parameterization is obtained using analytic linearizations of restrictions of 
$f_\varepsilon$ onto $W^-(\mathbf{p}_\delta)$ and $W^+(\mathbf{p}_\delta)$ respectively.
These parametrisations can be represented in the form
$\psi^-(t)=\Psi^-(e^t)$ and $\psi^+(t)=\Psi^+(e^{-t})$ where $\Psi^\pm(z)$ 
are analytic at $0$ and $\Psi^\pm(0)=\mathbf p_\delta$.
This choice reduces the freedom in $\psi^\pm$ to a translation $t\mapsto t+t_0$.
Our proof of their existence does not use the linearization but relies
on a contraction mapping argument which has the advantage of providing
an accurate approximation for the functions.

From the technical point of view a substantial part of the present paper is
dedicated to a detailed study of the analytic continuation for $\psi^\pm$.

Let $\gamma\in W^-(\mathbf{p}_\delta)\cap W^+(\mathbf{p}_\delta)$ 
be an homoclinic point. Then
\[
\gamma=
\psi^+(t_s)=\psi^-(t_u)
\]
for some $t_u$ and $t_s$. The vectors $\dot\psi^-(t_u)$ and $\dot\psi^+(t_s)$
are tangent respectively to $W^-(\mathbf{p}_\delta)$ and $W^+(\mathbf{p}_\delta)$ 
at $\gamma$.
The area of the parallelogram generated by these tangent vectors
is called {\em the homoclinic invariant of $\gamma$}:
\[
\omega(\gamma)=\Omega\left(\dot\psi^-(t_s),\dot\psi^+(t_u)\right)
\,,
\]
where $\Omega$ stands for the standard area form.
The homoclinic invariant has some advantages over alternative 
measures for the separatrices splitting (see the review \cite{GL01}
for a discussion).

\begin{theorem}[Main Theorem]
There are constants $a_k\in\mathbb{R}$ such that the homoclinic invariant 
of each of the two primary homoclinic orbits has the form
\begin{equation}\label{Eq:mainexpansion}
\omega \asymp\pm \frac{2\pi }{\log \lambda_\delta ^{2}}%
{\mathrm e}^{-\frac{2\pi ^{2}}{\log \lambda_\delta }}
\sum_{k\ge 0}a_{k}\delta ^{2k}.
\end{equation}

\end{theorem}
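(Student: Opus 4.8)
The plan is to follow the Lazutkin programme adapted in~\cite{G99}, rescaling near the bifurcation so that $f_\varepsilon$ becomes a small perturbation of the time-$\log\lambda_\delta$ map of the Hamiltonian flow of~\eqref{Eq:h6non}. After the rescaling $x=\delta^2\xi$, $y=\delta^3\eta$, $t\mapsto$ new time, the homoclinic loop of~$h_6$ acquires an explicit meromorphic parametrisation (the separatrix solution of $\ddot\xi+a\xi^2-b=0$, an elliptic/rational-trigonometric function with poles at a distance $\sim\pi/\sqrt[4]{4ab}$ from the real axis in the rescaled time, which after multiplication by $\log\lambda_\delta$ becomes the genuine singularity width and produces the factor $\exp(-2\pi^2/\log\lambda_\delta)$). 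First I would establish analyticity of $\mathbf p_\delta$ and $\lambda_\delta$ and prove the expansion~\eqref{Eq:lambda} by a fixed-point argument on the fixed-point equation and the characteristic equation of $Df_\varepsilon(\mathbf p_\delta)$. Next I would construct the parametrisations $\psi^\pm$ of~\eqref{Eq:mainfde} via a contraction mapping in a suitable Banach space of analytic functions, first on the real line and then, crucially, on a complex domain that extends up to a distance $O(\log\lambda_\delta)$ below (resp. above) the real axis but stays a fixed small distance away from the singularities of the limit separatrix — a ``$\gamma$-shaped'' or ``boomerang'' domain. The contraction is set up so that $\psi^\pm$ is well approximated by the rescaled separatrix of $h_6$ plus computable correction terms, giving asymptotics with an explicit remainder.

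The heart of the matter is then the \emph{difference} $\psi^-(t)-\psi^+(t)$ on the overlap of the two domains. Writing this difference and feeding it into~\eqref{Eq:mainfde}, one sees it satisfies a linear finite-difference equation (the variational equation along the separatrix) up to quadratic errors; its leading behaviour is governed by the $\log\lambda_\delta$-periodic ``splitting function'' whose Fourier coefficients are exponentially small. The key step is to push the analytic continuation of $\psi^\pm$ as close as possible to the two complex-conjugate singularities nearest the real axis, evaluate the difference there, and extract the dominant Fourier harmonic. The constant $2\pi$ and the prefactor $(\log\lambda_\delta)^{-2}$ come out of the residue/asymptotic analysis of the limit problem near its singularity, i.e. from the Stokes constant of the parameter-independent inner equation studied in~\cite{GN2008}; the real coefficients $a_k$ arise because each order of the outer expansion contributes a meromorphic correction whose singular part feeds a well-defined contribution to the amplitude. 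Because the homoclinic invariant $\omega(\gamma)=\Omega(\dot\psi^-,\dot\psi^+)$ is, up to a nonzero analytic factor, exactly the value of this splitting function at the (real) homoclinic times $t_u,t_s$ determined by the reversibility symmetry, one obtains~\eqref{Eq:mainexpansion}, the two signs corresponding to the two primary homoclinic orbits exchanged by that symmetry.

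The asymptotic (rather than convergent) nature of the series is handled by carrying the outer expansion to order $N$, controlling the $N$-dependent remainder, and optimising $N$ against $\delta$; this is why the statement uses $\asymp$. The main obstacle I anticipate is the \emph{matching} between the outer parametrisations $\psi^\pm$ and the inner (rescaled-near-singularity) solution: one must prove that the inner equation has a solution with the prescribed behaviour at infinity matching every order of the outer expansion, that this solution's ``$\log$-periodic'' defect is controlled, and that the Stokes constant extracted from~\cite{GN2008} indeed equals the coefficient $a_0$ up to the explicit elementary factors. Establishing the requisite analytic estimates on the boomerang-shaped domains — in particular uniformity as $\delta\to0$ while the domain width shrinks like $\log\lambda_\delta$ — and proving that the contraction constants and remainder bounds behave well enough to justify term-by-term extraction of the exponentially small difference, is the technically heaviest part and, I expect, where most of the paper's effort will go.
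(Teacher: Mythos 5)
Your overall strategy — rescaling, complex continuation towards the singularity at $\pm i\pi$, inner/outer matching with the Stokes constant from~\cite{GN2008}, and extraction of the dominant Fourier harmonic of a $1$-periodic splitting function — is essentially the route the paper takes, and the anticipated difficulties (uniformity of estimates on domains whose width is $O(\log\lambda_\delta)$, and the matching) are exactly where the paper's technical effort goes. However, there are three concrete gaps.

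First, you assert that the two primary homoclinic orbits are ``exchanged by the reversibility symmetry.'' The paper makes no reversibility assumption and does not use one; the existence of exactly two primary homoclinic orbits comes from the splitting function being almost sinusoidal (dominated by $\Theta_{-1}e^{-2\pi i T}+\Theta_{-1}^*e^{2\pi iT}$ plus a smaller error), which forces exactly two simple zeros per period. Invoking reversibility would prove the theorem only for a restricted class of families.

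Second, you identify the homoclinic invariant with ``the value of this splitting function at the (real) homoclinic times.'' The splitting function $\Theta(T)=E\circ\psi^+(T\log\lambda_\delta)$ vanishes at a homoclinic time by definition; the correct relation (which the paper derives from the symplectic flow-box coordinates $(T,E)$ and the identity $\nabla E(\psi^-)=-\log\lambda_\delta\,\mathrm J\dot\psi^-$) is
\[
\omega=\frac{1}{\log^2\lambda_\delta}\,\Theta'(T_1),\qquad\Theta(T_1)=0.
\]
This is what produces the prefactors $2\pi/\log^2\lambda_\delta$ in~\eqref{Eq:mainexpansion}, the $2\pi$ coming from differentiating the dominant harmonic $e^{\pm 2\pi i T}$.

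Third — and this is the step most likely to sink an argument that stops at ``Fourier coefficients are exponentially small'' — you give no mechanism to control the zeroth Fourier coefficient $\Theta_0$. The bound $|\Theta_k|\le E_0(\delta)\,e^{-|k|2\pi\rho/\log\lambda_\delta}$ from shifting the contour is useless for $k=0$, and a priori $\Theta_0$ could dwarf $\Theta_{\pm1}$. The paper kills $\Theta_0$ by an area argument: the closed loop formed by the stable and unstable separatrix segments between a homoclinic point and its image has zero algebraic area (area preservation), which in flow-box coordinates reads $\int_{T_1}^{T_1+1}\Theta(s)\,g'(s)\,ds=0$ with $g(s)=T\circ\psi^+(s\log\lambda_\delta)$; combined with $\int(1-g')=0$ this yields $\Theta_0=\int(\Theta-\Theta_0)(1-g')$, a product of two zero-mean periodic functions each $O(e^{-2\pi\rho/\log\lambda_\delta})$, hence $\Theta_0=O(e^{-4\pi\rho/\log\lambda_\delta})$. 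Without this step (or an equivalent one based on area preservation and the normalisation of the flow-box map $S$), the claim that the first harmonic dominates is unjustified, and the asymptotic formula cannot be concluded.

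Finally, a smaller point: your ``variational equation for $\psi^--\psi^+$'' framing is a legitimate alternative to the flow-box construction, but the flow-box picture is what makes the two key identities above (the $\Theta'$ formula and the zero-area relation) transparent, so you would still need to build the symplectic energy-time coordinates or replicate these two facts by some other means.
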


This theorem implies that for a generic family the separatrices intersect
transversally. Indeed, we note that in (\ref{Eq:mainexpansion}) the pre-factor 
is exponentially small compared to $\delta$ due to the smallness 
of $\log\lambda_\delta$ estimated in~(\ref{Eq:lambda}).
Nevertheless for small $\delta>0$ the theorem implies 
transversality of the homoclinic points
provided $a_0\ne0$. We note that $a_0$ describes the splitting of complex
parabolic invariant manifolds
for function $f_0$ and does not vanish generically (on an open dense subset). 
In \cite{GS01} it was proved that $a_0\ne0$ for the H\'enon map.
Finally, numerical evaluation of $a_0$ for a given $f_0$ 
is a relatively easy task.

\medskip

The difficulty of this theorem is due to the exponential
smallness of the separatrices splitting which is to
be detected on the background of much larger effects.

\medskip

The proof consists of the following main steps:
\begin{enumerate}
\item We show that $f_\varepsilon$ can be formally represented as
a time-one map of a Hamiltonian flow and provide a normal form
theory for the flow.

\item We develop a general theory of close-to-identity maps which provides
sufficient condition for the existence of saddle points and their separatrices.
Moreover, we prove that the separatrices of the map can be quite accurately
approximated by separatrices of a flow which approximately interpolates the map.

\item We show that 
In the complexified time domain
the separatrix of the interpolating flow
is close to the separatrix of $f_\varepsilon$ in a set which has 
a non-empty intersection with a $\delta$-neighbourhood of~$\pm i\pi$.

\item
We show that in the latter region an approximation
based on the separatrices of the parabolic fixed point of $f_0$
provides an even more accurate approximation.
This approximation distinguishes between the stable and unstable
separatrices of $f_\varepsilon$.

\item
We show that flow-box time coordinates can be introduced.

\item
Flow-box coordinates are used to get an asymptotic expansion for the splitting
of separatrices.
\end{enumerate}

\medskip

The rest of the paper is dedicated to proving the main theorem
and is structured in the following way. Section~\ref{Se:overview}
contains a detailed overview of the proof leaving technical
estimates to corresponding sections. Section~\ref{Se:normalform}
describes the construction of the normal form for the bifurcation
up an arbitrary order. Section~\ref{Se:closetoid} develops
a general theory of close to identity maps, which can be
of independent interest since it covers a much wider class
of maps than that studied in other parts of this paper. In Section~\ref{Se:FS} we 
provide a detailed
description of a formal solution of the separatrix equation (\ref{Eq:mainfde}).

We will prove that $f_\varepsilon$ can be analytically
interpolated by an autonomous analytic flow and establish
properties for the corresponding energy-time coordinates.
Note that this property does not imply integrability of $f_\varepsilon$
since the domain in question is not invariant.
Several sections are included to provide details on approximation of $\psi^-(t)$
on various subsets of the complex plane.

\begin{figure}
\begin{center}
{\includegraphics[width=0.90\textheight,height=0.98\textwidth,angle=90]{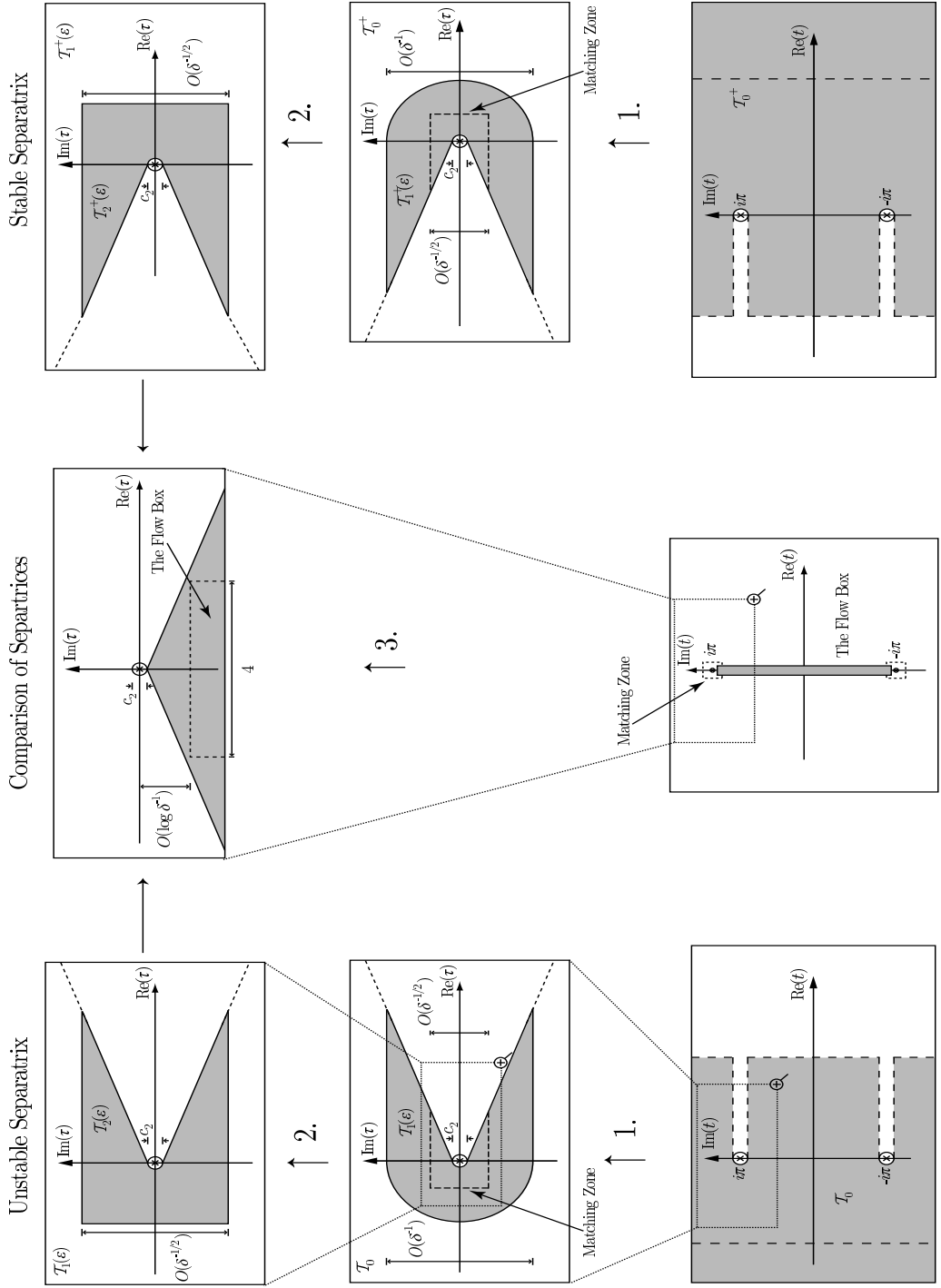}}
\end{center}
\caption{Overview of the proof: Domains of approximation}
\end{figure}

\section{Overview of the proof\label{Se:overview}}

In this section we will describe the main steps of the proof
postponing proofs of technical statements to subsequent sections.
Before proceeding to the proof we need to introduce some definitions.

In this paper we will use power series in $x,y$ and $\varepsilon$. 
The arguments are easier when terms of a similar magnitude are grouped
together. We consider $x$ to be of order 2, $y$ to be of
order 3 and $\varepsilon$ of order $4$. We say that a polynomial $Q_p$  
is quasi-homogeneous of order $p$ if it satisfies the identity
\begin{equation}\label{Eq:quasihomo}
Q_p(\lambda ^{2}x,\lambda ^{3}y,\lambda ^{4}\varepsilon )=\lambda
^{p}Q_p(x,y,\varepsilon )\,.
\end{equation}
Any power series can be represented as a sum of quasi-homogeneous polynomials.

In order to shorten the notation, we define
\begin{equation*}
\hat O_n:=O\left(\left(\max\{\,|x|^{1/2},|y|^{1/3},\varepsilon^{1/4}\,\}\right)^n\right)
\end{equation*}
and write $\hat O_{n,n+1}$ for a vector with components $(\hat O_{n},\hat O_{n+1})$.
It is easy to see
%
%
that $Q_p=\hat O_p$.

\subsection{Standard scaling and limit flow}

The change of the variables
\begin{equation}\label{Eq:standardscaling}
x=\delta ^{2}X,\quad y=\delta ^{3}Y,\quad \varepsilon =\delta ^{4}
\end{equation}
is called {\em the standard scaling}. In the new variables
the map $f_{\varepsilon }$ takes the form of a close to identity map
\begin{equation}\label{Eq:scaledmap}
F_{\delta }=\mathrm{id}+\delta G_{\delta }.
\end{equation}
We note that $G_{0}$ is called {\em the limit flow}. 
Its time-$\delta$ map $\Phi_{G_0}^\delta=\mathrm{id}+\delta G_0+O(\delta^2)$
and therefore is $O(\delta^2)$ close to $F_\delta$.

Taking into account that $f_0(0)=0$ and the equality (\ref{Eq:Df0})
we obtain
$$
G_0=\left(\begin{array}{c} Y\\-aX^2+b\end{array}\right)
$$
for some constants $a,b$. The same constants enter 
the non-degeneracy assumption (\ref{Eq:nondegeneracy}).
Indeed,  the limit flow is Hamiltonian with the Hamilton function
$$
H_{0}=\frac{Y^{2}}{2}+\frac{aX^{3}}{3}-bX
$$
which coincides, after reverting the standard scaling and scaling time, 
with (\ref{Eq:h6non}).
The dynamics of the limit flow is defined by the system of Hamiltonian equations:
\[
\dot X=Y,\qquad \dot Y=-a X^2+b.
\]
The limit flow has a saddle equilibrium at $(-\sqrt{\frac ba},0)$.
At this point the linearised vector-field has a positive eigenvalue 
$\mu_0=\sqrt[4]{4ab}$. 
The separatrix of the limit flow can be found explicitly by integrating the
Hamiltonian equations:
\[
X_0(t)=-\sqrt{\frac{b}{a}}+\frac{3\sqrt{ab}}{ \cosh^2\frac{\mu_0 t}{2}},
\qquad
Y_0(t)= -3\sqrt{ab}\mu_0\frac{\sinh\frac{\mu_0 t}{2}}{ \cosh^3\frac{\mu_0 t}{2}}\,.
\]
A substitution shows that these functions satisfy the Hamiltonian system.
Obviously they converge to the saddle equilibrium as $t\to\pm\infty$.

It is convenient to scale time and define the separatrix solution
of the limit flow by
\begin{equation}
\label{Eq:phi0}
\varphi_0(t)=(X_0,Y_0)(t/\mu_0)\,.
\end{equation}
This function satisfies the differential equation
\[
\dot \varphi_0=\mu_0^{-1}\mathrm{J}\nabla H_0(\varphi_0)\,.
\]
We will consider the separatrix solution for complex values of $t$.
We note that $\varphi_0$ is analytic for all complex $t$ except for 
second order poles at $i\pi (2k+1)$ with $k\in\mathbb Z$.
In particular this solution is analytic in the strip $|\Im(t)|<\pi$
and has poles at the points $t_*=\pm\pi i$ on its boundary.
These singularities play the central role in our analysis.

\subsection{Formal interpolation and Formal separatrix\label{Se:fs}}

We use $h_\varepsilon^n=\sum_{p= 6}^{n+5}h_{p}(x,y,\varepsilon )$
to denote a partial sum of a formal series 
\begin{equation}\label{Eq:formalHam}
h_{\varepsilon }=\sum_{p\geq 6}h_{p}(x,y,\varepsilon )
\end{equation}
where $h_{p}(x,y,\varepsilon)$ are quasi-homogeneous polynomials of order $p$.
In Section~\ref{Se:formalinterpol} we will prove that 
there exists a unique formal series such that for each $n\in\mathbb N$
\begin{equation}\label{Eq:errorfi}
f_{\varepsilon }=\Phi _{h_{\varepsilon }^n}^{1}+\hat O_{n+3,n+4}\,,
\end{equation}
where $\Phi _{h_{\varepsilon }^n}^{1}$ is the
time one map with hamiltonian $h_{\varepsilon }^n$. 
The leading order of the series is given by (\ref{Eq:h6non}).
It is a quasi-homogeneous polynomial of order~$6$ and coincides,
up to scaling of the space and time variables, with the Hamiltonian
of the limit flow.

We say that the series $h_\varepsilon$ {\em formally interpolates\/} the map $f_\varepsilon$.
We should not expect the series $h_\varepsilon$ to converge.

\begin{theorem}[Formal separatrix]\label{Thm:fs}
If $h_\varepsilon$ is a formal Hamiltonian of the form\/ {\rm (\ref{Eq:formalHam})}
and its leading order satisfies the non-degeneracy condition {\rm (\ref{Eq:nondegeneracy})},
then there is a unique formal series 
\[
\mu(\delta)=\sum_{k\ge0}\mu_k\delta^{2k+1}
\]
such that the equation
\begin{equation}\label{Eq:mainforma}
\mu(\delta)\dot {\mathbf{X}}=\mathrm{J}\nabla h_\varepsilon(\mathbf X)
\end{equation}
has a formal solution of the form
\begin{equation}\label{Sum_FormalSep}
\mathbf{X}(t,\varepsilon )=\left( 
\begin{array}{c}
\sum\limits_{p\geq 1}\delta ^{2p}X_{p}^{1}+\dot \eta _{0}\sum\limits_{p\geq
1}\delta ^{2p+1}X_{p-1}^{2}\bigskip  \\ 
\dot\eta _{0}\sum\limits_{p\geq 1}\delta
^{2p+1}Y_{p-1}^{2}+\sum\limits_{p\geq 2}\delta ^{2p}Y_{p}^{1}%
\end{array}%
\right)  
\end{equation}
where $X_{p}^{1},Y_{p}^{1},X_{p}^{2},Y_{p}^{2}$ are polynomials of order $p$
in $\eta _{0}=\cosh^{-2}\frac t2$.
\end{theorem}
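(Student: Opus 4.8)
The plan is to solve the invariance equation~(\ref{Eq:mainforma}) order by order in $\delta$, using the known separatrix $\varphi_0$ of the limit flow as the leading term and setting up a linearised recursion whose inhomogeneities are polynomials in $\eta_0=\cosh^{-2}\frac t2$. First I would rescale and rewrite~(\ref{Eq:mainforma}) so that at order $\delta^0$ (after dividing through appropriately) one recovers exactly the differential equation $\dot\varphi_0=\mu_0^{-1}\mathrm J\nabla H_0(\varphi_0)$ satisfied by the limit-flow separatrix~(\ref{Eq:phi0}); this fixes $\mu_0=\sqrt[4]{4ab}$ and $\mathbf X^{(0)}=\varphi_0$. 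Substituting the ansatz $\mathbf X=\varphi_0+\sum_{k\ge1}\delta^k\mathbf X^{(k)}$ and $\mu=\sum_{k\ge0}\mu_k\delta^{2k+1}$ into~(\ref{Eq:mainforma}) and collecting the coefficient of $\delta^k$ yields, for each $k\ge1$, a linear inhomogeneous ODE of the form
\[
\mu_0\,\dot{\mathbf X}^{(k)}-\mathrm J\,D^2 h_6(\varphi_0)\,\mathbf X^{(k)}
= \mathbf R_k,
\]
where $\mathbf R_k$ is an explicit polynomial expression in $\varphi_0$, $\dot\varphi_0$, the lower-order $\mathbf X^{(j)}$ ($j<k$), the lower-order $\mu_j$, and the Taylor coefficients of the quasi-homogeneous pieces $h_p$ with $p>6$; the unknown $\mu_{(k-1)/2}$ (present only when $k$ is odd) enters $\mathbf R_k$ linearly through the term $\mu_{(k-1)/2}\dot\varphi_0$.

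The core of the argument is the structure of the linear operator $\mathcal L=\mu_0\frac{d}{dt}-\mathrm J\,D^2h_6(\varphi_0)$. Since $\mathcal L$ is the variational equation of the limit flow along its separatrix, one solution of the homogeneous equation is $\dot\varphi_0$ (differentiate $\dot\varphi_0=\mu_0^{-1}\mathrm J\nabla H_0(\varphi_0)$ in $t$), and a second, linearly independent solution $\xi_1$ is obtained from $\partial_c$ of the energy-level solutions (or by reduction of order / the constant-of-motion $H_0$); the Wronskian is constant because the system is area-preserving. I would then establish the key algebraic closure property: if one works in the ring generated by $\eta_0$ and $\dot\eta_0$ subject to $\dot\eta_0^2 = \eta_0^2-\eta_0^3$ (which follows from $\eta_0=\cosh^{-2}\tfrac t2$) and $\ddot\eta_0 = \eta_0 - \tfrac32\eta_0^2$, then $\varphi_0$, $\dot\varphi_0$ and all the $h_p(\varphi_0)$ lie in this ring with controlled degree, and the operation ``solve $\mathcal L\mathbf X^{(k)}=\mathbf R_k$ by variation of parameters, then integrate'' maps polynomials in $\eta_0$ (times $1$ or $\dot\eta_0$) of degree $d$ to polynomials of degree $d+O(1)$ of the same parity type. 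This is what produces the two-block structure in~(\ref{Sum_FormalSep}): even powers of $\delta$ carry the ``$\dot\eta_0$-free'' even part, odd powers of $\delta$ carry the factor $\dot\eta_0$ times a polynomial in $\eta_0$. Tracking the quasi-homogeneity weights (recall $x\sim2$, $y\sim3$, $\varepsilon\sim4$, and under the standard scaling $\delta$ is the unit) pins down exactly which polynomial order in $\eta_0$ appears at each power of $\delta$, giving the claimed exponents $X_p^1,Y_p^1$ of order $p$ and $X_{p-1}^2,Y_{p-1}^2$ of order $p-1$.

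The remaining point is uniqueness, and this is where the free constant $\mu_k$ and the translation freedom $t\mapsto t+t_0$ must be used carefully. At each odd order the general solution of $\mathcal L\mathbf X^{(k)}=\mathbf R_k$ contains a term proportional to the homogeneous solution $\dot\varphi_0$; adding such a term corresponds precisely to the time-translation ambiguity, so I would normalise it away by a parity/symmetry condition (for instance requiring $\mathbf X^{(k)}$ to have the definite parity in $t$ dictated by the ansatz, equivalently demanding the solution be a genuine polynomial in $\eta_0,\dot\eta_0$ with no secular $t\cdot(\dots)$ or $\log\cosh$ terms). The genuinely delicate step — and the one I expect to be the main obstacle — is showing that the particular solution produced by variation of parameters has \emph{no logarithmic or secular terms}, i.e. that the relevant integrals of $\mathbf R_k$ against the homogeneous solutions always land back in the polynomial ring: this is a solvability/resonance condition, and it is exactly here that the precise coefficient $\mu_k$ must be chosen at each odd step to kill the resonant component (the component of $\mathbf R_k$ along $\dot\varphi_0$ in the appropriate pairing). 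I would prove this by induction on $k$, verifying the resonance cancellation using the explicit form of $\varphi_0$ and the self-adjointness of $\mathcal L$ with respect to the symplectic pairing; granting that, existence and uniqueness of both $\mu(\delta)$ and the formal series~(\ref{Sum_FormalSep}) follow simultaneously from the recursion.
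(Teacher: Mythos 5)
Your plan is conceptually sound, but it is a genuinely different route from the paper's, and the difference matters for how hard the key step is.

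The paper does not work with the full system $\mu\dot{\mathbf X}=\mathrm J\nabla h_\varepsilon(\mathbf X)$ directly. Instead it first applies the normal-form result (Theorem~\ref{Thm:parabolic_simpl}) to reduce $h_\varepsilon$ to the mechanical form $\frac{y^2}{2}+U(x,\varepsilon)$. This collapses the problem to a single scalar second-order equation $\mu^2\ddot x=-U'(x,\varepsilon)$. The crucial move is then to multiply by $\dot x$ and integrate once, yielding the first integral $\beta(\varepsilon)\dot x^2=-U(x,\varepsilon)+c(\varepsilon)$ with $\beta=\frac12\mu^2$. Substituting $x=\sum\delta^{2k}x_k$, $\beta=\sum a_k\delta^{2k}$, $c=\sum c_k\delta^{2k}$ and using the identity $\dot\eta_0^2=\eta_0^2-\eta_0^3$ turns each order in $\delta^2$ into a \emph{purely algebraic} equation: a polynomial identity in $\eta_0$. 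No integration is performed at any order; the unknowns $c_n$, $b_{n0},\dots,b_{nn}$, and $a_n$ are read off coefficient by coefficient in $\eta_0$, and the only solvability condition that arises is the nonvanishing of a fixed $2\times 2$ determinant ($Ab_1^2=-81u_{30}b_0^4\ne0$), which simultaneously pins down $a_n$ (hence $\mu$) and $b_{n1}$. The $y$-component and the general $\mathbf X$ in~(\ref{Sum_FormalSep}) are then recovered from $y=\mu\dot x$ and from undoing the normal-form substitution; the parity split into $\eta_0$-polynomials and $\dot\eta_0\times(\text{$\eta_0$-polynomials})$ comes out of this bookkeeping automatically.

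Your approach instead linearises the full two-dimensional system about $\varphi_0$ and solves $\mathcal L\mathbf X^{(k)}=\mathbf R_k$ by variation of parameters, choosing $\mu_k$ to kill the resonant component along $\dot\varphi_0$. This is a legitimate strategy, and it is conceptually illuminating because it makes the role of $\mu_k$ transparent: it is exactly a solvability/Fredholm condition. But it trades the paper's finite linear algebra for an analytic verification that the variation-of-parameters integrals (involving the growing homogeneous solution $\xi_1$) land back in the polynomial ring in $\eta_0,\dot\eta_0$ with no secular or logarithmic contributions. You correctly flag this as the delicate step, but you only sketch it; it is precisely the part of the argument that the paper sidesteps by passing to the first integral, where the recursion never produces an integral in the first place. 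You also omit the preliminary reduction to $\frac{y^2}{2}+U(x,\varepsilon)$, which in the paper is what makes the scalar/first-integral shortcut available; without it you are committed to the $2\times 2$ variational system, which is workable but noticeably heavier. So: not a wrong approach, but a harder road, and the resonance-cancellation lemma you defer is the genuine content you would still need to supply.
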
 
The proof of this theorem is placed in Section~\ref{Se:FS}.
We remind that $\delta=\varepsilon^{1/4}$.

%

We say that $\mathbf{X}$ is a formal separatrix.
The series $h_\varepsilon$ may diverge and consequently
there is no reason to expect that the formal separatrix converges.
On the other hand we will see that its partial sums provide 
a rather accurate approximation for the separatrices of the map $f_\varepsilon$.

Let us consider the domain
\begin{equation}
\mathcal{T}_{0}=\left\{\; t\in \mathbb{C} \;:\;
\Re(t)\leq r_1
\ \mbox{ and }
\ \varphi _{0}(t-s)\in \mathcal{D},\ \forall s\geq 0
\;\right\} 
\end{equation}
where $r_1>0$ is an arbitrary constant and $\mathcal D$ is a bounded
domain. Later we will assume it to be a sufficiently large ball centred around the origin. 
Of course, some constants in future estimates will depend on the choice of~$r_1$
and $\mathcal D$.

\begin{lemma}\label{Le:sepint}
For any $n\in\mathbb N$, the Hamiltonian $h_{\delta }^{n}$
has a saddle equilibrium with a Lyapunov exponent $\mu _{n,\delta }>0$
in a neighbourhood of the origin.
Moreover, there exists a separatrix solution $\varphi _{\delta}^{n}$ 
of the equation
\begin{equation}\label{Eq:sepintflow}
\dot{\varphi}_\delta^{n}=\mu _{n,\delta }^{-1}
\left. \mathrm{J}\nabla h_{\delta }^{n}\right\vert _{\varphi _{\delta }^{n}}
\end{equation}
such that uniformly for $t\in\mathcal{T}_{0}$
\begin{equation*}
\varphi _{\delta }^{n}(t)=\mathbf{X}^n(t,\delta)+O(\delta ^{n})\,,
\end{equation*}
where $\mathbf{X}^n$ denotes the sum of the first $n$ orders in $\delta$ 
of the formal series {\rm (\ref{Sum_FormalSep})}.
\end{lemma}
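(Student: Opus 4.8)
The plan is to establish the lemma in two stages: first, locating the saddle equilibrium of $h_\delta^n$ and computing its Lyapunov exponent; second, constructing the separatrix solution $\varphi_\delta^n$ by a fixed-point argument that is naturally set up around the formal separatrix $\mathbf X$ of Theorem~\ref{Thm:fs}. For the first stage, recall that $h_\delta^n$ is a polynomial whose leading quasi-homogeneous part is $h_6=\frac{y^2}{2}+\frac{ax^3}{3}-bx\varepsilon$; after the standard scaling~(\ref{Eq:standardscaling}) this becomes $\delta^6H_0$ with $H_0$ the Hamiltonian of the limit flow, which has a nondegenerate saddle at $(-\sqrt{b/a},0)$ with positive eigenvalue $\mu_0=\sqrt[4]{4ab}$. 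The higher-order terms $h_p$, $p>6$, contribute corrections that are higher order in $\delta$ after scaling, so the implicit function theorem gives a unique equilibrium $\mathbf x^*_{n,\delta}$ depending analytically on $\delta$, with the Jacobian of the Hamiltonian vector field having eigenvalues $\pm\mu_{n,\delta}$ where $\mu_{n,\delta}=\mu_0\delta+O(\delta^3)$ is a polynomial in $\delta$ (this is consistent with~(\ref{Eq:lambda}) since $\log\lambda_\delta\approx\mu_{\infty,\delta}$). Hyperbolicity of this equilibrium for all sufficiently small $\delta$ is immediate from continuity.

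For the second stage I would write the sought solution as $\varphi_\delta^n=\mathbf X^n(t,\delta)+\xi(t)$ and derive from~(\ref{Eq:sepintflow}) and the fact that $\mathbf X^n$ solves~(\ref{Eq:mainforma}) up to order $O(\delta^n)$ (here one uses Theorem~\ref{Thm:fs}: the partial sum $\mathbf X^n$ satisfies the ODE with an error that is $O(\delta^n)$ on $\mathcal T_0$, since the next omitted term carries a factor $\delta^n$, and $\mathcal T_0$ is chosen so that $\varphi_0$ and hence the polynomials in $\eta_0=\cosh^{-2}(t/2)$ stay bounded) an equation of the schematic form
\[
\dot\xi = A_\delta(t)\,\xi + R_n(t,\delta) + \mathcal N(t,\xi),
\]
where $A_\delta(t)=\mu_{n,\delta}^{-1}\mathrm J\,D^2h_\delta^n(\mathbf X^n(t,\delta))$ is the linearization along the approximate separatrix, $R_n=O(\delta^n)$ uniformly on $\mathcal T_0$, and $\mathcal N$ collects the terms quadratic and higher in $\xi$. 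The key structural point is that as $\Re(t)\to-\infty$ the approximate solution $\mathbf X^n$ tends to the saddle $\mathbf x^*_{n,\delta}$ and $A_\delta(t)$ tends to the constant hyperbolic matrix with eigenvalues $\pm1$ (after the $\mu_{n,\delta}^{-1}$ normalization), so the homogeneous linear system admits a fundamental solution with a well-controlled dichotomy. I would then invert the linear part along the stable/unstable directions using a variation-of-constants integral operator over $\mathcal T_0$ that selects the solution decaying at $\Re(t)\to-\infty$, and show this operator together with the contribution of $R_n$ and $\mathcal N$ is a contraction on a small ball in the sup-norm over $\mathcal T_0$; the fixed point is $\xi$, with $\|\xi\|_{\mathcal T_0}=O(\delta^n)$, which is exactly the asserted estimate. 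Uniqueness of $\varphi_\delta^n$ follows from uniqueness of the contraction's fixed point together with the requirement that it limit to the equilibrium.

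The main obstacle is controlling the Green's operator for the variational equation uniformly in $\delta$ and, simultaneously, uniformly over the unbounded complex domain $\mathcal T_0$. Two issues compound: (i) the natural time scale of the hyperbolic dynamics is $\mu_{n,\delta}\sim\delta$, so the "hyperbolicity per unit $t$" is weak — one must check that the scaled equation~(\ref{Eq:sepintflow}) (with the $\mu_{n,\delta}^{-1}$ factor) has $O(1)$ dichotomy exponents, which it does precisely because of the rescaling, but the nonlinear and remainder terms must be tracked in the same units; (ii) $\mathcal T_0$ reaches up toward $\Im(t)=\pm\pi$ where $\varphi_0$ and hence $\mathbf X^n$ develop poles, so the approximate separatrix is large near the boundary of $\mathcal T_0$ and the naive sup-norm estimates degrade — here one exploits that $\mathcal T_0$ is defined by the orbit-invariance condition $\varphi_0(t-s)\in\mathcal D$ with $\mathcal D$ \emph{bounded}, so in fact $\mathbf X^n$ stays bounded on $\mathcal T_0$ and the pole is excluded from this particular domain (the delicate analysis near $\pm i\pi$ is deferred to later sections). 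Once the contraction estimate is set up in the right weighted norm adapted to the dichotomy, the remaining work is routine bookkeeping of quasi-homogeneous orders.
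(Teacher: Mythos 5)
The paper's own ``proof'' of Lemma~\ref{Le:sepint} is a single sentence: it is a statement about polynomial vector fields, and after the standard scaling~(\ref{Eq:standardscaling}) the saddle of the scaled system is non-degenerate, so everything reduces to classical ODE theory. Your proposal fleshes this out along the natural lines, and your two-stage structure (locate the equilibrium and its exponent by the implicit function theorem, then build the separatrix by a fixed-point argument anchored to the formal series) is consistent with what the paper regards as routine. In that sense you have not taken a genuinely different route; you have supplied the detail the paper declined to.

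That said, two remarks on where your sketch is a little optimistic. First, setting up a single variation-of-constants operator and a single contraction bound that works uniformly over all of $\mathcal{T}_0$ conflates two regimes that the paper deliberately keeps separate in the analogous argument for the map: in Section~\ref{Se:prooflocalsep} the existence is proved only on a left half-plane $\Re t < r_0$, where the coefficient matrix $A_\delta(t)$ is uniformly close to its constant hyperbolic limit and the dichotomy integral can be controlled with exponential weights, and then Section~\ref{Se:extension} uses a Gronwall-type extension lemma to carry the estimate across the remaining bounded portion of $\mathcal{T}_0$, where $\mathbf X^n(t,\delta)$ is far from the equilibrium and the linearization is not uniformly hyperbolic. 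A ``Green's operator on all of $\mathcal T_0$'' is not obviously bounded there, and if you insist on one operator you will need a carefully chosen splitting of the two spectral projections between forward and backward integration; it is cleaner to mimic the local-plus-extension structure. Second, your closing uniqueness claim overshoots: $\varphi_\delta^n$ is a separatrix solution and is only determined up to the time translation $t\mapsto t+t_0$; the decay condition at $\Re t\to-\infty$ does not break that symmetry. The lemma as stated only asserts existence, so this is harmless, but it is worth not claiming more than the fixed-point argument actually delivers once the normalization implicit in $\mathbf X^n$ has been imposed.
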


\begin{proof} The proof is straightforward: this lemma is a statement 
about polynomial vector fields and after the standard scaling
the saddle of the equation is non-degenerate.
\end{proof}

Let $\mathbf{X}$ be the formal separatrix given by $\left( \ref%
{Sum_FormalSep}\right) $. We will study it close to the singularity by
substituting $t=i\pi +\tau \log \lambda_\delta $ and expanding $X_{k}^{1,2},Y_{k}^{1,2}$ into
their Laurent series. We also re-expand $\log \lambda_\delta =\mu(\delta)$.
The result is a formal series of the form
\begin{equation}\label{Eq:formal_laurent}
\mathbf{X}=\left( \sum_{m\geq 0}\delta ^{2m}\tau ^{2m-2}\sum_{k\geq
0}x_{mk}\tau ^{-k},\sum_{m\geq 0}\delta ^{2m}\tau ^{2m-3}\sum_{k\geq
0}y_{mk}\tau ^{-k}\right),  
\end{equation}%
where $x_{mk}$ and $y_{mk}$ are real coefficients. See Section~\ref{Se:laurentexp} 
for a derivation of (\ref{Eq:formal_laurent}). It is convenient
to introduce formal power series in $\tau$ by setting
\begin{equation}\label{Eq:psim_hat}
\hat\psi_m=\left( \tau ^{2m-2}\sum_{k\geq 0}x_{mk}\tau ^{-k},
\tau ^{2m-3}\sum_{k\geq 0}y_{mk}\tau ^{-k}\right)
\end{equation}
for $m\ge0$. Then
\begin{equation}\label{Sum_AssymptoticBC}
\mathbf{X}=\sum_{m\geq 0}\delta ^{2m}\hat \psi_m(\tau)\,.
\end{equation}
We will use these formal series in the complex matching
procedure described in Section~\ref{Se:complmatch}.

\subsection{Separatrices for close-to-identity maps\label{Se:approx1}}

Let us consider a family of close to identity maps (\ref{Eq:scaledmap}).
For the purpose of this section it is not necessary to assume that the map
is obtained as a result of the standard scaling.

The implicit function theorem implies that if the limit flow $G_{0}$ 
has a non-degenerate equilibrium then $F_{\delta }$ has an
analytic family of fixed points which tend to the equilibrium when $\delta
\rightarrow 0$.
If the equilibrium is a saddle then the fixed point of $F_\delta$ is also a saddle
(see Section~\ref{Se:fpm} for formal statements and proofs).

The separatrix of $F_\delta$ is close to the separatrix of the limit flow (see \cite{FS1990}).
We will use a more accurate approximation provided by a separatrix of an (approximately) 
interpolating flow. Let us state it more formally.

Let $\mathcal D\subset\mathbb C^2$ be a bounded domain
and assume the origin is inside $\mathcal D$.

\begin{theorem}[Approximation theorem]\label{Thm_LAT}
Let $F_{\delta }$ be an analytic family of area preserving maps 
such that the limit flow has a saddle equilibrium at the origin.
Let $H_{\delta }^{n}$ be a Hamiltonian function such
that 
\begin{equation}\label{Eq:closenessFPhi}
F_{\delta }=\Phi _{H_{\delta }^{n}}^{\delta}+O(\delta ^{n+1})
\end{equation}
on $\mathcal D$ and $\phi _{\delta }^{n}$ be a separatrix solution
of the equation
\[
\dot \phi _{\delta }^{n}=\mu_{n,\delta}^{-1}\mathrm{J}\nabla H_\delta^n
\]
such that
\[
\phi _{\delta }^{n}(t)=\varphi_0(t)+O(\delta)
\]
for all $t\in\mathcal T_0$.
Then there exists a parametrisation $\Psi^{-}$ of the local unstable 
separatrix  of the map $F_\delta$ which satisfies the equation
\begin{equation}
\Psi ^{-}(t+\log \lambda _{\delta })=F_{\delta }(\Psi ^{-}(t))
\end{equation} 
and 
\begin{equation}\label{Eq:nonsingestimate}
\Psi ^{-}(t)=\phi _{\delta }^{n}(t)+O(\delta ^{n})
\end{equation} 
uniformly on the set $\mathcal T_0$.
\end{theorem}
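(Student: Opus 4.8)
The plan is to set up a contraction mapping in a space of analytic functions on $\mathcal{T}_0$ whose fixed point is the desired parametrisation $\Psi^-$, using $\phi_\delta^n$ as the initial approximation. First I would fix the Lyapunov exponent: since $F_\delta$ has a saddle fixed point $\mathbf{p}_\delta$ (by the implicit function theorem argument referenced in Section~\ref{Se:fpm}) with largest multiplier $\lambda_\delta$, and since $F_\delta = \Phi_{H_\delta^n}^\delta + O(\delta^{n+1})$, one checks that $\log\lambda_\delta = \mu_{n,\delta}\,\delta + O(\delta^{n+1})$; I would record this comparison since both the map equation and the flow equation advance $t$ by the relevant amount and the two step-sizes must be reconciled. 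Write $\Psi^- = \phi_\delta^n + \xi$ and substitute into $\Psi^-(t+\log\lambda_\delta) = F_\delta(\Psi^-(t))$. Using $F_\delta = \Phi_{H_\delta^n}^\delta + O(\delta^{n+1})$ together with the fact that $\phi_\delta^n$ exactly solves its flow equation (so $\phi_\delta^n(t+\mu_{n,\delta}\delta) = \Phi_{H_\delta^n}^\delta(\phi_\delta^n(t)) + O(\delta^2\cdot\text{(derivative bounds)})$ — here one must be a little careful and rather compare the time-$\log\lambda_\delta$ flow map with the shift by $\log\lambda_\delta$ along $\phi_\delta^n$), the equation for $\xi$ becomes
\[
\xi(t+\log\lambda_\delta) = DF_\delta(\phi_\delta^n(t))\,\xi(t) + \mathcal{R}(t) + \mathcal{N}(t,\xi),
\]
where $\mathcal{R} = O(\delta^{n+1})$ is the residual from the imperfect interpolation (integrated along the orbit), and $\mathcal{N}$ collects the quadratic-and-higher terms in $\xi$.

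The next step is to invert the linearised operator $\xi \mapsto \xi(\cdot+\log\lambda_\delta) - DF_\delta(\phi_\delta^n(\cdot))\,\xi$. On $\mathcal{T}_0$, as $\Re(t)\to-\infty$ the orbit $\phi_\delta^n(t)$ approaches the saddle, where $DF_\delta$ has eigenvalues $\lambda_\delta^{\pm1}$; the relevant direction (tangent to the unstable manifold) is the expanding one. The standard device — used in \cite{G99}, and the backbone of Lazutkin's programme — is to solve the difference equation by a telescoping sum running backwards along the orbit: one writes $\xi(t) = \sum_{k\ge 0} \big(\prod_{j=1}^{k} DF_\delta(\phi_\delta^n(t-j\log\lambda_\delta))\big)^{-1}\big(\text{data at } t-k\log\lambda_\delta\big)$, projected onto the unstable direction, and correspondingly for the stable (contracting) component summing forward; convergence is controlled by the spectral gap, i.e. by $\log\lambda_\delta \asymp \mu_0\delta > 0$. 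This gives a bounded right-inverse $\mathcal{L}^{-1}$ on a suitable Banach space of analytic functions on $\mathcal{T}_0$ with a weight that decays like $e^{\mu_0 \Re(t)}$ (matching the behaviour near the fixed point) and is $O(1)$ away from it. The norm of $\mathcal{L}^{-1}$ will blow up like a negative power of $\delta$ (because the gap is only $O(\delta)$), so I would track that power carefully: if $\|\mathcal{L}^{-1}\| = O(\delta^{-c})$ then $\|\mathcal{L}^{-1}\mathcal{R}\| = O(\delta^{n+1-c})$, and since $n$ is arbitrary this still yields $O(\delta^n)$ after renaming $n$. Then the fixed-point equation $\xi = \mathcal{L}^{-1}(\mathcal{R} + \mathcal{N}(\cdot,\xi))$ is a contraction on a ball of radius $O(\delta^n)$ for $\delta$ small, because $\mathcal{N}$ is at least quadratic and the ball is small.

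The output of the contraction is an analytic $\xi$ on $\mathcal{T}_0$ with $\xi = O(\delta^n)$, hence $\Psi^- = \phi_\delta^n + O(\delta^n)$ uniformly on $\mathcal{T}_0$, and by construction $\Psi^-(t+\log\lambda_\delta) = F_\delta(\Psi^-(t))$. That $\Psi^-$ parametrises the \emph{local unstable} separatrix follows from the limit $\lim_{\Re(t)\to-\infty}\Psi^-(t) = \mathbf{p}_\delta$, which is forced by the weight in the Banach space together with $\phi_\delta^n(t)\to$ saddle; uniqueness of the local unstable manifold then identifies the image. I expect the main obstacle to be the bookkeeping in Step two: setting up the weighted Banach space on the awkward domain $\mathcal{T}_0$ so that (i) the right-inverse $\mathcal{L}^{-1}$ is bounded with an explicit (polynomial in $\delta^{-1}$) norm, (ii) the composition/nonlinear map $\mathcal{N}$ maps the space to itself with the expected quadratic estimate despite the shift $t\mapsto t+\log\lambda_\delta$ moving points within $\mathcal{T}_0$ (one needs $\mathcal{T}_0$, or a slight shrinking of it, to be essentially invariant under this shift, which is why $\mathcal{T}_0$ is defined via the orbit condition $\varphi_0(t-s)\in\mathcal{D}$ for all $s\ge 0$), and (iii) the comparison between the time-$\log\lambda_\delta$ map of the flow $\Phi_{H_\delta^n}$ and the exact shift along its own separatrix solution is handled cleanly, keeping the residual genuinely $O(\delta^{n+1})$ and not merely $O(\delta^2)$. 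Everything else — the implicit function theorem for $\mathbf{p}_\delta$, analyticity, the final error rate — is routine once that framework is in place.
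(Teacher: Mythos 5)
Your plan is in the same family as the paper's (construct $\Psi^-=\phi_\delta^n+\xi$ and solve a difference equation for $\xi$ by a fixed-point argument), but as written it has two genuine gaps, and both are exactly the places you flagged as worries. First, the paper does \emph{not} run a single contraction on all of $\mathcal{T}_0$. The contraction is performed only on a half-plane $\Re t<r_0$ near the fixed point, in the norm $\|\xi\|=\sup_{\Re t<r_0}|e^{-2t}\xi(t)|$; the crucial estimates $R_0(t)=O(\delta^{n+1}e^{2t})$ and $R_1(t)=O(\delta e^{t})$ come from the quadratic vanishing of $\tilde F_\delta-\tilde\Phi_\delta$ and the linear vanishing of $D\tilde F_\delta-\mathrm A_\delta$ at the fixed point, and they simply are not available once $\phi_\delta^n(t)$ leaves a small neighbourhood of the saddle. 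The extension to the rest of $\mathcal{T}_0$ is a separate, elementary step: a discrete Gronwall inequality (Lemma~\ref{LemmaExtension}) showing that two close-to-identity maps that agree to $O(\delta^{n+1})$ cannot drift apart by more than $O(\delta^n)$ over $O(\delta^{-1})$ iterates. Collapsing these two steps into one weighted contraction on the ``awkward'' domain is not a bookkeeping problem — it is the reason the estimates would fail. Second, your proposal to split the linearised operator and sum the stable component \emph{forward} in time is not appropriate here: forward iteration $t\mapsto t+k\log\lambda_\delta$ leaves the half-plane $\Re t<r_0$ immediately, and the boundary condition we are enforcing ($\xi\to0$ as $\Re t\to-\infty$) selects a solution that is obtained by summing \emph{both} components backward, i.e.\ $\xi(t)=\sum_{k\ge1}\mathrm A_\delta^{k-1}R(t-k\log\lambda_\delta,\xi(t-k\log\lambda_\delta))$. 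The unstable eigenvalue $\lambda_\delta>1$ is harmless because the data decays like $e^{2t}$, so $\lambda_\delta^{k-1}\cdot\lambda_\delta^{-2k}=\lambda_\delta^{-k-1}$ gives a geometric series; its sum is $O(\delta^{-1})$, which is precisely the one power of $\delta$ you lose (so $O(\delta^{n+1})\to O(\delta^n)$ with no ``renaming of $n$'' needed).

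Two smaller remarks. The weight you suggest, $e^{\mu_0\Re t}$, is linear; the paper's weight is quadratic ($e^{2\Re t}$), matching $R_0=O(\delta^{n+1}e^{2t})$. With a linear weight the series above need not converge in the required way and the sharp $O(\delta^n)$ cannot be extracted. Also, your point (iii) — reconciling the time-$\log\lambda_\delta$ shift with the time-$\delta$ flow map — is handled in the paper by the observation $\mu_{n,\delta}^{-1}\log\lambda_\delta=1+O(\delta^n)$, which rescales the Hamiltonian by a factor $1+O(\delta^n)$ and hence perturbs the time-one map by $O(\delta^{n+1})$; this keeps the residual at the advertised order. If you restructure your proof as ``contraction on a half-plane with the $e^{-2t}$ weight, summing backward, followed by a Gronwall extension lemma to cover $\mathcal{T}_0$,'' you recover the paper's argument.
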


The proof of this theorem consists of two steps. First the theorem
is proved for the local separatrix which corresponds to
a half-plane $\Re t<-r_0$ as described in Section~\ref{Se:prooflocalsep}.
Then an extension lemma (Lemma~\ref{LemmaExtension}) stated
and proved in Section~\ref{Se:extension} is applied.

For completeness, we also provide a proof of the existence 
for the Hamiltonian $H_\delta^n$ in Section~\ref{Se:formint}.

\subsection{Parametrisation the separatrices\label{Se:ext}}

We note that the existence of a parameterisation which satisfies equation
(\ref{Eq:mainfde}) follows from the general theory \cite{GL01}.
The parametrization is not unique and is defined up to a translation 
$t\mapsto t+t_0(\varepsilon)$.
The next theorem states that there is a parameterisation which
is close to the separatrix of the interpolating flow.

Let $\varphi _{\delta }^{n}$ be defined by Lemma~\ref{Le:sepint}. 

\begin{theorem}[Existence and Local approximation]\label{Thm:psim}
Equation (\ref{Eq:mainfde}) has a solution
\begin{equation}\label{Eq:psimappr}
\psi ^{-}(t)=\varphi _{\delta }^{n}(t)+O_{n+2,n+3}(\delta )
\end{equation} 
uniformly for $t\in\mathcal T_0$.
\end{theorem}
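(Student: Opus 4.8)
The plan is to combine the Approximation Theorem (Theorem~\ref{Thm_LAT}) with the formal interpolation results of the earlier sections. Recall that by the formal interpolation statement~(\ref{Eq:errorfi}) we have $f_\varepsilon=\Phi_{h_\varepsilon^n}^1+\hat O_{n+3,n+4}$, and after the standard scaling~(\ref{Eq:standardscaling}) this becomes a family of close-to-identity maps $F_\delta=\mathrm{id}+\delta G_\delta$ with $G_0$ the limit flow, which has a saddle equilibrium. The Hamiltonian $h_\delta^n$, once scaled, plays the role of $H_\delta^n$: the time-one map of $h_\delta^n$ corresponds, after reverting the scaling, to a time-$\delta$ map of a Hamiltonian $H_\delta^n$ satisfying~(\ref{Eq:closenessFPhi}) on a fixed bounded domain $\mathcal D$, with error of the right order because of the quasi-homogeneity bookkeeping ($x\sim2$, $y\sim3$, $\varepsilon\sim4$) that converts $\hat O_{n+3,n+4}$ into $O(\delta^{n+1})$ componentwise. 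Lemma~\ref{Le:sepint} supplies the separatrix solution $\varphi_\delta^n$ of~(\ref{Eq:sepintflow}) with $\varphi_\delta^n(t)=\mathbf X^n(t,\delta)+O(\delta^n)$ uniformly on $\mathcal T_0$, and in particular $\varphi_\delta^n(t)=\varphi_0(t)+O(\delta)$ since the leading term of the formal separatrix is $\varphi_0$. Thus all hypotheses of Theorem~\ref{Thm_LAT} are met.

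First I would carefully set up the dictionary between the scaled and unscaled pictures: verify that the standard scaling turns $f_\varepsilon$ into $F_\delta$ of the form~(\ref{Eq:scaledmap}), check that $h_\delta^n$ scales to an $H_\delta^n$ with $\Phi_{H_\delta^n}^\delta$ agreeing with $F_\delta$ up to $O(\delta^{n+1})$ on $\mathcal D$, and confirm that $\varphi_\delta^n$ from Lemma~\ref{Le:sepint} is exactly the separatrix solution whose leading order is $\varphi_0$. This is essentially bookkeeping with the quasi-homogeneous weights, but it must be done with care so that the error exponent matches. Second, I would apply Theorem~\ref{Thm_LAT} to obtain a parametrisation $\Psi^-$ of the local unstable separatrix of $F_\delta$ satisfying $\Psi^-(t+\log\lambda_\delta)=F_\delta(\Psi^-(t))$ and $\Psi^-(t)=\varphi_\delta^n(t)+O(\delta^n)$ uniformly on $\mathcal T_0$. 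Third, I would revert the standard scaling: setting $\psi^-(t)=(\delta^2,\delta^3)\cdot\Psi^-(t)$ (componentwise), the conjugacy equation becomes~(\ref{Eq:mainfde}), and the error $O(\delta^n)$ in each component picks up the scaling factor, turning into $\hat O_{n+2,n+3}(\delta)$, i.e. the first component is $O(\delta^{n+2})$ and the second is $O(\delta^{n+3})$, which is precisely the claimed~(\ref{Eq:psimappr}).

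The main obstacle I expect is the matching of error orders through the scaling, i.e. making sure that "$O(\delta^{n+1})$ in the hypothesis of Theorem~\ref{Thm_LAT}" is genuinely available from the formal interpolation error $\hat O_{n+3,n+4}$ after scaling, and conversely that the conclusion $O(\delta^n)$ after unscaling gives exactly $\hat O_{n+2,n+3}(\delta)$. Since $n$ is arbitrary, a clean way to handle this is to apply the Approximation Theorem with $n$ replaced by a sufficiently large $n'$ and then absorb the extra orders; the quasi-homogeneous degree of the first separatrix component near $\varphi_0$ contributes the shift by $2$ and the second by $3$, matching the notation $O_{n+2,n+3}(\delta)$. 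A secondary technical point is uniformity: Theorem~\ref{Thm_LAT} gives the estimate uniformly on $\mathcal T_0$, and one must check that the scaling does not distort the domain $\mathcal T_0$ (it is defined in the $t$-variable, which is untouched by the standard scaling, so this is immediate). Once these bookkeeping issues are handled, the proof is a direct invocation of the already-established Approximation Theorem, so the argument is short.

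\begin{proof}
Apply the standard scaling~(\ref{Eq:standardscaling}). By~(\ref{Eq:scaledmap}) the map $f_\varepsilon$ becomes a close-to-identity family $F_\delta=\mathrm{id}+\delta G_\delta$ whose limit flow $G_0$ has a saddle equilibrium at the point $(-\sqrt{b/a},0)$, which we may translate to the origin. Fix $n\in\mathbb N$ and let $n'$ be large (to be chosen). By the formal interpolation estimate~(\ref{Eq:errorfi}), $f_\varepsilon=\Phi_{h_\varepsilon^{n'}}^1+\hat O_{n'+3,n'+4}$; after the standard scaling the Hamiltonian $h_\delta^{n'}$ becomes, up to reverting the scaling and rescaling time, a Hamiltonian $H_\delta^{n'}$ for which $\Phi_{H_\delta^{n'}}^\delta$ agrees with $F_\delta$ up to an error of order $O(\delta^{n'+1})$ on a fixed bounded domain $\mathcal D\ni\mathbf 0$, because the weights $x\sim2$, $y\sim3$, $\varepsilon\sim4$ convert $\hat O_{n'+3}$ in the first component and $\hat O_{n'+4}$ in the second into a joint $O(\delta^{n'+1})$ in scaled variables. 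This is~(\ref{Eq:closenessFPhi}).

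By Lemma~\ref{Le:sepint}, the Hamiltonian $h_\delta^{n'}$ has a separatrix solution $\varphi_\delta^{n'}$ of~(\ref{Eq:sepintflow}) with $\varphi_\delta^{n'}(t)=\mathbf X^{n'}(t,\delta)+O(\delta^{n'})$ uniformly for $t\in\mathcal T_0$. In scaled variables this produces a separatrix solution $\phi_\delta^{n'}$ of $\dot\phi_\delta^{n'}=\mu_{n',\delta}^{-1}\mathrm J\nabla H_\delta^{n'}$, and since the leading order of the formal separatrix $\mathbf X$ is $\varphi_0$ we have $\phi_\delta^{n'}(t)=\varphi_0(t)+O(\delta)$ for all $t\in\mathcal T_0$. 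All hypotheses of Theorem~\ref{Thm_LAT} are therefore satisfied with $n$ there equal to $n'$.

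Theorem~\ref{Thm_LAT} yields a parametrisation $\Psi^-$ of the local unstable separatrix of $F_\delta$ with $\Psi^-(t+\log\lambda_\delta)=F_\delta(\Psi^-(t))$ and
\[
\Psi^-(t)=\phi_\delta^{n'}(t)+O(\delta^{n'})
\]
uniformly on $\mathcal T_0$. Reverting the standard scaling, set $\psi^-(t)=\bigl(\delta^2,\delta^3\bigr)\Psi^-(t)$ componentwise and $\varphi_\delta^n(t)$ correspondingly from $\phi_\delta^{n'}$ (truncating to order $n$ as needed, which only changes the result by higher-order terms in $\delta$). Then $\psi^-$ satisfies~(\ref{Eq:mainfde}), and the error $O(\delta^{n'})$ in each scaled component is multiplied by $\delta^2$ and $\delta^3$ respectively; choosing $n'=n$ gives
\[
\psi^-(t)=\varphi_\delta^n(t)+O_{n+2,n+3}(\delta)
\]
uniformly for $t\in\mathcal T_0$. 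Since $\mathcal T_0$ is defined purely in terms of the time variable $t$, which is unaffected by the spatial scaling, the uniformity is preserved. This is~(\ref{Eq:psimappr}).
\end{proof}
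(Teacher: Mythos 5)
Your proof is correct and follows essentially the same route as the paper: apply the standard scaling to obtain the close-to-identity map $F_\delta$ and the scaled interpolating Hamiltonian $H_\delta^n$, verify that~(\ref{Eq:errorfi}) becomes~(\ref{Eq:closenessFPhi}) under the quasi-homogeneous bookkeeping, invoke Theorem~\ref{Thm_LAT}, and revert the scaling to convert the $O(\delta^n)$ estimate into $O_{n+2,n+3}(\delta)$. The detour through $n'$ turns out to be unnecessary since $n'=n$ works directly, but this does not affect the correctness of the argument.
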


\begin{proof}
The theory of the previous section applies to the map $F_\delta$ obtained
after the standard scaling of the map $f_\varepsilon$. 
Let $\mathcal D$ be a large ball of a fixed radius. 
Since the map $f_\varepsilon$ is defined in an $\varepsilon$ independent
neighbourhood of the origin, the domain of the scaled map $F_\delta$ 
contains the ball $\mathcal D$ provided $\varepsilon$ is sufficiently
small.

The limit flow separatrix $\varphi_0$ is given explicitly by (\ref{Eq:phi0}) 
and has poles at $i\pi (2k+1)$ for each $k\in\mathbb Z$. 
Therefore the set $\mathcal T_0$ takes the shape
similar to the one shown on Figure~\ref{Fig:T0u}: it is a half plane
$\Re t<r_1$ without ``shadows" of small disks centred around
the singular points of $\varphi_0$.

\begin{figure}
\begin{center}
{\includegraphics[width=0.7\textwidth]{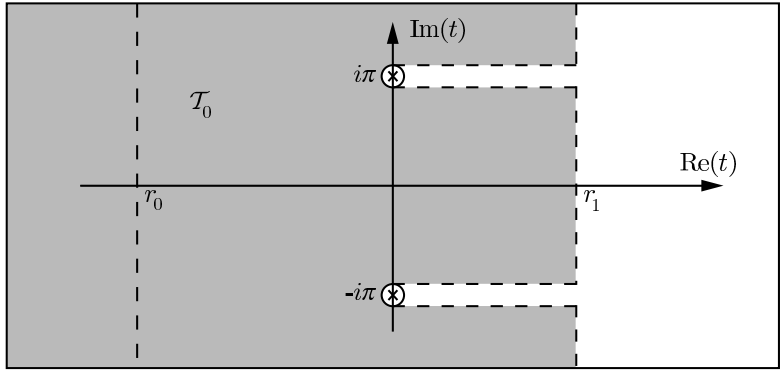}}
\end{center}
\caption{Domain $\mathcal T_0$.
\label{Fig:T0u}
}
\end{figure}

The interpolating Hamiltonian is obtained from (\ref{Eq:formalHam})
after the change of variables (\ref{Eq:standardscaling}).
Although the change of variables is not symplectic it has a constant
Jacobian, and therefore the Hamiltonian flow of $h_\delta^n$ is transformed into
a Hamiltonian flow with the Hamiltonian function
\begin{equation}\label{Eq:Hdnp}
H_{\delta }^{n}=\sum_{k=1}^{n}\delta ^{k}h_{5+k}(X,Y,1).
\end{equation}
Indeed, under the standard scaling the symplectic form becomes 
$dx\wedge dy=\delta ^{5}dX\wedge dY$, to work with the standard symplectic
form in the scaled variables we scale the Hamiltonian by $\delta ^{-5}$.
The upper bound (\ref{Eq:closenessFPhi}), which is necessary to apply
Theorems~\ref{Thm_LAT}, follows from (\ref{Eq:errorfi}) 
and~(\ref{Eq:standardscaling}).

To complete the proof we reverse the standard scaling which transforms the
error term $O(\delta^{n})$ of equation (\ref{Eq:nonsingestimate}) 
into $O_{n+2,n+3}(\delta )$.
\end{proof}

\subsection{Extension towards the singularity}

The theory presented in the previous subsection provides 
a rather accurate approximation for the invariant manifolds.
However, it is not sufficient to distinguish
between the stable and unstable separatrices of the map $F_\delta$.
Instead of further improving  the accuracy, we will study
the separatrices for values of $t$  closer to the singular 
points of the limit flow separatrix. In this region the splitting of the
separatrices is larger and consequently easier to detect.
This extension is sensitive to the form of the map
and cannot be performed with the same level of generality
as the estimates of Section~\ref{Se:approx1}.
So from now on we restrict our consideration to 
our original map $f_\varepsilon$. 

Let us fix two small positive constants $c_1$ and $\beta$,
and consider a domain $\mathcal{T}_{1}(\varepsilon )$
shown on Figure~\ref{Fig:T1u} (left), which also depends on a constant $c_2>0$.
The constant $c_2$ is to be chosen sufficiently large to ensure that
$\varphi_0$, the separatrix of the limit flow written in the unscaled
variables, does not leave a small $\varepsilon$-independent 
ball centred around the origin.

It is convenient to work with a time parameter centred at the
singularity and to scale the time step to one, hence we let 
\begin{equation}\label{Eq:tau}
\tau=\frac{t-i\pi}{\log \lambda _{\delta }}.
\end{equation}
The domain $\mathcal T_1(\varepsilon)$ expressed in the terms of $\tau$ 
is shown on Figure~\ref{Fig:T1u} (right).

\begin{figure}
\begin{center}
\includegraphics[width=0.45\textwidth]{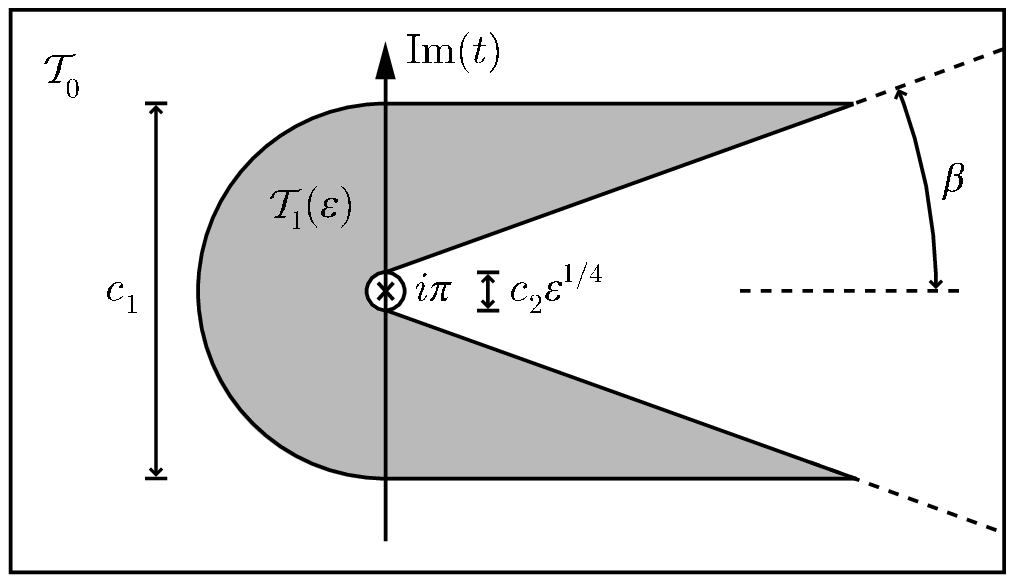}\kern0.08\textwidth
\includegraphics[width=0.45\textwidth]{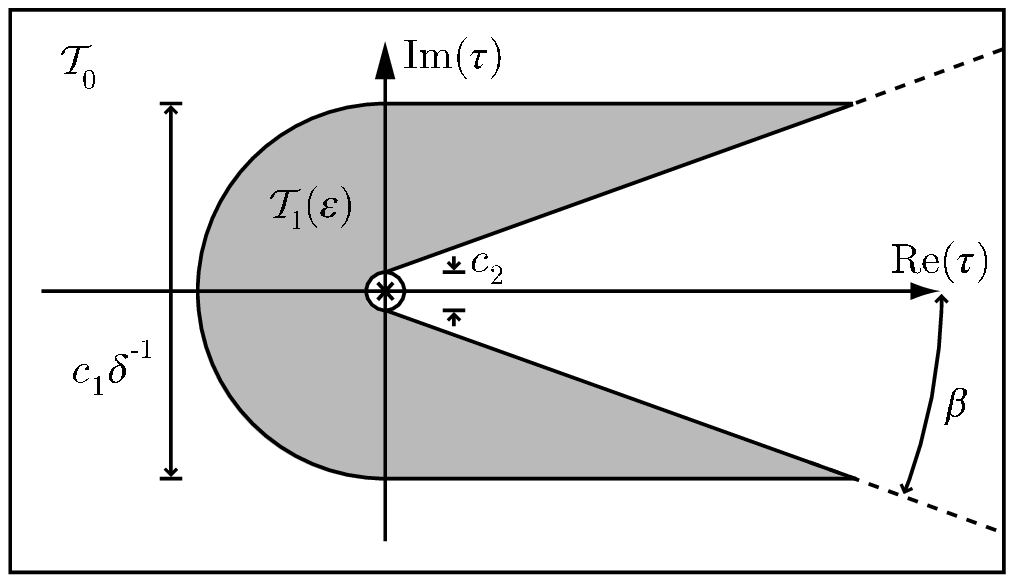}
\end{center}
\caption{An amplification of a neighbourhood of $t=i\pi$ shows the domain  
$\mathcal T_1(\varepsilon)$ in which the unstable separatrix
is still reasonably well approximated by the separatrix of the limit flow
(left). The same domain on the plane of~$\tau$ (right).
\label{Fig:T1u}}
\end{figure}

\begin{theorem}
\label{Thm:FirstApprox_T1}
Let $\psi^-$ be defined by Theorem~\ref{Thm:psim}.
There is $c_2>0$ such that
\begin{equation}
\psi ^{-}(\tau)=\varphi _{\delta }^{n}(\tau)+O_{n,n+1}(\tau ^{-1})
\label{Sum_Approx_in_T1}
\end{equation}
uniformly for all $\tau \in \mathcal{T}_{1}(\varepsilon )$ and $\varepsilon\in(0,\varepsilon_0)$.
\end{theorem}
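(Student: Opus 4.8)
The plan is to re-run the contraction-mapping scheme behind Theorems~\ref{Thm_LAT} and~\ref{Thm:psim}, but on the domain $\mathcal T_1(\varepsilon)$ in place of $\mathcal T_0$, taking as boundary data the estimate already available on the overlap $\mathcal T_0\cap\mathcal T_1(\varepsilon)$. Since $\mathcal T_1(\varepsilon)$ reaches to within a distance of order $\delta$ of the singular point $t=i\pi$, the general theory of close-to-identity maps is no longer enough and one must use features specific to $f_\varepsilon$ — above all that $f_\varepsilon$ is defined in an $\varepsilon$-independent neighbourhood of the origin, so that after the standard scaling the separatrix of the limit flow, which blows up near $i\pi$, still lies inside the (now very large) domain of $F_\delta$. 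It is convenient to work in the rescaled time $\tau=(t-i\pi)/\log\lambda_\delta$, in which the separatrix equation reads $\psi^-(\tau+1)=f_\varepsilon(\psi^-(\tau))$, and in the scaled space variables, in which $\varphi_0$ and $\dot\varphi_0$ are of order one throughout $\mathcal T_1(\varepsilon)$. Writing $g$ for the time-$\log\lambda_\delta$ map of the rescaled interpolating flow $\dot\phi=\mu_{n,\delta}^{-1}\mathrm{J}\nabla H_\delta^n$, so that $\varphi_\delta^n(\tau+1)=g(\varphi_\delta^n(\tau))$ exactly, the error $\xi=\psi^--\varphi_\delta^n$ satisfies
\[
\xi(\tau+1)=A_n(\tau)\,\xi(\tau)+\mathcal E_n(\tau)+R\bigl(\tau,\xi(\tau)\bigr),
\]
with $A_n(\tau)=Dg\bigl(\varphi_\delta^n(\tau)\bigr)$ the monodromy along the approximate separatrix, $\mathcal E_n=f_\varepsilon\circ\varphi_\delta^n-g\circ\varphi_\delta^n$ collecting the interpolation error~(\ref{Eq:errorfi}) restricted to the separatrix together with the $\log\lambda_\delta$-versus-$\mu_{n,\delta}$ mismatch, and $R$ quadratic in $\xi$.

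First I would record the sizes of these ingredients, which is where the constant $c_2$ enters: it is chosen large enough that $\varphi_0$ — hence also $\varphi_\delta^n$ and $\psi^-$ — stays inside a fixed ball on which $f_\varepsilon$ is analytic, so that $f_\varepsilon$, $g$, $A_n$, and the near-pole behaviour of $\varphi_0$ are all uniformly controlled, and the forcing $\mathcal E_n(\tau)$ inherits the order of~(\ref{Eq:errorfi}), which near $i\pi$ is of size governed by a high negative power of $\tau$. The monodromy is the delicate ingredient: differentiating the separatrix equation shows that $\dot\psi^-$ solves the variational equation, so the products of $A_n$ possess a bounded invariant direction; combined with area preservation ($\det A_n=1$) and with the near-parabolicity $\log\lambda_\delta=O(\delta)$, this forces the composed monodromy over the $O(1/\delta)$ steps separating a point of $\mathcal T_1(\varepsilon)$ from $\mathcal T_0$ to be bounded uniformly in $\varepsilon$.

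Next I would turn the difference equation into a fixed-point equation. Using a fundamental matrix $\Xi_n$ of $\Xi_n(\tau+1)=A_n(\tau)\Xi_n(\tau)$ — one column being $\dot\varphi_\delta^n$, the other recovered from $\det A_n=1$ — one inverts the linear part by a discrete variation-of-parameters operator $\mathcal K_n$ that sums the forcing along the backward orbit $\tau-j$, $j\ge0$; the geometry of $\mathcal T_1(\varepsilon)$ (the parameters $c_1$ and $\beta$) is arranged so that this orbit stays in $\mathcal T_1(\varepsilon)\cup\mathcal T_0$ and reaches $\mathcal T_0$, where $\xi=O(\delta^n)$ is already known from Theorem~\ref{Thm:psim} and pins down the homogeneous component. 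This rewrites the problem as $\xi=\mathcal K_n\bigl[\mathcal E_n+R(\cdot,\xi)\bigr]+(\text{boundary term})$. On a Banach space of functions analytic on $\mathcal T_1(\varepsilon)$ with a weighted supremum norm whose unit ball is exactly the set of functions of the form~(\ref{Sum_Approx_in_T1}), I would verify that this map is a contraction: the bound on $\mathcal K_n$, the smallness of $\mathcal E_n$, the quadratic smallness of $R$, and the smallness of the boundary term together give invariance of a small ball and the Lipschitz estimate. This yields the estimate~(\ref{Sum_Approx_in_T1}) for $\xi=\psi^--\varphi_\delta^n$, which we already know to exist.

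I expect the main obstacle to be the uniform-in-$\varepsilon$ bound on the Green's operator $\mathcal K_n$ as $\tau$ approaches the singularity: one must show that the $O(1/\delta)$ accumulated forcing terms, each large near $i\pi$, together with the monodromy over that many steps, still sum to a quantity of the size claimed in~(\ref{Sum_Approx_in_T1}). This is precisely what dictates the shape of $\mathcal T_1(\varepsilon)$: $c_2$ keeps the orbit inside the fixed analyticity ball of $f_\varepsilon$, so that — using area preservation and the near-parabolicity of the fixed point — the composed monodromy stays bounded, while $c_1$ and $\beta$ keep the backward translations $\tau\mapsto\tau-j$ away from the lattice point at the singularity, so that weighted sums such as $\sum_{j\ge0}|\tau-j|^{-2}$ remain uniformly bounded. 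Once this summation estimate is secured, checking the contraction and translating the bound back into the unscaled form are routine.
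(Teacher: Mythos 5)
The paper's own proof of this theorem does not appear in the source you were given, so I will assess your proposal on its internal consistency rather than by direct comparison. The overall strategy — invert the linearised difference equation by a discrete variation-of-parameters operator, use the overlap with $\mathcal T_0$ for boundary data, and close the loop by a fixed-point argument in a $\tau$-weighted norm — is a sensible and standard one. But two of the quantitative claims that your argument leans on are in fact false, and their failure is not a detail but the central difficulty of the theorem.

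First, you assert that in the scaled space variables ``$\varphi_0$ and $\dot\varphi_0$ are of order one throughout $\mathcal T_1(\varepsilon)$.'' They are not: in scaled variables the limit-flow separatrix has a double pole at $t=i\pi$, so on $\mathcal T_1(\varepsilon)$ its components range from $O(1)$ at the outer edge up to $O(\delta^{-2})$, $O(\delta^{-3})$ at the inner circle $|\tau|=c_2$. What \emph{is} $O(1)$ near the singularity is the \emph{unscaled} separatrix, which is precisely why $\mathcal T_1$ forces one to abandon the scaled picture used in Theorem~\ref{Thm_LAT}. This confusion between scaled and unscaled variables propagates into your treatment of $\mathcal E_n$ and of the Lipschitz constants and needs to be fixed before the estimates can be trusted.

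Second — and this is the crux — you claim that $\dot\psi^-$ gives a bounded invariant direction, so that together with $\det A_n=1$ and near-parabolicity ``the composed monodromy $\ldots$ is bounded uniformly in $\varepsilon$.'' That cannot be right: the theorem itself asserts that the error degrades from $O(\delta^n)$ at the edge of $\mathcal T_0$ to $O(1)$ at $|\tau|=c_2$, an amplification factor of order $\delta^{-n}$ accumulated over $O(\delta^{-1})$ iterations, and this amplification must come from the monodromy. Concretely, the direction $\dot\varphi_\delta^n$ (equivalently $\dot\psi^-$), written in the $\tau$ parametrisation, has components of order $\tau^{-3}$ and $\tau^{-4}$ near the inner circle, not $O(1)$. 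So the fundamental matrix you propose to build from $\dot\varphi_\delta^n$ and its symplectic conjugate grows polynomially as $\tau$ decreases. The correct statement, which is what the weighted norm must encode, is that the monodromy products are \emph{polynomially} bounded in $\tau$ with explicit exponents, and these exponents must be matched against the $\tau^{-(n+3)}$ decay of the forcing $\mathcal E_n$ (coming from~(\ref{Eq:errorfi}) evaluated on a separatrix whose unscaled size is $\sim\tau^{-2}$) so that the weighted sum closes to $O(\tau^{-n})$. As written, your argument replaces this quantitative balance by an unjustified boundedness assertion, which is the very thing that would have to be proved. You do identify the summation estimate as ``the main obstacle,'' which is accurate; but the proposed resolution — boundedness forced by area preservation and parabolicity — does not hold, and the proof as outlined would not go through without replacing it by a genuine polynomial-growth estimate of the fundamental matrix and a careful accounting of orders in the weighted norm.
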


We note that $t=i\pi+\tau\log\lambda_\delta$ and
we have overloaded notation by writing $\psi^-(\tau)$ 
instead of $\psi^-(i\pi+\tau\log\lambda_\delta)=\psi^-(t)$.

The estimate\/ {\rm (\ref{Sum_Approx_in_T1})} is uniform which means that
the constant in the term $O_{n,n+1}(\tau ^{-1})$ is independent from
both $\varepsilon$ and $\tau$. It is important because
the size of $\mathcal{T}_{1}(\varepsilon )$ increases
as $\varepsilon\to0$
and includes $\tau$ up to the order of $\delta^{-1}=\varepsilon^{-1/4}$. 
We note that in Theorem \ref{Thm:FirstApprox_T1}
the error of the approximation is of order $O(\delta ^{n})$ on the left boundary of 
$\mathcal{T}_{1}(\varepsilon )$ but it is just of order $O(1)$ on the boundary
near the central circle. 

Therefore the theorem suggests that the distance between the separatrix
of the map and separatrix of the interpolating flow may
gradually increase as the parameter $t$ comes closer to $i\pi$.

\subsection{Complex matching\label{Se:complmatch}}

In this section we construct another approximation for $\psi^-$
which provides higher accuracy in the central part of $\mathcal T_1(\varepsilon)$.
Let us start by looking for a formal solution
\begin{equation}
\hat\psi(\tau )=\sum_{k\geq 0}\delta ^{2k}\psi _{k}(\tau )
\label{Sum_MatchingSeries}
\end{equation}%
of the separatrix equation%
\begin{equation}
\hat\psi(\tau +1)=f_{\varepsilon }(\hat\psi (\tau )).
\label{Sum_MatchingEqn}
\end{equation}%
We assume that the new time $\tau$ is related to the original $t$ by (\ref{Eq:tau}).
We formally substitute the series (\ref{Sum_MatchingSeries})
into the equation (\ref{Sum_MatchingEqn}) and expand the right hand side
in powers of $\delta^2$. Collecting terms of equal order in $\delta$
we get at the leading order
\begin{equation}
\psi _{0}(\tau +1) =f_{0}(\psi _{0}(\tau )), 
\label{Eq:psi0} 
\end{equation}
and the following system of equations for all other orders:
\begin{eqnarray}
\psi _{1}(\tau +1) &=&Df_{0}(\psi _{0}(\tau ))\psi _{1}(\tau ),
 \notag 
\\
\psi _{2}(\tau +1) &=&Df_{0}(\psi _{0}(\tau ))\psi _{2}(\tau )+R_{2}(\psi_{0},\psi _{1}), 
\notag \\[6pt]
&&\vdots   \notag
\end{eqnarray}
In general, the equation corresponding to $\delta ^{2m}$ with $m\geq 2$ can be
written as%
\begin{equation}\label{Eq:psim}
\psi _{m}(\tau +1)=Df_{0}(\psi _{0})\psi _{m}(\tau )+R_{m}(\psi _{0},\psi
_{1},\ldots ,\psi _{m-1}),
\end{equation}
where $R_{m}$ is a polynomial in $\left( \psi _{1},\ldots ,\psi_{m-1}\right) $. 
We stress that the dependence on $\psi _{0}$ is not necessarily polynomial. 
For example, \[R_{2}=\psi_1^t D^{2}f_{0}(\psi _{0})\psi _{1}+f_{1}(\psi _{0})\]
where ${}^t$ is used to denote the transposition and $f_1$ is a coefficient of the
Taylor series $f_\varepsilon=\sum_{k\ge0}\varepsilon^kf_k$.

An analytic solution $\psi^-_m$ of the equation (\ref{Eq:psim}) can be
selected by the following asymptotic condition:
\begin{equation}\label{Eq:psim'}
\psi^-_m(\tau)\asymp \hat\psi_m(\tau)\qquad \mbox{
as $\tau\to\infty$ in the sector  $\beta_0<\arg \tau<2\pi-\beta_0$}\,.
\end{equation}
The formal series $\hat\psi_m(\tau)$ comes from the re-expansion of the formal separatrix
given by (\ref{Sum_AssymptoticBC}). In fact, $\hat\psi_m$ is a formal solution of 
equation (\ref{Eq:psim}).

The method of fixing a solution in one region by initial or boundary conditions
which come from a neighbouring region is known as {\em ``complex matching"}.

\begin{figure}
\begin{center}
\centerline{\includegraphics[width=0.6\textwidth]{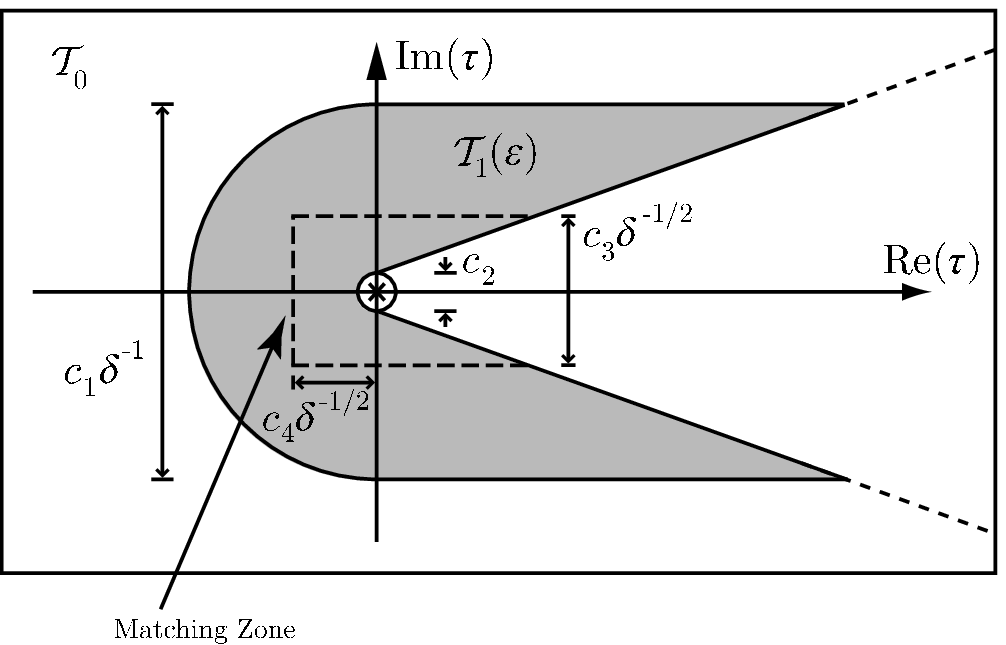}}
\end{center}
\caption{Complex matching.}
\end{figure}

We will now continue with the approximation inside the matching zone, we
call this domain $\mathcal{T}_{2}(\varepsilon )$:
\begin{equation*}
\mathcal{T}_{2}(\varepsilon )=\left\{ \tau \in \mathcal{T}_{1}(\varepsilon
):\func{Re}\tau >-\delta ^{1/2},\,-\delta ^{1/2}<\func{Im}\tau <\delta
^{1/2}\right\} .
\end{equation*}

\begin{figure}
\begin{center}
{\includegraphics{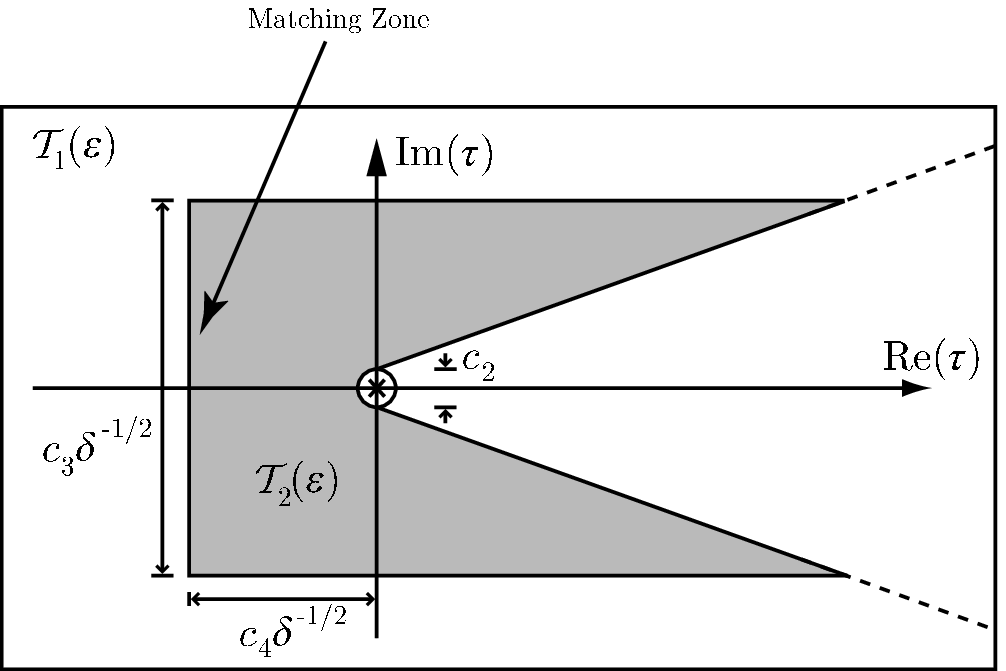}}
\end{center}
\caption{Complex matching.}
\end{figure}

\begin{theorem}[Second approximation theorem]\label{Thm:secap}
\begin{equation*}
\psi ^{-}(\tau )=\sum_{k=0}^{n-1}\delta^{2k} \psi_k^-(\tau)+O(\delta^{n})
\end{equation*}
uniformly in $\mathcal{T}_{2}(\varepsilon )$.
\end{theorem}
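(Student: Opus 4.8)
The plan is to control the difference $D_m(\tau)=\psi^-_m(\tau)-\psi^-_{m,\mathrm{approx}}(\tau)$ order by order in $\delta^2$ inside $\mathcal{T}_2(\varepsilon)$, bootstrapping from the already-established estimate of Theorem~\ref{Thm:FirstApprox_T1} on the larger domain $\mathcal{T}_1(\varepsilon)$. First I would observe that Theorem~\ref{Thm:FirstApprox_T1} gives $\psi^-(\tau)=\varphi^n_\delta(\tau)+O_{n,n+1}(\tau^{-1})$ throughout $\mathcal{T}_1(\varepsilon)$; re-expanding $\varphi^n_\delta$ via its Laurent series (the series (\ref{Eq:formal_laurent})) shows that on the \emph{outer} boundary of $\mathcal{T}_2(\varepsilon)$ — where $|\tau|$ is of order $\delta^{-1/2}$ — one has $\psi^-(\tau)=\sum_{k=0}^{n-1}\delta^{2k}\psi^-_k(\tau)+O(\delta^{n})$ already. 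Indeed on that boundary $\tau^{-1}=O(\delta^{1/2})$, and each truncation error picks up an extra power; choosing $n$ (or the number of matched orders) appropriately makes the two expansions agree to the required precision along $\partial\mathcal{T}_2(\varepsilon)\cap\mathcal{T}_1(\varepsilon)$. This is the boundary data that "complex matching" feeds into the interior problem.

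Next I would set up the interior estimate. Writing $\psi^-=\Psi$ and plugging $\Psi=\sum_{k=0}^{n-1}\delta^{2k}\psi^-_k+\delta^{2n}\rho$ into the separatrix equation (\ref{Sum_MatchingEqn}), the remainder $\rho$ satisfies a linear-in-$\rho$ finite-difference equation of the form $\rho(\tau+1)=Df_0(\psi^-_0(\tau))\rho(\tau)+E(\tau)$, where $E$ collects the mismatch of the truncated series in (\ref{Sum_MatchingEqn}) and is $O(1)$ uniformly on $\mathcal{T}_2(\varepsilon)$ (each of the lower-order equations (\ref{Eq:psi0})–(\ref{Eq:psim}) is solved exactly, so $E$ comes only from the tail $\delta^{2n}$-terms in the expansion of $f_\varepsilon(\Psi)$ in powers of $\delta^2$). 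The homogeneous operator $\rho\mapsto Df_0(\psi^-_0(\cdot))\rho$ is governed by the linearisation of $f_0$ along its parabolic separatrix $\psi^-_0$; one inverts the difference equation by summing the Green's kernel $\prod Df_0(\psi^-_0)$ along the integer-step lattice inside $\mathcal{T}_2(\varepsilon)$, starting from the outer-boundary data controlled in the previous paragraph. The key point is that $\mathcal{T}_2(\varepsilon)$ has diameter $O(\delta^{1/2})$, so the number of lattice steps is $O(\delta^{-1/2})$ and, because $Df_0$ along the separatrix has multipliers bounded by $1+O(\tau^{-1})$ (parabolic behaviour), the product of the propagators grows at most polynomially in the number of steps; combined with the $O(1)$ inhomogeneity one gets $\rho=O(\delta^{-C})$ for some fixed power, hence $\delta^{2n}\rho=O(\delta^{n})$ after re-choosing $n$. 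A contraction/induction on $m$ handles the fact that $R_m$ in (\ref{Eq:psim}) depends on the previously estimated $\psi^-_1,\dots,\psi^-_{m-1}$.

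The technical heart — and the step I expect to be the main obstacle — is the uniform invertibility of the linearised difference operator $\rho\mapsto\rho(\tau+1)-Df_0(\psi^-_0(\tau))\rho(\tau)$ on the thin strip $\mathcal{T}_2(\varepsilon)$ with \emph{constants independent of $\varepsilon$}. Because $f_0$ has a \emph{parabolic} (not hyperbolic) fixed point, the linearised propagator does not contract/expand exponentially, so the naive Gronwall-type bound on the product of $O(\delta^{-1/2})$ matrices would only give a superpolynomial blow-up unless one exploits the precise asymptotics $Df_0(\psi^-_0(\tau))=\mathrm{id}+O(\tau^{-1})$ together with cancellations coming from the explicit form of $\psi^-_0$ (which solves (\ref{Eq:psi0}) and is, at leading order, essentially $\varphi_0$ evaluated at the singularity). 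Establishing the sharp polynomial bound on the discrete fundamental solution, and verifying that the matching data from $\mathcal{T}_1(\varepsilon)$ is compatible with it (i.e.\ the two asymptotic series (\ref{Eq:psim'}) and (\ref{Sum_Approx_in_T1}) are genuinely the \emph{same} formal object, so that no spurious growing homogeneous solution is excited), is where the real work lies; the rest is bookkeeping of powers of $\delta$ under the scaling (\ref{Eq:tau}).
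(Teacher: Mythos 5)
Your scheme is the natural one for this theorem: use Theorem~\ref{Thm:FirstApprox_T1} to control $\psi^-$ on the outer part of the matching zone, re-expand $\varphi^n_\delta$ via its Laurent series to reconcile this with $\sum_k\delta^{2k}\psi_k^-$ on that boundary, then propagate inward across $\mathcal{T}_2(\varepsilon)$ via the separatrix equation. You also correctly locate the essential difficulty: because $f_0$ has a parabolic fixed point, $Df_0(\psi_0^-(\tau))$ is unipotent plus $O(\tau^{-1})$, so the discrete fundamental solution over the matching strip can only be controlled polynomially, never exponentially, in the number of steps.

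But the proposal stops exactly where the proof begins: the polynomial propagator bound, with explicit exponent and uniformity in $\delta$, \emph{is} the theorem, not a step to be deferred. Two quantitative claims along the way are also suspect. You assert that the inhomogeneity $E$ in the remainder equation is $O(1)$ uniformly on $\mathcal{T}_2(\varepsilon)$, but since $\psi_k^-(\tau)=O(\tau^{2k})$, the Taylor tail of $f_\varepsilon$ at the partial sum, once $\delta^{2n}$ is divided out, is a priori $O(\tau^{2n})=O(\delta^{-n})$ on $|\tau|\sim\delta^{-1/2}$; getting $E=O(1)$ requires a genuine cancellation argument, exploiting that the $\hat\psi_k$ are a formal solution so that only a beyond-all-orders remainder survives, and you have not given it. You also write that $\mathcal{T}_2(\varepsilon)$ has diameter $O(\delta^{1/2})$ yet the number of step-one lattice points is $O(\delta^{-1/2})$; these cannot both hold, and the geometry of the matching zone must be pinned down before the propagator estimate can even be counted. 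Finally, the ``compatibility'' of the two boundary expansions is not a sanity check but a substantive step: both must be shown asymptotic, with quantified error, to the same formal series~\eqref{Sum_AssymptoticBC} — that is Section~\ref{Se:laurentexp} together with the uniqueness in Theorem~\ref{Thm:fs} — and the resulting defect on the matching boundary has to be small enough to survive the inward propagation. What you have is a credible road map with the central difficulty correctly flagged; it is not yet a proof.
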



We note that this theorem implies the following.

\begin{corollary}
\label{Cor:unstappr}
There is a parametrisation of the unstable separatrix such that 
\begin{equation}\label{Eq:unstappr}
\psi ^{-}(\tau )=\sum_{k=0}^{n-1}\delta ^{2k}\psi _{k}^-(\tau )
+O((\delta
\tau )^{2n}).
\end{equation}
uniformly in $\mathcal{T}_{2}(\varepsilon )$.
\end{corollary}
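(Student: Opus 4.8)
The plan is to derive Corollary~\ref{Cor:unstappr} from Theorem~\ref{Thm:secap} by combining two pieces of information: the uniform $O(\delta^n)$ bound valid on all of $\mathcal T_2(\varepsilon)$, and the fact that on $\mathcal T_2(\varepsilon)$ the quantity $\delta\tau$ is bounded by $\delta^{1/2}$, so the ``large $\tau$'' behaviour of the remainder can be captured by a power of $\delta\tau$. The key observation is that the difference $\mathbf r_n(\tau):=\psi^-(\tau)-\sum_{k=0}^{n-1}\delta^{2k}\psi_k^-(\tau)$ is, for each fixed $\delta$, an analytic function of $\tau$ on $\mathcal T_1(\varepsilon)$ which decays as $\tau\to\infty$ in the matching sector: indeed, each $\psi_k^-$ is asymptotic to the formal series $\hat\psi_m(\tau)$ by (\ref{Eq:psim'}), and by Theorem~\ref{Thm:FirstApprox_T1} the true separatrix $\psi^-$ is approximated by $\varphi_\delta^n$ (hence by the same formal data) with error $O_{n,n+1}(\tau^{-1})$ on the outer part of $\mathcal T_1(\varepsilon)$. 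So $\mathbf r_n$ is ``small at infinity'' and satisfies a linear difference equation whose leading coefficient is $Df_0(\psi_0(\tau))$.

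First I would make precise the sense in which $\mathbf r_n$ vanishes to high order at $\tau=\infty$. The homogeneity conventions fixed in Section~\ref{Se:overview} (x of order $2$, y of order $3$, $\varepsilon$ of order $4$) together with the structure (\ref{Eq:formal_laurent})--(\ref{Sum_AssymptoticBC}) of the formal separatrix show that $\delta^{2k}\psi_k^-(\tau)$ carries an intrinsic scaling in $\delta^2\tau^2$: the $\delta^{2m}$-coefficient of the first component is $\tau^{2m-2}$ times a series in $\tau^{-1}$, so that $\delta^{2m}\hat\psi_m(\tau)\sim(\delta^2\tau^2)^{m}\tau^{-2}\times(\text{series in }\tau^{-1})$. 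Consequently the first $n$ terms of the matching series, resummed, approximate $\psi^-$ with an error whose natural size on $\mathcal T_2(\varepsilon)$ is governed by $(\delta^2\tau^2)^n=(\delta\tau)^{2n}$ rather than merely by $\delta^n$. I would formalise this by writing the remainder after $n$ terms of the matching series as a ``tail'' that inherits the quasi-homogeneous weights, then check that on the region $\func{Re}\tau>-\delta^{1/2}$, $|\func{Im}\tau|<\delta^{1/2}$ (where $|\tau|\le C\max\{1,|\func{Re}\tau|\}$ and $\delta|\tau|\le\delta^{1/2}$) this tail is $O((\delta\tau)^{2n})$.

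The mechanism making this rigorous is a maximum-principle / Phragmén–Lindelöf argument for the difference equation satisfied by $\mathbf r_n$. From Theorem~\ref{Thm:secap} we already know $\mathbf r_n=O(\delta^n)$ uniformly on $\mathcal T_2(\varepsilon)$; to upgrade the bound near the centre I would use that $\mathbf r_n$ also solves (to the relevant order) the linear equation $\mathbf r_n(\tau+1)=Df_0(\psi_0(\tau))\mathbf r_n(\tau)+(\text{terms }O(\delta^{2n})\text{ in }\psi_0,\dots)$, that $Df_0(\psi_0(\tau))\to Df_0(\mathbf 0)$ as $\tau\to\infty$ (a unipotent matrix, contributing only polynomial growth in $\tau$), and that on the outer boundary of $\mathcal T_1(\varepsilon)$ (equivalently for $|\tau|\sim\delta^{-1}$) the combined estimates of Theorems~\ref{Thm:FirstApprox_T1} and the asymptotics (\ref{Eq:psim'}) give $\mathbf r_n=O(\delta^n)=O((\delta\tau)^{2n}\cdot\delta^{n-2n}\tau^{-2n})$, which is consistent with the claimed bound since there $\delta\tau\sim1$. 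Propagating this boundary bound inward through the iteration of the difference equation (the number of steps needed is $O(\delta^{-1/2})$, and the matrix products contribute only a polynomial-in-$\tau$ factor that is harmless against the gain $(\delta\tau)^{2n}$ provided $n$ is taken large enough and then the general $n$ follows since the statement is monotone in $n$) yields (\ref{Eq:unstappr}).

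I expect the main obstacle to be controlling the accumulation of the matrices $\prod Df_0(\psi_0(\tau-j))$ along the roughly $\delta^{-1/2}$ iteration steps inside $\mathcal T_2(\varepsilon)$ while keeping the estimate sharp enough to see the $(\delta\tau)^{2n}$ rate rather than a cruder $\delta^{n-\text{something}}$: because $Df_0$ is only unipotent at the fixed point, each step can inflate the error by a polynomial factor in $\tau$, and one must verify that the $\tau$-powers bookkept in the quasi-homogeneous weighting exactly absorb this growth. A secondary technical point is ensuring that the freedom $t\mapsto t+t_0(\varepsilon)$ in the parametrisation of $\psi^-$ is fixed compatibly with the normalisation implicit in (\ref{Eq:psim'}), which is why the corollary asserts existence of \emph{a} parametrisation rather than a property of all of them; I would pin down $t_0(\varepsilon)$ by matching the constant term in the $\tau^{-1}$-expansion against $\hat\psi_0$.
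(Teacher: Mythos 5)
You have missed the simple reindexing trick that makes this corollary a one-line consequence of Theorem~\ref{Thm:secap}, and as a result you propose to re-derive a significant amount of machinery whose difficulties you yourself flag as unresolved. The paper's proof does not try to sharpen the error $O(\delta^n)$ for a fixed $n$ at all. Instead it invokes Theorem~\ref{Thm:secap} with $n$ replaced by $2n$, giving
\[
\psi^-(\tau)=\sum_{k=0}^{2n-1}\delta^{2k}\psi_k^-(\tau)+O(\delta^{2n})
\]
uniformly on $\mathcal T_2(\varepsilon)$, and then simply peels off the tail $\sum_{k=n}^{2n-1}\delta^{2k}\psi_k^-(\tau)$: by the asymptotic boundary condition (\ref{Eq:psim'}) and the form (\ref{Eq:psim_hat}) of $\hat\psi_k$ one has $\psi_k^-(\tau)=O(\tau^{2k})$, so for $n\le k\le 2n-1$ each tail term is $\delta^{2k}O(\tau^{2k})=O((\delta\tau)^{2k})=O((\delta\tau)^{2n})$ on $\mathcal T_2$ where $\delta|\tau|\lesssim\delta^{1/2}$ is small, and the leftover $O(\delta^{2n})$ is also $O((\delta\tau)^{2n})$ because $|\tau|$ is bounded below on $\mathcal T_2$ (the domain excludes a fixed disk around the singularity). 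No maximum-principle argument, no propagation of bounds through the difference equation, no control of matrix products $\prod Df_0(\psi_0(\tau-j))$ is needed.

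You did correctly identify the quasi-homogeneous scaling intuition (that $\delta^{2m}\hat\psi_m\sim(\delta\tau)^{2m}$-type weight), which is exactly the observation that $\psi_k^-=O(\tau^{2k})$, but the mechanism you attach to it is wrong: the upgrade from $O(\delta^n)$ to $O((\delta\tau)^{2n})$ is obtained by asking Theorem~\ref{Thm:secap} for more terms, not by analysing the recurrence for $\mathbf r_n$. Your proposed Phragm\'en--Lindel\"of / iteration-propagation route would, if made rigorous, essentially reprove Theorem~\ref{Thm:secap} from scratch; the obstacle you point out (polynomial inflation at each of $O(\delta^{-1/2})$ steps due to the unipotent linear part, and whether the $\tau$-weighting absorbs it) is a real one, and it is precisely what the paper avoids. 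Also, the parametrisation freedom $t\mapsto t+t_0(\varepsilon)$ is a non-issue here: the parametrisation is already fixed by Theorems~\ref{Thm:psim} and~\ref{Thm:secap}, so there is nothing further to pin down in the corollary.
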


\begin{proof}
The asymptotic boundary conditions implies $\psi^-_k(\tau)=O(\tau^{2k})$.
Then the theorem with $n$ replaced by $2n$ implies
\begin{eqnarray*}
\psi ^{-}(\tau ) &=&\sum_{k=0}^{2n-1}\delta ^{2k}\psi _{k}^-(\tau )+O(\delta
^{2n}) \\
&=&\sum_{k=0}^{n-1}\delta ^{2k}\psi _{k}^-(\tau )+O((\delta\tau) ^{2n}).
\end{eqnarray*}
\end{proof}

\subsection{The stable manifold}

So far we have only considered the unstable separatrix.
In order to estimate the splitting of separatrices 
we also need to study the stable one. 
We note that the stable manifold is an unstable one for the inverse map.

Therefore the study of the stable separatrix is analogous to the
study of the unstable one but with time reversed.
The transformation $t\mapsto-t$ swaps the upper and lower
half-planes of $\mathbb C$ and it is convenient
to use real-analytic symmetry to swap these back.
Let $\mathcal{T}_{0}^+,\mathcal{T}_{1}^+(\varepsilon ),\mathcal{T}_{2}^+(\varepsilon )\subset\mathbb C$
denote domains obtained by reflecting
$\mathcal{T}_{0},\mathcal{T}_{1}(\varepsilon ),\mathcal{T}_{2}(\varepsilon )$
in the imaginary axis respectively. The limit flow and approximately
interpolating flows are integrable and their stable and unstable
separatrices coincide. Therefore the same separatrix solution arise
and the same formal expansion (\ref{Sum_AssymptoticBC}) 
is used for the complex matching of the stable separatrix. 
However, the matching is done on the right hand side of the
singularity with asymptotic boundary conditions as $\tau \rightarrow
+\infty $.

\begin{figure}
\begin{center}
{\includegraphics[width=0.7\textwidth]{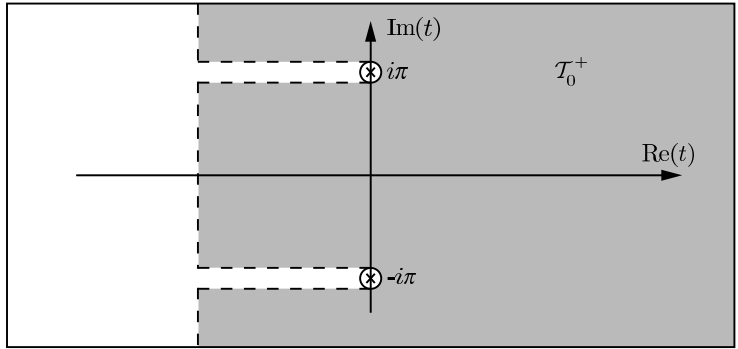}}
\end{center}
\caption{The shadowed region represents the domain of validity 
for the local approximation theorem for the stable separatrix.
\label{Fig:StableT0}}
\end{figure}

Similarly to Theorem~\ref{Thm:psim}, we conclude that for every $n\in\mathbb N$
there is a solution $\psi^+$ of the equation (\ref{Eq:mainfde}) such that 
\begin{equation}\label{Eq:psipappr}
\psi^{+}(t)=\varphi_\delta^{2n}(t)+O_{2n+2,2n+3}(\delta)
\end{equation}
uniformly for $t\in\mathcal T_0^{+}$ (see Figure~\ref{Fig:StableT0}).
Note that we directly started with $n$ replaced by $2n$.
For this solution, instead of (\ref{Sum_Approx_in_T1}) we get
\begin{equation}\label{Sum_Approx_in_T1p}
\psi^{+}(t)=\varphi_\delta^{2n}(t)+O_{2n,2n+1}(\tau^{-1})
\end{equation}
uniformly in $\mathcal T_1^+(\varepsilon)$ and $\varepsilon\in(0,\varepsilon_0)$
and instead of Corollary~\ref{Cor:unstappr} we get
\begin{corollary} \label{Cor:stappr}
Uniformly in $\mathcal T_2^+(\varepsilon)$ 
\begin{equation}\label{Eq:stappr}
\psi ^{+}(\tau ) =\sum_{k=0}^{n-1} \delta^{2k}\psi ^{+}_k(\tau )+O((\delta
\tau )^{2n}) 
\end{equation}%
where $\psi _{k}^{+}(\tau )$ is the solution of the asymptotic boundary
condition problem\/ {\rm (\ref{Eq:psi0})} or\/ {\rm (\ref{Eq:psim})}
subject to the asymptotic boundary conditions 
\[
\psi^+_k(\tau)\asymp \hat\psi_k(\tau)\qquad \mbox{
as $\tau\to\infty$ in the sector  $-\pi+\beta_0<\arg \tau<\pi-\beta_0$}\,.
\]
\end{corollary}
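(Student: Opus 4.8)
\textbf{Proof proposal for Corollary~\ref{Cor:stappr}.}

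The plan is to obtain Corollary~\ref{Cor:stappr} for the stable manifold by the same argument that produced Corollary~\ref{Cor:unstappr} for the unstable one, transported through the time-reversal symmetry. First I would recall that the stable separatrix of $f_\varepsilon$ is the unstable separatrix of $f_\varepsilon^{-1}$, and that $f_\varepsilon^{-1}$ is again an analytic area-preserving family satisfying all the hypotheses of the earlier sections (the linear part \eqref{Eq:Df0} is replaced by its inverse, which has the same Jordan form, and the non-degeneracy \eqref{Eq:nondegeneracy} is preserved up to the sign flips already discussed). Hence Theorems~\ref{Thm:psim}, \ref{Thm:FirstApprox_T1} and \ref{Thm:secap} apply verbatim to $f_\varepsilon^{-1}$, yielding a solution of \eqref{Eq:mainfde} which, after undoing the reflection $t\mapsto-t$ in the imaginary axis, gives a parametrisation $\psi^+$ of the stable separatrix with the local estimate \eqref{Eq:psipappr} on $\mathcal T_0^+$, its sharpening \eqref{Sum_Approx_in_T1p} on $\mathcal T_1^+(\varepsilon)$, and the matching-zone estimate
\[
\psi^+(\tau)=\sum_{k=0}^{2n-1}\delta^{2k}\psi^+_k(\tau)+O(\delta^{2n})
\]
uniformly on $\mathcal T_2^+(\varepsilon)$, where $\psi^+_k$ solves the same hierarchy \eqref{Eq:psi0}, \eqref{Eq:psim} but with the asymptotic boundary condition imposed in the sector $-\pi+\beta_0<\arg\tau<\pi-\beta_0$, which is the $\tau\to+\infty$ counterpart of \eqref{Eq:psim'}.

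The only genuinely new point is to check that the \emph{same} formal series \eqref{Sum_AssymptoticBC} controls the stable side, i.e. that the re-expansion $\hat\psi_m(\tau)$ of the formal separatrix is still the relevant formal solution to be matched. This is where I would use that the limit flow and the truncated interpolating flows $H_\delta^n$ are integrable Hamiltonian systems: their stable and unstable separatrices coincide as sets and are parametrised by the \emph{single} family $\varphi_\delta^n$ of Lemma~\ref{Le:sepint}. Consequently the Laurent expansion near the singularity $t=i\pi$ leading to \eqref{Eq:formal_laurent}--\eqref{Sum_AssymptoticBC} is identical for both separatrices; only the sector in which one imposes convergence of the asymptotic series changes, because for the stable manifold the relevant half-line of approach to the saddle is $t\to+\infty$ rather than $t\to-\infty$, which reflects the sector $\beta_0<\arg\tau<2\pi-\beta_0$ into $-\pi+\beta_0<\arg\tau<\pi-\beta_0$. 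Once this is recorded, the derivation of Corollary~\ref{Cor:stappr} from the $\delta^{2n}$-estimate is word-for-word the proof of Corollary~\ref{Cor:unstappr}: the asymptotic boundary condition gives $\psi^+_k(\tau)=O(\tau^{2k})$, so replacing $n$ by $2n$ in the second approximation theorem and peeling off the terms with $n\le k\le 2n-1$ converts the remainder $O(\delta^{2n})$ into $O((\delta\tau)^{2n})$ on $\mathcal T_2^+(\varepsilon)$, since there $|\tau|\ge\mathrm{const}$ and the peeled terms are each $O(\delta^{2k}\tau^{2k})=O((\delta\tau)^{2k})\le O((\delta\tau)^{2n})$ once $|\delta\tau|$ is bounded, while the retained sum is exactly $\sum_{k=0}^{n-1}\delta^{2k}\psi^+_k(\tau)$.

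The main obstacle, such as it is, is bookkeeping rather than mathematics: one must make sure the reflection $t\mapsto -t$ composed with the real-analytic symmetry is applied consistently to the domains $\mathcal T_j^+(\varepsilon)$, to the time parameter $\tau=(t-i\pi)/\log\lambda_\delta$ (note $i\pi$ is the singularity in the \emph{upper} half-plane, which is the one retained after the reflection), and to the sectors in the asymptotic conditions, so that the signs and the direction of approach to the saddle match up. I would also verify that $\log\lambda_\delta$ and the fixed point $\mathbf p_\delta$ are the same for $f_\varepsilon$ and $f_\varepsilon^{-1}$ (they are: the multipliers are inverse to each other, $\lambda_\delta$ being the larger one is unchanged, and the fixed point is the same), so that the single difference equation \eqref{Eq:mainfde} indeed governs both $\psi^-$ and $\psi^+$. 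With these compatibility checks in place, no new estimate is required and the corollary follows.
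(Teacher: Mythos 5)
Your argument is correct and mirrors the paper's own reasoning: the paper reduces the stable separatrix to the unstable one by the time-reversal combined with real-analytic symmetry (equivalently, by passing to $f_\varepsilon^{-1}$), observes that the integrable interpolating flows have coinciding stable and unstable separatrices so that the same formal series~\eqref{Sum_AssymptoticBC} governs the matching with the sector reflected, and then repeats the short deduction used for Corollary~\ref{Cor:unstappr} (replace $n$ by $2n$, use $\psi_k^+(\tau)=O(\tau^{2k})$, absorb the tail terms into $O((\delta\tau)^{2n})$). Your proposal merely makes explicit the compatibility checks (same $\mathbf p_\delta$ and $\log\lambda_\delta$, reflected domains $\mathcal T_j^+$ and reflected asymptotic sector) that the paper's subsection on the stable manifold leaves implicit.
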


We note that in general $\psi _{0}^{+}\neq \psi _{0}^{-}$~\cite{Gel00,GN2008}.

\subsection{Upper bounds for the splitting of separatrices}

We assume that both $\psi^-$ and $\psi^+$ are chosen to be closed to
$\varphi^{2n}_\delta$. Then comparing the estimates (\ref{Eq:psimappr})
and (\ref{Eq:psipappr})
\[
\psi^-(t)-\psi^+(t)=O(\delta^{2n})
\]
uniformly in $\mathcal T_0\cap \mathcal T_0^+$.
Then we use the estimates (\ref{Sum_Approx_in_T1p}) (with $n$ replaced by $2n$) 
and (\ref{Sum_Approx_in_T1p}) to conclude
\[
\psi^-(t)-\psi^+(t)=O_{2n,2n+1}(\tau^{-1})
\]
uniformly in $\mathcal T_1(\varepsilon)\cap \mathcal T^+_1(\varepsilon)$.
Taking into account the difinitions of the domains
we see that the union of validity domains of these two estimates
includes the rectangle defined by the inequalities
$|\Im{t}|<\pi-\delta^{1/2}$ and $|\Re{t}|<\delta^{1/2}$
and for these values of $t$
\begin{equation}
\psi^+(t)-\psi^-(t)=O(\tau^{-n})=O(\delta^{n})\,.
\end{equation}
In the innermost region $\mathcal T_2(\varepsilon)$, closer to the singularity at $i\pi$, 
we use a different argument. In this region the estimate has to be
be close to optimal: we will use the upper bound 
to show that the square of the distance 
between the stable and unstable separatrices
is negligible
compared to the leading term of the splitting 
(on a line with $\Im\tau=-\sigma\log\delta$).

The upper bound for the splitting near the singularity 
is obtained in the following way:
by Corollaries \ref{Cor:unstappr} and \ref{Cor:stappr}
\begin{equation}\label{Eq:dpsi'}
\psi^+(\tau)-\psi^-(\tau)=\sum_{k=0}^{n-1}
\delta^{2k}\left(\psi_k^+(\tau)-\psi_k^-(\tau)\right) + 
O\left((\delta\tau)^{2n}\right)
\end{equation}
uniformly in $\mathcal T_2(\varepsilon)\cap\mathcal T_2^+(\varepsilon)$.
We use that
\begin{equation}\label{Eq:dpsik}
\psi_k^+(\tau)-\psi_k^-(\tau)=O(\tau^{m_k}e^{-2\pi i\tau})
\end{equation}
provided $-\pi+\beta<\arg\tau<-\beta$ and $\Im\tau<-c_2$.%
Consequently
\begin{equation}\label{Eq:dpsi}
\psi^+(\tau)-\psi^-(\tau)=
O(\tau^4e^{-2\pi i\tau})+O((\delta\tau)^{2n})
\end{equation}
provided $-c_2>\Im\tau>-\delta^{-1/2}$ and $-\pi+\beta<\arg\tau<-\beta$.

\subsection{The flow box}

In order to provide a quantitative description
for the difference between $\psi^+$ and $\psi^-$ 
we use the method based on flow box coordinates.
We will prove a theorem which essentially says 
that the restriction of the map $f_\varepsilon$
on a non-invariant subset can be analytically 
interpolated by an autonomous 
Hamiltonian flow with one degree of freedom. 
The subset in question contains large (but again non-invariant) 
segments of the stable and unstable separatrices.
Simultaneously we introduce ``energy-time'' coordinates $(T,E)$ 
for the interpolating flow.

Consider a domain in $\mathbb C^2$ defined by
\begin{equation}
\mathcal U_\delta=\mathcal{T} \times  \mathcal{E} =
\left\{\,
\left\vert \func{Re}(T)\right\vert <2,\ \left\vert \func{Im}(T)
\right\vert <\frac{\pi}{\log\lambda_\delta}-c_3
\,\right\} 
\times 
\left\{\,  \left\vert E\right\vert
<E_{0}(\delta )
\,
\right\} .  \label{DomainS}
\end{equation}%

\begin{figure}
\begin{center}
{\includegraphics[width=0.65\textwidth]{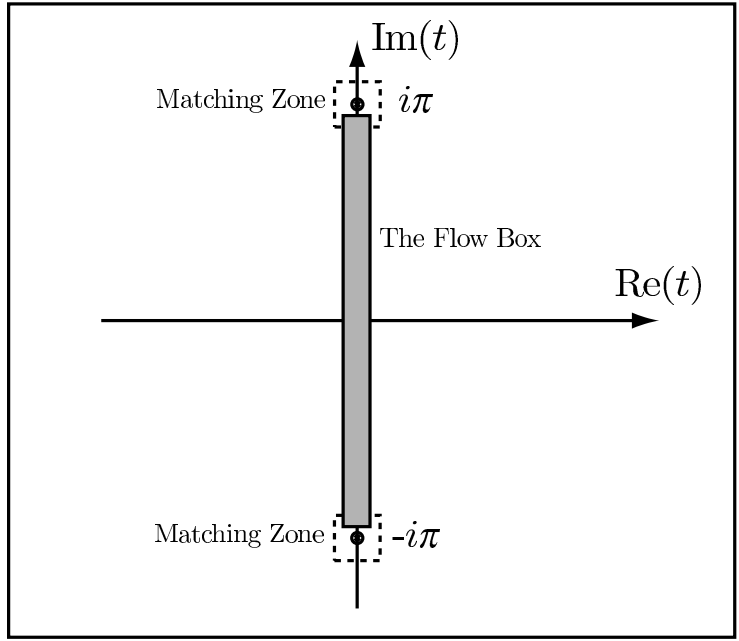}}
\end{center}
\caption{Time component of the flow box coordinates 
($t:=T\log\lambda_\delta$).}
\end{figure}

\begin{theorem}[The flow box theorem]\label{Thm:flowbox} 
If $c_3$ is sufficiently large, there exists a symplectic diffeomorphism $S:\mathcal U_\delta\to
\mathcal V_\delta:=S(\mathcal U_\delta)$ such that
\begin{enumerate}
\item
$\left.f_{\varepsilon }\right|_{\mathcal V_\delta}=\Phi _{E}^{1}$, 
where $E:\mathcal V_\delta\to\mathbb C$ is the second component of the map~$S^{-1}$.
\item
$S(T,0)= \psi ^{-}(T\log \lambda_\delta )$ \ \ (Normalisation).
\item
$\left\Vert S^{-1}\right\Vert _{C^{2}}$ is bounded uniformly for $\varepsilon\in(0,\varepsilon_0)$.
\end{enumerate}
\end{theorem}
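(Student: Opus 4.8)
The plan is to construct the flow box coordinates directly along the unstable separatrix, using the normalisation in item~2 to pin down the time coordinate and then extending transversally by a symplectic ``straightening'' of the map. First I would set up a preliminary coordinate system in which $\psi^-$ becomes the axis $\{E=0\}$: define $\tilde S(T,E)=\psi^-(T\log\lambda_\delta)+E\,v(T)$, where $v(T)$ is a field of vectors transversal to $\dot\psi^-$ chosen so that $\Omega(\dot\psi^-,v)\equiv 1$ (for instance $v=\mathrm{J}\,\dot\psi^-/\|\dot\psi^-\|^2$ adjusted to be analytic; near $t=\pm i\pi$ one uses that the approximations of Theorem~\ref{Thm:FirstApprox_T1} and Corollary~\ref{Cor:unstappr} control $\dot\psi^-$ away from zero after rescaling). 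This $\tilde S$ is not symplectic, but its Jacobian is $1+O(E)$ on $\mathcal U_\delta$ because $\Omega(\dot\psi^-,v)=1$ along $E=0$; a standard Moser-type deformation argument (integrating a time-dependent Hamiltonian vector field) corrects it to a genuine symplectomorphism $S_0$ that still satisfies $S_0(T,0)=\psi^-(T\log\lambda_\delta)$, with $\|S_0^{\pm1}\|_{C^2}$ uniformly bounded since the correction is $O(E)$ and $E_0(\delta)$ is taken small.

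Next I would upgrade $S_0$ so that $f_\varepsilon$ becomes the time-one shift $\Phi_E^1$. In the $S_0$-coordinates the map $f_\varepsilon$ reads $\hat f(T,E)=(T+\log\lambda_\delta+\cdots)$... wait, more precisely we want the step in $T$ to be $1$, so I would rescale $t=T\log\lambda_\delta$ from the outset; then $\hat f=S_0^{-1}\circ f_\varepsilon\circ S_0$ is a symplectic map which, by item~2 and equation~(\ref{Eq:mainfde}), preserves the axis $\{E=0\}$ and acts on it as $T\mapsto T+1$. Here the close-to-identity structure is essential: after the standard scaling $f_\varepsilon$ is $\mathrm{id}+\delta G_\delta$, and in the flow-box domain — whose width in real time is bounded and whose imaginary extent is cut off at distance $c_3$ from the singularity — the interpolation results of Section~\ref{Se:closetoid} (used already to prove Theorem~\ref{Thm_LAT}) show that $\hat f$ is interpolated, to all orders, by an autonomous Hamiltonian flow $\Phi_{K_\delta}^1$ with $K_\delta(T,E)=E+O(\delta)$ and $K_\delta$ analytic on $\mathcal U_\delta$. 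Because $\hat f$ preserves $\{E=0\}$ and restricts to a pure translation there, $K_\delta$ must vanish on $\{E=0\}$ and have $\partial_E K_\delta=1$ there, so $E\mapsto K_\delta(T,E)$ is, for each fixed $T$, an analytic diffeomorphism onto a neighbourhood of $0$; composing with the symplectic change of energy variable $E\mapsto K_\delta(T,E)$ (which is generated by a function of $E$ alone, hence symplectic and identity on $\{E=0\}$) turns $K_\delta$ into the new energy $E$ itself, i.e. gives the desired $S$ with $f_\varepsilon|_{\mathcal V_\delta}=\Phi_E^1$, preserving the normalisation.

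The delicate point — and where I expect the real work to lie — is the \emph{uniformity} of $\|S^{-1}\|_{C^2}$ as $\delta\to0$, in a domain whose imaginary height $\pi/\log\lambda_\delta - c_3$ blows up. The interpolating-Hamiltonian construction of Section~\ref{Se:closetoid} produces $K_\delta$ with bounds governed by the analyticity domain of $f_\varepsilon$ in the scaled variables, and one must check that along the separatrix — which comes within $O(\delta^{1/2})$ of the poles at $\pm i\pi$ before the cutoff $c_3$ takes effect — the transversal field $v(T)$, and hence $S_0$ and $S$, do not degenerate. This is controlled precisely by the two-sided approximation $\psi^-(\tau)=\varphi_\delta^n(\tau)+O_{n,n+1}(\tau^{-1})$ of Theorem~\ref{Thm:FirstApprox_T1} together with the matching estimate of Corollary~\ref{Cor:unstappr}: after rescaling by the appropriate power of $\tau$ (absorbing the pole of $\varphi_0$), $\dot\psi^-$ stays bounded away from zero on the part of $\mathcal U_\delta$ near $i\pi$, so $v$ is well-defined and of controlled norm there, and the cutoff $c_3$ is chosen exactly large enough to stay inside this good region. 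The rest — the Moser deformation and the energy reparametrisation — introduce only $O(E)=O(E_0(\delta))$ corrections, so taking $E_0(\delta)$ small (but, as needed later, not too small: a fixed small constant suffices) keeps all $C^2$ norms uniformly bounded. I would carry the argument out in the scaled variables throughout and only restate the conclusion in the $t=T\log\lambda_\delta$ normalisation at the end.
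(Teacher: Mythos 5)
The paper's own proof of this theorem is not included in the source you were given (the text stops after Section~\ref{Se:closetoid}, and the flow-box theorem is stated but never returned to), so a direct comparison is impossible; I will evaluate the proposal on its merits. Your two-step structure — first symplectically straighten the separatrix onto $\{E=0\}$, then straighten the dynamics — is a natural and plausible plan, and your discussion of how to control $v(T)$ near the poles of $\varphi_0$ shows awareness of the genuine difficulty. However, there is a gap at the crux of the argument.

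The theorem asserts an \emph{exact} conjugacy $\left.f_\varepsilon\right|_{\mathcal V_\delta}=\Phi_E^1$. You reach this via the formal interpolation machinery of Section~\ref{Se:closetoid}, stating that $\hat f=S_0^{-1}\circ f_\varepsilon\circ S_0$ is ``interpolated, to all orders, by an autonomous Hamiltonian flow $\Phi_{K_\delta}^1$''. But what that machinery delivers is the asymptotic statement $\hat f=\Phi_{K_\delta^{(n)}}^1+O(\delta^{n+1})$ for every truncation $n$; it does not produce a single analytic $K_\delta$ with $\hat f=\Phi_{K_\delta}^1$ on the nose, and the formal series is generically divergent. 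Consequently, your construction yields only an approximate flow box, not the exact one the theorem demands. The passage from asymptotic to exact is precisely the nontrivial content here: one has to exploit that $\mathcal U_\delta$ is a \emph{non-invariant} strip of bounded real width (only $O(\delta^{-1})$ iterates of $f_\varepsilon$ ever occur) and solve the cohomological equations $T\circ f_\varepsilon=T+1$, $E\circ f_\varepsilon=E$ exactly on a fundamental domain — typically by a contraction/normalisation argument using the already-obtained local coordinates as an initial guess — rather than by summing a normal-form series. Without this step the proof is incomplete.

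A secondary but real issue: the substitution $(T,E)\mapsto\bigl(T,K_\delta(T,E)\bigr)$ is \emph{not} symplectic when $K_\delta$ depends on $T$, and nothing you have established forces $T$-independence — the conditions $K_\delta|_{E=0}=0$ and $\partial_E K_\delta|_{E=0}=1$ leave $T$-dependence at order $E^2$ and higher. You write that the change is ``generated by a function of $E$ alone'', but that is an assumption, not a conclusion. A correct straightening must also shift $T$, say $T'=T+a(T,E)$ with $a$ determined by $(1+\partial_T a)\partial_E K_\delta-\partial_E a\,\partial_T K_\delta=1$; one then checks $a(T,0)$ is constant (using $\partial_T K_\delta|_{E=0}=0$), so the normalisation $S(T,0)=\psi^-(T\log\lambda_\delta)$ can be preserved, but that argument needs to be made explicit, since as stated your final $S$ would fail to be symplectic.
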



\medskip

It is convenient to consider $S^{-1}$ as a 
symplectic coordinate map defined on $\mathcal V_\delta$.
In coordinates $(T,E)$, the Hamiltonian equations
with the Hamiltonian function $E$ take the form
\[
T'=1\,,\qquad E'=0
\]
and can be easily integrated. The corresponding time-one map 
has the form $(T,E)\mapsto(T+1,E)$. It coincides with
$S^{-1}\circ \Phi^1_E\circ S $. In the original coordinates the map 
$\Phi_E^1=f_\varepsilon$. Consequently the first property of $S$
means that $S$ conjugates $f_\varepsilon$ and the translation:
\begin{equation}
(T+1,E)=S^{-1}\circ f_{\varepsilon }\circ S(T,E).  \label{EqnS}
\end{equation}
Let us use $(T(x,y),E(x,y))$ for the components of $S^{-1}$
and $(x(T,E),y(T,E))$ for the components of $S$. 
Equation (\ref{EqnS}) implies
\begin{eqnarray}
T\circ f_\varepsilon(x,y)&=&T(x,y)+1\,,\label{Eq:T1}
\\
E\circ f_\varepsilon(x,y)&=&E(x,y)\,.\label{Eq:E1}
\end{eqnarray}
By the inverse function theorem, the Jacobian matrix $DS^{-1}=(DS)^{-1}$.
In coordinate form this equality implies that
\begin{equation*}
\left( 
\begin{array}{cc}
\frac{\partial T}{\partial x} & \frac{\partial T}{\partial y} \\[6pt] 
\frac{\partial E}{\partial x} & \frac{\partial E}{\partial y}%
\end{array}%
\right) =\left( 
\begin{array}{cc}
\frac{\partial x}{\partial T} & \frac{\partial x}{\partial E} \\[6pt] 
\frac{\partial y}{\partial T} & \frac{\partial y}{\partial E}
\end{array}%
\right) ^{-1}=\left( 
\begin{array}{cc}
\frac{\partial y}{\partial E} & -\frac{\partial x}{\partial E} \\[6pt] 
-\frac{\partial y}{\partial T} & \frac{\partial x}{\partial T}%
\end{array}%
\right) ,
\end{equation*}
where we used that the Jacobian matrix has the unit determinant
to get the second equality.
The second row gives the following Hamiltonian equations:
\[
\frac{\partial y}{\partial T}
=-
\frac{\partial E}{\partial x}
\qquad\mbox{and}\qquad
\frac{\partial x}{\partial T}
=
\frac{\partial E}{\partial y}\,.
\]
Substituting $E=0$ into the equations,
using the normalisation condition in the form
\[
\left(x(T,0),y(T,0)\right) =\psi ^{-}(T\log\lambda_\delta)
=(\psi_1 ^{-}(T\log\lambda_\delta),\psi_2 ^{-}(T\log\lambda_\delta))
\]
and switching to derivation with respect to $t$ 
(where $t=T\log\lambda_\delta$) 
we get
\begin{eqnarray*}
\left.
 \frac{\partial E}{\partial x}
\right\vert _{\psi ^{-}}&=&
-\log\lambda_\delta\,
\dot \psi _{2}^{-}\,,
\\
\left.
 \frac{\partial E}{\partial y}
\right\vert _{\psi ^{-}}&=&
\log\lambda_\delta\,
\dot \psi _{1}^{-}\,.
\end{eqnarray*}
The last equation has the form of a Hamiltonian system with the Hamiltonian $E(x,y)$
after a time rescaling. For future reference we write this identity
using the vector notation:
\begin{equation}\label{Eq:nablaE}
\nabla E(\psi^-)=-
\log\lambda_\delta \,{\mathrm J}\dot\psi^-
\end{equation}
where ${\mathrm J}=\left(\begin{array}{rr} 0&1\\-1&0\end{array}\right)$
is the standard symplectic matrix.

\begin{remark} The following facts are not directly used in the proof.
It is easy to see that equation (\ref{EqnS}) alone
does not define $S$ uniquely. Indeed, 
consider a substitution $(T,E)\mapsto (\widetilde{T},\widetilde{E})$
of the form
\begin{equation*}
(\widetilde{T},\widetilde{E})=(T+a(T,E),b(T,E))
\end{equation*}%
where $a$ and $b$ are $1$-periodic in $T$.
A map $\tilde S$ obtained as a result of this substitution
also satisfy (\ref{EqnS}). Although the uniqueness
is not important for our study, we note that the substitutions of
this form exhaustively describe the freedom in the definition of $S$.

The normalisation property of Theorem~\ref{Thm:flowbox} 
does not eliminate this freedom completely but 
only fixes $S$ on the line $E=0$ in a way which makes comparison
between the stable and unstable separatrices more convenient.
\end{remark}

\subsection{Splitting function\label{Se:splitfunc}}

Let us define the splitting function by%
\footnote{The domain of this function is to be studied. } 
\begin{equation}\label{defTheta}
\Theta (T)=E\circ \psi ^{+}(T\log \lambda_\delta ).  
\end{equation}
We can use $\Theta $ to

\begin{enumerate}
\item Find the homoclinic points.

\item Find the homoclinic invariant $\omega$.

\item Calculate the lobe areas.

\item Calculate the distances between $\psi ^{+}$ and $\psi ^{-}$.
\end{enumerate}

Since $E=0$ corresponds to the unstable separatrix,
zeroes of the splitting functions correspond to
homoclinic points, i.e.,
if $\Theta(T_1)=0$ for some $T_1$ then $\psi^{+}(t_1)\in W^-$ for $t_1=T_1\log\lambda$.
Therefore there is $t_2$ such that $\psi^+(t_1)=\psi^-(t_2)$.

\medskip

Let us consider the derivative of the splitting function
with respect to $T$ at $T=T_1$:
\begin{equation*}
\frac{d\Theta }{dT}(T_1)
= 
\left.\nabla  E^t\right\vert _{\psi ^{+}(t_1)} 
\frac{d \psi^{+}}{d T}(T_1\log \lambda ) 
=
\log \lambda \,
\left.\nabla  E^t\right\vert _{\psi ^{-}(t_2)} 
\dot{\psi}^{+}(t_1) \,,
\end{equation*}
where we used the equality $\psi^+(t_1)=\psi^-(t_2)$. We use (\ref{Eq:nablaE})
to eliminate the gradient of~$E$:
\begin{equation*}
\frac{d\Theta }{dT}(T_1)=- \log^2\! \lambda \,
({\mathrm J}\dot\psi^-)^t
\dot{\psi}^{+}
=
\log^2\! \lambda \,
(\dot\psi^-)^t{\mathrm J}\dot{\psi}^{+}
\end{equation*}
since $\mathrm{J}^t=-\mathrm{J}$. Taking into account the definition of the symplectic
form we get
\begin{equation*}
\frac{d\Theta }{dT}(T_1)=- \log^2\! \lambda\, \Omega\left(\dot\psi^-,\dot{\psi}^{+}\right).
\end{equation*}
We remind ourselves that the homoclinic invariant, also known as the Lazutkin invariant, is defined by
\begin{equation*}
\omega =\Omega (\dot{\psi}^{+},\dot{\psi}^{-}).
\end{equation*}
Therefore we have established that the homoclinic invariant is proportional to
the gradient of the splitting function 
\begin{equation}\label{Eq:omegaTheta}
\omega =\frac{1}{\log ^{2}\lambda }\;\Theta'(T_{1})
\end{equation}
at its zero, $\Theta (T_{1})=0$.

\medskip

Next, we note that $\Theta (T)$ is periodic. Indeed using the definition (\ref{defTheta}),
equation (\ref{Eq:mainfde}) with $t=T\log\lambda $ and the equality (\ref{Eq:E1})
we obtain
\begin{eqnarray*}
\Theta (T+1) &=&E\circ \psi ^{+}(\left( T+1\right) \log \lambda )=E\circ
f_{\varepsilon }\circ \psi ^{+}(T\log \lambda ) \\
&=&E\circ \psi ^{+}(T\log \lambda )=\Theta (T)\,.
\end{eqnarray*}%
Hence we can write the Fourier series
\begin{equation*}
\Theta (T)=\sum_{k\in\mathbb Z} \Theta _{k}e^{2\pi i k T}.
\end{equation*}

\begin{lemma} Let $c_1>0$ be fixed
and let $\rho=\pi-c_1$. If $\delta$ is sufficiently small,
the domain of the function $\Theta$ contains the strip
$$
\{\,T: |\Im T|\le \rho/\log\lambda_\delta\,\}\,.
$$
Moreover, for all $s\in\mathbb R$
\begin{equation}\label{Eq:thetareal}
\Theta(s)= \Theta_1e^{2\pi i s}+\Theta_{1}^*e^{-2\pi i s}+
O(e^{-4\pi\rho/\log\lambda_\delta})
\end{equation}
where  $\Theta_{1}\equiv\Theta_{1}(\delta)$ is a first
Fourier coefficient of $\Theta$.
The asymptotic formula {\rm (\ref{Eq:thetareal})} can be differentiated
with respect to $s$, the error term does not change its order.
\end{lemma}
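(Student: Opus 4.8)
The plan is to combine three ingredients already available from the overview: the periodicity of $\Theta(T)$ (established just above), the fact that $\psi^+$ is well approximated by the interpolating-flow separatrix $\varphi_\delta^{2n}$ in $\mathcal T_1^+(\varepsilon)$ and by the matching series $\sum\delta^{2k}\psi_k^+$ in $\mathcal T_2^+(\varepsilon)$, and the boundedness and regularity of $S^{-1}$ from the flow box theorem (Theorem~\ref{Thm:flowbox}). First I would delimit the domain of $\Theta$. Since $\Theta(T)=E\circ\psi^+(T\log\lambda_\delta)$, the function is defined wherever $\psi^+(T\log\lambda_\delta)$ lies in $\mathcal V_\delta$, the range of the flow box chart $S$. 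The flow box domain $\mathcal U_\delta$ covers $|\Im T|<\frac{\pi}{\log\lambda_\delta}-c_3$, and $\psi^+$ is, for small $\delta$, within the reach of $S$ on a horizontal strip that is only $O(\log\lambda_\delta)$ narrower than $|\Im t|<\pi$; taking $c_1$ (hence $\rho=\pi-c_1$) so that $\rho/\log\lambda_\delta<\frac{\pi}{\log\lambda_\delta}-c_3$ for small $\delta$, which holds once $c_1>c_3\log\lambda_\delta$, i.e.\ automatically for $\delta$ small since $\log\lambda_\delta\to0$, the strip $|\Im T|\le\rho/\log\lambda_\delta$ is contained in the domain of $\Theta$. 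On this strip $\psi^+$ stays in the region where all the earlier approximations (Theorem~\ref{Thm:FirstApprox_T1} for the stable manifold, equation~(\ref{Sum_Approx_in_T1p}), and Corollary~\ref{Cor:stappr}) apply.

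Next I would bound the Fourier coefficients. Because $\Theta$ is $1$-periodic and analytic on the strip $|\Im T|<\rho/\log\lambda_\delta$, and because $\|S^{-1}\|_{C^2}$ (hence $|E|$) is uniformly bounded by the flow box theorem, $\Theta$ is bounded on that strip, say $|\Theta(T)|\le M$ uniformly in $\delta$. The standard Cauchy estimate for Fourier coefficients on a strip of half-width $h=\rho/\log\lambda_\delta$ then gives
\[
|\Theta_k|\le M\,e^{-2\pi |k| h}=M\,e^{-2\pi|k|\rho/\log\lambda_\delta}
\]
for every $k\in\mathbb Z$. Splitting $\Theta(s)=\Theta_0+\Theta_1e^{2\pi is}+\Theta_1^*e^{-2\pi is}+\sum_{|k|\ge2}\Theta_ke^{2\pi iks}$ for real $s$, the tail is $O(e^{-4\pi\rho/\log\lambda_\delta})$, which is exactly the claimed error order. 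The term $\Theta_0$ must be shown to be of the same (or smaller) order: this follows because on the unstable separatrix $\Theta$ would vanish identically, and more precisely because $E\circ\psi^+$ equals $E\circ\psi^-$ plus the splitting, which by the upper bounds of the previous subsection, in particular~(\ref{Eq:dpsi}), is $O(\tau^4e^{-2\pi i\tau})$; averaging $\Theta$ over a period at height $\Im T=-\rho/(2\log\lambda_\delta)$ and using analyticity to shift the contour shows $|\Theta_0|=O(e^{-2\pi\rho/\log\lambda_\delta}\cdot e^{-2\pi\rho/\log\lambda_\delta})$, i.e.\ also absorbed into the error. (Alternatively, $\Theta_k=\Theta_{-k}^*$ by real-analyticity, and one shows the $k=0$ harmonic is pushed below the stated error by the same contour shift.) Finally, termwise differentiation of an absolutely convergent Fourier series on a strip is legitimate, and differentiating multiplies $\Theta_k$ by $2\pi ik$, which changes the tail bound only by a polynomial factor $|k|$ that is absorbed by shrinking the exponent from $4\pi\rho$ to, say, $(4\pi-\epsilon)\rho$, or simply by using the half-width $\rho'/\log\lambda_\delta$ with $\rho<\rho'<\pi-c_1+c_1/2$; hence the differentiated formula holds with an error of the same exponential order.

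The main obstacle is verifying that $\Theta$ is genuinely analytic and uniformly bounded on the full strip $|\Im T|\le\rho/\log\lambda_\delta$, i.e.\ that $\psi^+(T\log\lambda_\delta)$ really stays inside $\mathcal V_\delta=S(\mathcal U_\delta)$ there. This requires patching the domain $\mathcal T_1^+(\varepsilon)$ (where $\psi^+$ is controlled and close to $\varphi_\delta^{2n}$) with the flow box domain $\mathcal V_\delta$ near $\Im t=\pm\pi$, and checking that the horizontal segment $\Im T=\pm\rho/\log\lambda_\delta$ lies in the overlap; this is where the constants $c_1,c_2,c_3$ and the shape of $\mathcal T_1^+(\varepsilon)$ (the "half-plane minus shadowed disks" of Figure~\ref{Fig:T1u}) have to be reconciled. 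Once the strip is inside the common domain, the boundedness is immediate from $\|S^{-1}\|_{C^2}=O(1)$, and the rest is the routine Cauchy/Fourier estimate sketched above.
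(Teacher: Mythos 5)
Your proposal correctly identifies the outline (domain delimitation via the flow box, Cauchy/Fourier estimates from analyticity on the strip, term-by-term differentiation), and the first two ingredients match the paper. The gap is in the bound for the zero Fourier coefficient $\Theta_0$, which is the one genuinely non-trivial point of the lemma.

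Your proposed argument --- that $E\circ\psi^+=E\circ\psi^-$ plus the splitting, bound the splitting by~(\ref{Eq:dpsi}), then average at height $\Im T=-\rho/(2\log\lambda_\delta)$ and shift the contour --- does not give the stated order. Shifting the contour and averaging only yields $|\Theta_0|\le\sup_{\Im T=h}|\Theta|$ for whatever height $h$ you choose. On the lines where the splitting estimate~(\ref{Eq:dpsi}) is valid (i.e.\ $-c_2>\Im\tau>-\delta^{-1/2}$), that supremum is at best a \emph{single} exponential factor, and near the edge of the strip $\Im T=\pm\rho/\log\lambda_\delta$ the corresponding $\Im\tau$ is of order $-1/\delta$, which is outside the region where~(\ref{Eq:dpsi}) applies at all; there only the crude bound $|\Theta|\le E_0(\delta)$ from the flow box theorem is available. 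In no case does a single contour shift produce the product of two exponentials that would be needed to reach $O(e^{-4\pi\rho/\log\lambda_\delta})$. The assertion ``$|\Theta_0|=O(e^{-2\pi\rho/\log\lambda_\delta}\cdot e^{-2\pi\rho/\log\lambda_\delta})$'' therefore does not follow from the estimates you cite.

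The paper's treatment of $\Theta_0$ is genuinely different and is what makes the lemma work. It introduces the auxiliary function $g(s)=T\circ\psi^+(s\log\lambda)$ describing the $T$-coordinate of the stable separatrix, shows $g(s+1)=g(s)+1$ so that $g'$ is $1$-periodic with mean $1$, and then invokes a symplectic-geometric identity: the closed loop formed by segments of $W^+$ and $W^-$ between a homoclinic point $\mathbf p_1$ and $f_\varepsilon(\mathbf p_1)$ has zero algebraic area, which in flow-box coordinates reads
\begin{equation*}
\int_{T_1}^{T_1+1}\Theta(s)\,g'(s)\,ds=0.
\end{equation*}
Combining this with $\int_{T_1}^{T_1+1}(1-g'(s))\,ds=0$ yields $\Theta_0=\int_{T_1}^{T_1+1}(\Theta(s)-\Theta_0)(1-g'(s))\,ds$, a period integral of a product of two periodic, zero-mean functions. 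Each of these is $O(e^{-2\pi\rho/\log\lambda_\delta})$ by the Fourier tail bound already established, so their product gives precisely the doubled exponent $O(e^{-4\pi\rho/\log\lambda_\delta})$. This area-preservation argument is the idea missing from your proof, and it cannot be replaced by contour shifting on $\Theta$ alone.
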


\begin{proof}
We note that on the strip the absolute value of $\Theta(T)$  
does not exceed $E_0(\delta)$ defined in (\ref{DomainS}).
Writing the classical integrals for Fourier
coefficients of a periodic function and shifting the integration path
to the boundary of the analyticity strip, we derive
\begin{equation}\label{Eq:kFourier}
\left\vert \Theta _{k}\right\vert \leq 
E_0(\delta) \mathrm e^{-\left\vert
k\right\vert \frac{2\pi \rho }{\log\lambda_\delta}}.
\end{equation}
Since $\log\lambda_\delta$ is of the order of $\delta$ all Fourier coefficients
are exponentially small (the zero order coefficient being an exception).
Moreover, the coefficients with $|k|\ge2$ are negligible compared to
the size of the separatrix splitting we intend to detect.

Now we show that $\Theta_0$ is also negligible.
We define an auxiliary function  
$g(s)=T\circ \psi ^{+}(s\log\lambda)$ which describes the $T$ component of the
stable separatrix in the flow box coordinates. 
Using the definition (\ref{defTheta}),
equation (\ref{Eq:mainfde}) with $t=T\log\lambda $ and the equality (\ref{Eq:T1})
we obtain
\begin{eqnarray*}
g(s+1)&=&
T\circ \psi ^{+}((s+1)\log\lambda_\delta)=T\circ f_{\varepsilon }\circ \psi
^{+}(s\log\lambda_\delta)\\
&=&T\circ \psi ^{+}(s\log\lambda_\delta)+1=g(s)+1.
\end{eqnarray*}
Consequently $g'$ is periodic and its mean value is 1
\begin{equation}\label{Eq:gp0}
\int_{T_1}^{T_1+1}(1-g'(s))ds=0
\end{equation}
for any $T_1$ from its domain.

Let $T_1$ be a zero of the function $\Theta$.
Then $\mathbf p_1=\psi^+(t_1)=\psi^-(t_2)$ is an homoclinic
point. The segments of the stable and unstable separatrices
with their ends at $\mathbf p_1$ and $f_\varepsilon(\mathbf p_1)$
form a closed loop on the plane. The total algebraic area of this loop
vanishes. Since $S$ is symplectic, this area can be evaluated in
the flow box coordinates. The segment of the unstable separatrix
is a straight line which connects the points $(T_2,0)$ and $(T_2+1,0)$
and therefore the area is given by the integral
\begin{equation}\label{Eq:area0}
\int_{T_1}^{T_1+1}\Theta (s)g'(s)ds=
\int_{T_1}^{T_1+1}\Theta (s)dg(s)
=0.
\end{equation}

\begin{figure}
\begin{center}
{\includegraphics[width=0.7\textwidth]{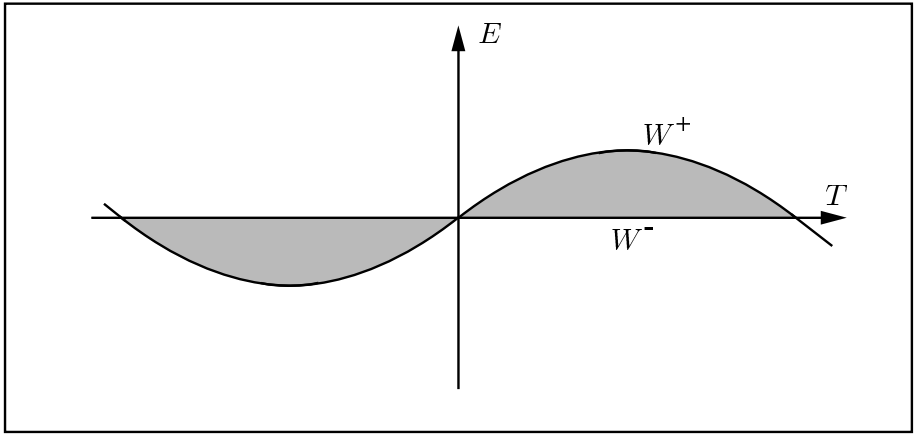}}
\end{center}
\caption{Unstable separatrix is the "x-axis" and the Stable one is a graph
over it; The area is zero since the map is area-preserving.}
\end{figure}

Now we can come back to estimates for the zero order Fourier coefficient
of the function $\Theta$ which is given by the integral
\[
\Theta_0=\int_{T_1}^{T_1+1}\Theta(s)ds\,.
\]
Taking into account (\ref{Eq:area0}) we rewrite it
\[
\Theta_0=\int_{T_1}^{T_1+1}\Theta(s)(1-g'(s))ds\,.
\]
Then using equation (\ref{Eq:gp0}) and the fact that $\Theta_0$ is constant
we get:
\[
\Theta_0=\int_{T_1}^{T_1+1}(\Theta(s)-\Theta_0)(1-g'(s))ds\,.
\]
Under the integral we see the product of two functions, each one
is periodic and of zero mean. The Fourier series arguments show that
\begin{equation}\label{Eq:expbounds}
\Theta(s)-\Theta_0=
O(e^{-2\pi\rho/\log\lambda_\delta})
\qquad \mbox{and}\qquad
1-g'(s)=
O(e^{-2\pi\rho/\log\lambda_\delta})
\end{equation}
since $s$ is real.
Consequently, $\Theta_0$ is extremely small:
\begin{equation*}
\Theta_0=O(e^{-4\pi\rho/\log\lambda_\delta}),
\end{equation*}
as the exponent in the right hand side is almost double
the expected leading order of the separatrices splitting.

In particular, combining the last upper bound
with (\ref{Eq:kFourier}) we conclude that on the real axis
\begin{equation}\label{Eq:ThetaBound}
|\Theta(s)|\le\sum_{k\in\mathbb Z}|\Theta_k|=O(e^{-2\pi\rho/\log\lambda_\delta})\,.
\end{equation}
Moreover, this function is almost sinusoidal since
if we keep apart the two largest Fourier coefficients
the sum of all others is very small and for $s\in\mathbb R$
we get (\ref{Eq:thetareal}).

Of course the  Fourier coefficients $\Theta_1$ and $\Theta_{-1}=\Theta_{1}^{\ast}$ 
depend on $\delta$ and are exponentially small 
due to the upper bound (\ref{Eq:kFourier}) with $|k|=1$. 
Nevertheless the the error term is much smaller than the upper bound
for $\Theta_{\pm1}$. In the next section we will construct
an asymptotic expansion for $\Theta_{-1}$ and conclude that
generically it really dominates the error. 
\end{proof}

Finally we show how to use the splitting function to estimate 
the lobe area between the stable and unstable manifolds.
Suppose $T_1$ and $T_2$ are two zeroes of $\Theta$.
Then the lobe area is represented by the following integral:
\[
\int_{T_1}^{T_2}\Theta(s)g'(s)ds=
\int_{T_1}^{T_2}\Theta(s)ds
+\int_{T_1}^{T_2}\Theta(s)(g'(s)-1)ds\,.
\]
Taking into account the upper bound (\ref{Eq:expbounds})
and (\ref{Eq:ThetaBound}) for the functions under the second integral 
we conclude that the lobe area equals to
\[
\int_{T_1}^{T_2}\Theta(s)ds
+
O(e^{-4\pi\rho/\log\lambda_\delta})\,.
\]

\subsection{Asymptotic expansion for the homoclinic invariant}

\begin{figure}
\begin{center}
\includegraphics[width=0.7\textwidth]{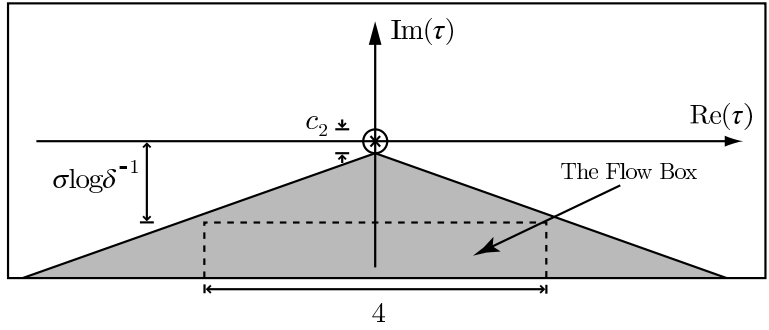}
\end{center}
\caption{Domain used to estimate the first Fourier
coefficient of the splitting function}
\end{figure}

Consider a line in the complex plane of the variable $T$ defined by
\[
\ell(\delta)=\left\{\,\Im T=\rho_1(\delta)\,,
\ 
|\Re T|\le1\,\right\}\,,
\]
where 
\[
\rho_1(\delta)=\frac{\pi}{\log\lambda_\delta}-\sigma\log\delta^{-1}
\]
and $\sigma=\frac n\pi$ is a positive constant.
We fix the following relation between the parameters: $t=T\log\lambda_\delta$ and
\begin{equation}
\label{Eq:Tttau}
\tau =\frac{t-i\pi }{\log \lambda_\delta }=T - \frac{i\pi }{\log \lambda_\delta }.
\end{equation}
We estimate the splitting function defined in (\ref{defTheta})
by expanding the energy $E$ in Taylor series centred at
a point $\psi^-$ of the unstable manifold. 
Using $E\circ\psi^-=0$, equation (\ref{Eq:nablaE}) 
for the gradient of $E$ and the uniform boundedness of the second derivatives
of $E$ we find that
\begin{equation*}
\Theta (T)=\log \lambda_\delta \,\det \left( \frac{d \psi ^{-}}{dt};
\psi ^{-}-\psi ^{+}\right) +O\left(|\psi ^{-}-\psi ^{+}|^2\right)\,.
\end{equation*}
The functions in the right hand side are evaluated at a point $\tau $ with $%
\func{Im}\tau =-\sigma \log \delta ^{-1}.$ Using (\ref{Eq:dpsi'}) with $n$
replaced by $2n$ gives us the following estimate for the difference of $\psi^{\pm}$%
\[
\psi ^{+}-\psi ^{-}=\sum_{k=0}^{2n-1}\delta ^{2k}(\psi _{k}^{+}-\psi
_{k}^{-})+O(\delta ^{4n}\log ^{4n}\delta ^{-1}).
\]%
We note that on $\ell(\delta)$ 
\begin{equation}
\psi_{k}^{+}-\psi _{k}^{-}=O(\delta ^{2n}\log^{m_{k}}\delta^{-1})
\label{Bound_deltasep}
\end{equation}
by the upper bound (\ref{Eq:dpsik}), hence it follows that%
\begin{eqnarray*}
\psi ^{+}-\psi ^{-}&=&\sum_{k=0}^{n-1}\delta ^{2k}(\psi _{k}^{+}-\psi
_{k}^{-})+O(\delta ^{4n}\log ^{m}\delta ^{-1})\qquad{and}\\
\psi ^{+}-\psi ^{-}&=&O(\delta ^{2n}\log ^{m}\delta ^{-1}),
\end{eqnarray*}
where $m=\max \{4n,m_{0},\ldots ,m_{2n-1}\}$. 
On a sufficiently large neighbourhood of $\ell(\delta)$
we have $|\psi^{-}(\tau)|<1$, therefore the Cauchy estimate implies
$\left\vert \frac{d\psi ^{-}}{d \tau }\right\vert <1$ and
consequently
\begin{eqnarray*}
\Theta (T) &=&\det \left( \frac{d\psi ^{-}}{d\tau },\psi ^{-}-\psi
^{+}\right) +O({|\psi^{-}-\psi^{+}|^2}) \\
&=&\det \left( \frac{d\psi ^{-}}{d\tau },\sum_{k=0}^{n-1}\delta ^{2k}(\psi
_{k}^{+}-\psi _{k}^{-})\right) +O(\delta ^{4n}\log ^{2m}\delta ^{-1}).
\end{eqnarray*}%
Now, since $\psi^{-}$ is analytic the series (\ref{Eq:unstappr}) can be 
differentiated term-wise, so using (\ref%
{Bound_deltasep}) we obtain%
\[
\Theta (T)=\det \left( \sum_{k=0}^{n-1}\delta ^{2n}
\frac{d\psi_{k}^{-}}{d\tau}
,\sum_{k=0}^{n-1}\delta ^{2k}(\psi _{k}^{+}-\psi _{k}^{-})\right)
+O(\delta ^{4n}\log ^{\widetilde{m}}\delta ^{-1}),
\]%
where $\widetilde{m}=\max \left\{2 m,(2n-1)m_{0}\right\} .$ We use that the
determinant is bi-linear to rewrite this estimate in the following form%
\begin{equation}
\Theta (T)=\sum_{k_{1}=0}^{n-1}\sum_{k_{2}=0}^{n-1}\delta
^{2(k_{1}+k_{2})}\det \left( \frac{d\psi _{k_{1}}^{-}}{d\tau },\psi
_{k_{2}}^{-}-\psi _{k_{2}}^{+}\right) +O(\delta ^{4n}\log ^{\widetilde{m}%
}\delta ^{-1}).  \label{theta_doublesum}
\end{equation}%
Next we estimate the derivative of $\psi_{k_1}^{-}$. Since it is an analytic 
function in a small domain $\mathcal D$ we can use the Cauchy estimate 
to estimate it using $\psi_{k_1}^{-}$ itself. Using that $\psi_{k_1}^{-}$ is 
asymptotic to, see (\ref{Eq:psim'}), the series given by (\ref{Eq:psim_hat}) yields
\[
\left\vert \frac{d\psi _{k_{1}}^{-}}{d\tau }\right\vert =O(\tau
^{m_{k_{1}}})=O(\log ^{m_{k_{1}}}\delta ^{-1})
\]%
which together with the estimate (\ref{Bound_deltasep}) gives%
\[
\det \left( \frac{d\psi _{k_{1}}^{-}}{d\tau },\psi _{k_{2}}^{-}-\psi
_{k_{2}}^{+}\right) =O(\delta ^{2n}\log ^{m_{k_{1}}+m_{k_{2}}}\delta ^{-1}).
\]%
The last estimate shows that terms in (\ref{theta_doublesum}) with $%
k_{1}+k_{2}\geq n$ are not larger than the error term. We collect terms with 
$\left( k_{1}+k_{2}\right) <n$ to get%
\begin{equation}\label{eqn2_Theta}
\Theta (T)=\sum_{k=0}^{n-1}\delta ^{2k}\sum_{\substack{ k_{1}+k_{2}=k \\ %
k_{1},k_{2}\geq 0}}\det \left( \frac{d\psi _{k_{1}}^{-}}{d\tau },\psi
_{k_{2}}^{-}-\psi _{k_{2}}^{+}\right) +O(\delta ^{4n}\log ^{\widehat{m}%
}\delta ^{-1}),
\end{equation}%
where $\widehat{m}=\max_{k_{1}+k_{2}=n}\{\widetilde{m},m_{k_{1}}+m_{k_{2}}\}$.

Let us introduce the auxiliary functions
\begin{equation}\label{Eq:thetak}
\theta _{k}(\tau )=\sum_{k_{1}+k_{2}=k}\det \left( \frac{d\psi _{k_{1}}}{%
d\tau }(\tau);\psi _{k_{2}}^{-}(\tau )-\psi _{k_{2}}^{+}(\tau )\right) .
\end{equation}
We note that these functions are independent from the parameter $\delta$.
The next lemma gives a more accurate estimate for these functions.

\begin{lemma}
For every integer $k\ge0$ 
there exist $\omega _{k}\in \mathbb{C}$ and $N_k\in\mathbb N$ such that%
\begin{equation}
\theta _{k}(\tau )=\omega _{k}e^{-2\pi i\tau }+O(\tau ^{N_{k}}e^{-4\pi i\tau
}),  \label{eqnTheta_k}
\end{equation}%
where $\tau \in \{-\pi +\beta _{1}<\arg \tau <-\beta _{1},$ $\func{Im}\tau <c\}$.
\end{lemma}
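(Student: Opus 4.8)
The plan is to prove the estimate \eqref{eqnTheta_k} by understanding the analytic structure of the individual functions $\psi^\pm_k$ near the singularity and then combining them through the bilinear determinant. The key observation is that each $\psi^\pm_k(\tau)$ satisfies the same linear (or, for $k=0$, nonlinear) difference equation \eqref{Eq:psi0}--\eqref{Eq:psim}, but is pinned down by asymptotic boundary conditions in two overlapping sectors: $\psi^-_k$ is asymptotic to $\hat\psi_k$ in the sector $\beta_0<\arg\tau<2\pi-\beta_0$, while $\psi^+_k$ is asymptotic to the \emph{same} formal series $\hat\psi_k$ in the sector $-\pi+\beta_0<\arg\tau<\pi-\beta_0$. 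Hence the difference $\psi^-_k-\psi^+_k$ is, by construction, a solution of the homogeneous variational equation $u(\tau+1)=Df_0(\psi_0(\tau))\,u(\tau)$ (for $k=0$ one works directly with the difference of the two solutions of \eqref{Eq:psi0}) whose asymptotic expansion vanishes in the overlap sector $\beta_0<\arg\tau<\pi-\beta_0$. Such a function is necessarily beyond-all-orders small there: a standard Fourier/Borel-type argument for difference equations (the ``Stokes phenomenon'' for these fixed-point problems, as in \cite{GN2008,GS01}) gives that $\psi^-_k-\psi^+_k$ decays like $e^{-2\pi i\tau}$ times an algebraic prefactor as $\func{Im}\tau\to-\infty$, with a well-defined leading coefficient. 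This is the source of the constants entering $\omega_k$.

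First I would make this precise. For $k=0$, the difference $\psi^-_0-\psi^+_0$ solves the variational equation along $\psi_0$; the monodromy of that equation around the singularity has a one-dimensional subspace of solutions that are $o(\tau^{-N})$ for all $N$ in the upper-overlap sector, and the classical Lazutkin-type argument (diagonalise the variational equation using $\dot\psi_0$ as one solution, produce a second solution by variation of constants, then expand the ``error'' in a convergent Fourier series in $e^{2\pi i\tau}$ valid for $\func{Im}\tau$ sufficiently negative) yields $\psi^-_0-\psi^+_0 = v_0(\tau)e^{-2\pi i\tau}+O(\tau^{N_0'}e^{-4\pi i\tau})$ where $v_0$ is a Laurent polynomial in $\tau$. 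For $k\ge1$ the equation is inhomogeneous but linear, with the inhomogeneity $R_k$ built from lower-order $\psi_j$ which already have such expansions by induction; feeding these into the difference equation and again passing to the Fourier expansion in $e^{2\pi i\tau}$ shows that $\psi^-_k-\psi^+_k$ has the form $(\text{Laurent polynomial in }\tau)\,e^{-2\pi i\tau}+O(\tau^{N_k'}e^{-4\pi i\tau})$ in the stated sector, with the Laurent polynomial of degree controlled by the growth exponents $m_j$ appearing in \eqref{Eq:dpsik}.

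Next, I would insert these expansions into the definition \eqref{Eq:thetak} of $\theta_k$. Each $d\psi_{k_1}/d\tau$ grows only polynomially, $O(\tau^{m_{k_1}})$, by the Cauchy estimate applied to the asymptotics \eqref{Eq:psim'}; and each $\psi^-_{k_2}-\psi^+_{k_2}$ is $(\text{poly in }\tau)\,e^{-2\pi i\tau}+O(\tau^{\ast}e^{-4\pi i\tau})$. The determinant is bilinear, so $\theta_k(\tau)$ becomes a finite sum of terms of the form $(\text{poly in }\tau)\,e^{-2\pi i\tau}$ plus $O(\tau^{N_k}e^{-4\pi i\tau})$, with $N_k=\max_{k_1+k_2=k}(m_{k_1}+N_{k_2}')$ or similar. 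The only remaining point is that the polynomial-times-$e^{-2\pi i\tau}$ part actually collapses to a \emph{constant} times $e^{-2\pi i\tau}$, i.e.\ $\theta_k$ has no $\tau^j e^{-2\pi i\tau}$ terms with $j\ge1$. This follows because $\theta_k$ is itself (essentially) a first-order difference of a period-$1$ quantity: $\theta_k$ is the order-$\delta^{2k}$ coefficient of $\Theta'(T)/\log^2\lambda$ up to the already-controlled error, and $\Theta$ is exactly $1$-periodic (as shown in the splitting-function subsection), so $\theta_k$ is $1$-periodic to all relevant orders; a $1$-periodic function whose only exponential of size $e^{-2\pi i\tau}$ carries a polynomial prefactor would fail periodicity unless that prefactor is constant. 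Making this periodicity argument rigorous at each order in $\delta$ — keeping track that the $\delta$-expansion of $\Theta$ and its term-by-term periodicity survive the truncation errors — is the step I expect to be the main obstacle; the alternative is a direct check that the $\tau$-polynomial coefficients cancel in the determinant using the structure of the variational equation, which is more computational but avoids subtleties about interchanging the $\delta$- and Fourier expansions. Either way, setting $\omega_k$ to be the resulting constant coefficient of $e^{-2\pi i\tau}$ and $N_k$ the degree bound from the $e^{-4\pi i\tau}$ remainder completes the proof.
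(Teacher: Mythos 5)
The paper states this lemma in the overview section but, as far as I can see, never actually supplies a proof in the later sections, so there is no paper argument to compare with; I assess the proposal on its own terms. Your steps 1 and 2 (Stokes-phenomenon structure of each $\Delta_k := \psi_k^- - \psi_k^+$ from the matched asymptotics in overlapping sectors, then bilinearity of the determinant) are the right opening moves. The genuine gap is exactly the step you flag as ``the main obstacle'': why does the prefactor of $e^{-2\pi i\tau}$ collapse to a constant? Your preferred route --- deducing order-by-order $1$-periodicity of $\theta_k$ from the exact periodicity of $\Theta$ --- does not close. The identity $\Theta(T)=\sum_{k<n}\delta^{2k}\theta_k(\tau)+O(\delta^{4n}\log^{\hat m}\delta^{-1})$ is only asymptotic, holds on a $\delta$-dependent line where each individual $\theta_k$ is of size $\sim\delta^{-2n}$, and the $\theta_k$ are not coefficients of a convergent series in $\delta$; passing exact periodicity through such an expansion, term by term, would need an argument that already knows the periodicity of $\theta_k$, which is circular.

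The missing ingredient is area-preservation, which your sketch never invokes. The sum $\sum_k\delta^{2k}\theta_k(\tau)$ is by definition the $\delta^2$-power-series expansion of the formal Wronskian $\hat W(\tau,\delta)=\det\bigl(\dot{\hat\psi}^-,\,\hat\psi^--\hat\psi^+\bigr)$ of the two formal solutions $\hat\psi^\pm=\sum_k\delta^{2k}\psi^\pm_k$ of $\hat\psi(\tau+1)=f_\varepsilon(\hat\psi(\tau))$. Since $\det Df_\varepsilon\equiv1$, Taylor's theorem gives
\[
\hat W(\tau+1)=\hat W(\tau)+\det\bigl(Df_\varepsilon(\hat\psi^-)\dot{\hat\psi}^-,\;Q\bigr),
\]
where $Q$ is the quadratic remainder, $O(|\hat\psi^--\hat\psi^+|^2)$ at each order. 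Taking the $\delta^{2k}$-coefficient shows $\theta_k(\tau+1)-\theta_k(\tau)$ is a finite sum of products containing at least two factors $\Delta_j$ and polynomially bounded factors, hence $O(\tau^{N_k'}e^{-4\pi i\tau})$. This is the clean, $\delta$-free periodicity statement you need, and area-preservation is what makes the $e^{-2\pi i\tau}$-sized contributions cancel exactly. One further point your sketch glosses over: after writing the $e^{-2\pi i\tau}$-part of $\theta_k$ as $P_k(\tau)e^{-2\pi i\tau}$, the relation $P_k(\tau+1)-P_k(\tau)=O(\tau^\ast e^{-2\pi i\tau})$ must kill not only the growing polynomial part of $P_k$ (as you note) but also any $\tau^{-j}$ tail, since $(\tau+1)^{-j}-\tau^{-j}=O(\tau^{-j-1})$ is not exponentially small either. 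The standard fix is the substitution $z=e^{2\pi i\tau}$: a $1$-periodic analytic function of polynomial growth on $\{\Im\tau<c\}$ becomes a function on a punctured disc bounded by a power of $\log|z|^{-1}$, hence removable at $z=0$, giving $P_k(\tau)=\omega_k+O(e^{-2\pi i\tau})$ and completing the proof.
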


According to the lemma we have 
\begin{equation}
\theta _{k}(\tau )=\omega _{k}e^{-2\pi i\tau}
+O(\delta ^{4n}\log ^{N_{k}}\delta ^{-1})
\end{equation}
on the line $\ell(\delta )$.
Substituting this and (\ref{Eq:thetak}) into equation (\ref{eqn2_Theta}), 
we obtain
\[
\Theta (T)=\sum_{k=0}^{n-1}\delta ^{2k}\omega _{k}e^{-2\pi i\tau }+O(\delta
^{4n}\log ^{M}\delta ^{-1}),
\]%
where $M=\max \{m,N_{0}\}$.
Now we can
evaluate the first Fourier coefficients of $\Theta$:
\begin{eqnarray*}
\Theta_{-1}&=&\int_{i\rho_1(\delta)}^{1+i\rho_1(\delta)}e^{2\pi i T}\Theta (T)\,dT
\\&=&
e^{-\frac{2\pi^2}{\log\lambda_\delta}}
\int_{-i \sigma\log\delta^{-1}}^{1-i \sigma\log\delta^{-1}} e^{2\pi i\tau}\left(
\sum_{k=0}^{n-1}\delta ^{2k}\omega _{k}e^{-2\pi i\tau }
+
O\left(\delta^{4n}\log ^{M}\delta ^{-1}\right)
\right)
d\tau\,,
\end{eqnarray*}
where the exponential factor comes from the change of the variables
(\ref{Eq:Tttau}). Expanding the parenthesis and taking into account that 
$|e^{2{\pi}i{\tau}}|=\delta^{-2n}$ on the integration path we get 
\begin{equation*}
\Theta _{-1}=\left( \sum_{k=0}^{n-1}\delta ^{2k}\omega _{k}+
O\left(\delta^{2n}\log ^{M}\delta ^{-1}\right)
\right) e^{-\frac{2\pi ^{2}}{\log \lambda }}\,.
\end{equation*}
With this bound on the first Fourier coefficient we are now ready to 
estimate the size of $\Theta(t)$ for $t$ real. In particular, if $s\in \mathbb{R}$ then
(\ref{Eq:thetareal}) with $\rho=\frac34\pi$ implies
\begin{eqnarray*}
\Theta (s) &=&\Theta _{-1}e^{-2\pi i s}+\Theta_{-1}^{\ast }e^{2\pi i s} +
O\left(e^{-\frac{3\pi^2}{\log\lambda_\delta}}\right)
\\
&= &\sum_{k=0}^{n-1}\delta ^{2k}\left( \omega _{k}e^{-2\pi i s}
+\omega _{k}^{\ast }e^{2\pi i s}\right)
e^{-\frac{2\pi ^{2}}{\log \lambda }}+O\left(\delta ^{2n}e^{-\frac{2\pi ^{2}}{\log
\lambda }}\right) \,.
\end{eqnarray*}
There are two cases to consider. If $\omega_k$ vanish for all $k$, this formula
simply gives an upper bound for the derivative of $\Theta$ and, as a result,
for the homoclinic invariant of any primary homoclinic trajectory.
If, on the other hand, some $\omega_{k}\neq{0}$ the leading term is larger than 
the error. In this case  it is more convenient to rewrite the formula in the form 
\begin{equation*}
\Theta (s)=\sum_{k=0}^{n-1}a_{k}\delta ^{2k}\cos \left( 2\pi s
+\sum_{k=0}^{n-1}\varphi _{k}\delta ^{2k}\right) e^{\frac{-2\pi ^{2}%
}{\log \lambda }}+O\left(\delta ^{2n}e^{-\frac{2\pi ^{2}}{\log \lambda }}\right),
\end{equation*}%
where $a_{0}=\left\vert \omega _{0}\right\vert $ and at least one of the amplitudes
does not vanish. The implicit function theorem then implies that the 
function $\Theta$ has exactly two zeroes per period. The 
derivative of $\Theta$ at the zeroes is 
$\pm 2\pi e^{\frac{-2\pi ^{2}}{\log \lambda }}\sum_{k=0}^{n-1}a_{k}\delta ^{2k}$.
Therefore there are exactly two primary homoclinic orbits and
the relation (\ref{Eq:omegaTheta}) implies our main result:
\begin{equation*}
\omega =\pm \frac{2\pi }{\log \lambda ^{2}}
\sum_{k=0}^{n-1}a_{k}\delta ^{2k}%
e^{-\frac{2\pi ^{2}}{\log \lambda }}
+O\left(\delta ^{2n}e^{-\frac{2\pi ^{2}}{\log \lambda }}\right).
\end{equation*}

\section{Normal form for the bifurcation\label{Se:normalform}}

\subsection{Formal series and quasi-homogeneous polynomials}

In this section we will mainly be interested in transformations given in the form
of formal power series. We will consider the series in powers of 
the space variables $(x,y)$ combined with expansions in the parameter $\varepsilon$. 
The series have the form
\[
g(x,y,\varepsilon)=\sum_{k,l,m}c_{klm}x^ky^l\varepsilon^m\,.
\]
A formal series is treated as a collection of coefficients.
Formal series form an infinite dimensional vector space.
Addition, multiplication, integration
and differentiation are defined in a way compatible with the common
definition on the subset of convergent series.

The series involve several variables and it is convenient
to group terms which are ``of the same order". A usual 
choice is to consider $x$ and $y$ to be of the same order.
But for purpose of this paper it is much more convenient
to assume
\begin{itemize}
\item $x$ is of order 2;
\item $y$ is of order 3;
\item $\varepsilon$ is of order 4.
\end{itemize}
Then a monomial $x^ky^l\varepsilon^m$ is considered to be
of order $2k+3l+4m$. We can write
\[
g(x,y,\varepsilon)=\sum_{p\ge 0} g_p(x,y,\varepsilon)\qquad\mbox{where}\qquad
g_p(x,y,\varepsilon)=\sum_{2k+3l+4m=p}c_{klm}x^ky^l\varepsilon^m
\]
is a quasi-homogeneous polynomial of order $p$.
We stress that this notation does not refer 
to a resummation of the divergent series but simply indicates the order
in which  the coefficients are to be treated.

In order to give a rigorous background for manipulation with
formal series we define a metric on the space of formal series ${\mathfrak H}$. 
Let $g$ and $\tilde g$ be two formal
series and let $p$ denote the lowest (quasi-homogeneous) 
order of $g-\tilde g$. If $g\ne\tilde g$ then $p$ is
finite and we let
$$
d(g,\tilde g)=2^{-p},
$$
otherwise we assume 
$$
 d(g,g)=0\,.
$$
It is a straightforward to check that
$(\mathfrak H,d)$ is a complete metric space.
Moreover polynomials are dense in $(\mathfrak H,d)$.
Hence we can define formal convergence and formal continuity
on the space of formal series. In particular an operator
is formally continuous if each coefficient of a series
in its image is a function of a finite number of
coefficients of a series in its argument.

Let $\chi$ and $g$ be two formal power series. 
We note that any of the series
involved in next definitions may diverge.
The linear operator defined by the formula
\begin{equation}\label{Def_L}
L_{\chi }g=\left\{g, \chi\right\} 
\end{equation}
is called {\em the Lie derivative\/} generated by $\chi$.
We note that if $\chi$ starts from an order $p$ and $g$
starts with an order $q$, then the series $L_{\chi }g$
starts with the order $p+q-5$ as the Poisson bracket
involves differentiation with respect to $x$ and $y$. 
If $p\ge6$ the lowest order in $L_{\chi }g$ is at least $q+1$.

We define the exponent of $L_\chi$ by
\begin{equation}\label{Eq:formalexp}
\exp(L_\chi)g=\sum_{k\ge0}\frac1{k!}L_\chi^kg\,,
\end{equation}
where $L_\chi^k$ stands for the operator $L_\chi$
applied $k$ times. 
The lowest order in the series $L_\chi^kg$
is at least $q+k$ and consequently every coefficient
of $\exp(L_\chi)g$ depends only on a finite number of
coefficients of $\chi$ and $g$.

If the lowest order of $\chi$ is at least 6 the series
(\ref{Eq:formalexp}) converges with respect to the metric $d$,
i.e., each coefficient of the result is function of finite number
of coefficients of $\chi$ and $g$.

\medskip

We consider consider the formal series
\[
\Phi_\chi^1=\left(\exp(L_\chi) x,
\exp(L_\chi) y
\right).
\]
We say that $\Phi^1_\chi$ is a Lie series generated by the
formal Hamiltonian $\chi$. If $\chi$ is a polynomial
the series converge on a poly-disk and coincide with
a map which shifts points along trajectories of the 
Hamiltonian system with Hamiltonian function $\chi$.
For this reason sometimes we will call this series
a time one map of the Hamiltonian system $\chi$.

We note that it is easy to construct the formal series
for the inverse map:
\[\Phi_\chi^{-1}=\left(\exp(-L_\chi) x,
\exp(-L_\chi) y
\right)\,.
\]
Then $\Phi_{\chi}^1\circ\Phi_{\chi}^{-1}(x,y)=(x,y)$.
We also note that
\[
g\circ \Phi_\chi^1=\exp(L_\chi)g\,.
\]
These formulae are well known to be valid for convergent
series and can be extended onto ${\mathfrak H}$ due to the
density property.

\subsection{Formal interpolation\label{Se:formalinterpol}}

We study the following family of area-preserving maps
\begin{equation*}
F_{\varepsilon }:\left( 
\begin{array}{c}
x \\ 
y%
\end{array}%
\right) \mapsto \left( 
\begin{array}{c}
x+y+f(x,y,\varepsilon ) \\ 
y+g(x,y,\varepsilon )%
\end{array}%
\right) 
\end{equation*}
and the Taylor series of $f(x,y,0)$ and $g(x,y,0)$
do not contain constant and linear terms.
Since $F_{\varepsilon }$ is area-preserving we have%
\begin{equation*}
\det DF_{\varepsilon } =\left\vert 
\begin{array}{cc}
1 +\partial _{x}f & 1+\partial _{y}f \\ 
\partial _{x}g & 1 +\partial _{y}g%
\end{array}%
\right\vert  
=1
\end{equation*}
which is equivalent to
\begin{equation}
\partial _{x}f+\partial _{y}g+\left\{ f,g\right\}-\partial _{x}g\equiv0. 
 \label{Parabolic_ap}
\end{equation}
Next theorem states that $F_\varepsilon$ can be formally
interpolating by an autonomous Hamiltonian flow.

\begin{theorem}
Let $F_{\varepsilon }$ be family of area-preserving maps such that 
\begin{equation*}
DF_{0}(0)=\left(
\begin{array}{cc}
1 & 1 \\ 
0 & 1%
\end{array}%
\right) 
\end{equation*}%
then there exists a unique (up to adding a formal series
in powers $\varepsilon$ only) formal Hamiltonian 
$h_{\varepsilon }$ such that 
\begin{equation}\label{Eq:forint}
\Phi _{h_{\varepsilon }}^{1}=F_{\varepsilon }.
\end{equation}
\end{theorem}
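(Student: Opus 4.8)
The plan is to construct $h_\varepsilon$ order by order in the quasi-homogeneous grading, using the Lie-series machinery set up in the previous subsection. Write the sought Hamiltonian as $h_\varepsilon=\sum_{p\ge 6}h_p$ with $h_p$ quasi-homogeneous of order $p$, and build the interpolating map as an infinite composition of elementary time-one maps $\Phi^1_{h_p}$. Concretely, I would show that for each $n$ there is a partial Hamiltonian $h_\varepsilon^n=\sum_{p=6}^{n+5}h_p$ such that $F_\varepsilon=\Phi^1_{h_\varepsilon^n}+\hat O_{n+3,n+4}$, which is exactly the statement~\eqref{Eq:errorfi} that Section~\ref{Se:formalinterpol} promises; passing to the limit $n\to\infty$ in the complete metric space $(\mathfrak H,d)$ then yields~\eqref{Eq:forint}. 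The key algebraic point is the cohomological equation at each order. Comparing $F_\varepsilon$ with $\Phi^1_{h_\varepsilon^{n-1}}$, the discrepancy is a map of the form $\mathrm{id}+\hat O_{n+3,n+4}$ composed on the known part; writing the required correction as $\Phi^1_{h_{n+5}}$ and expanding $\exp(L_{h_{n+5}})$, only the linear term $L_{h_{n+5}}=\{\,\cdot\,,h_{n+5}\}$ contributes at the relevant order because, by the order count recalled in the excerpt, $L_\chi$ raises order by $p-5\ge 1$ when $\chi$ starts at order $\ge 6$. Thus at order $n+5$ one must solve $\{x,h_{n+5}\}=\partial_y h_{n+5}$ and $\{y,h_{n+5}\}=-\partial_x h_{n+5}$ to match the prescribed quasi-homogeneous vector field $\hat O_{n+3,n+4}$; since the leading linear part is the nilpotent shear $\begin{pmatrix}1&1\\0&1\end{pmatrix}$, the homological operator is $y\partial_x$ acting on quasi-homogeneous polynomials, and one checks it is surjective onto the space of admissible right-hand sides, with kernel exactly the functions of $\varepsilon$ alone — this gives both existence and the claimed uniqueness up to adding a series in $\varepsilon$.

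The one structural subtlety is that $F_\varepsilon$ is \emph{area-preserving}, encoded in~\eqref{Parabolic_ap}, and one must verify that this is precisely what makes the right-hand side of the homological equation lie in the image of the Hamiltonian operator at every order. I would argue this by induction: if $\Phi^1_{h_\varepsilon^{n-1}}$ is symplectic (which it is, being a composition of exact Lie series) and $F_\varepsilon$ is symplectic, then $(\Phi^1_{h_\varepsilon^{n-1}})^{-1}\circ F_\varepsilon$ is symplectic and close to the identity, so its leading deviation from $\mathrm{id}$ is a Hamiltonian vector field — equivalently, the divergence-type constraint~\eqref{Parabolic_ap} forces the order-$(n+3,n+4)$ part of the residual to be of the form $\mathrm J\nabla h_{n+5}$ for a quasi-homogeneous $h_{n+5}$. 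Concretely one shows the $1$-form dual to the residual vector field is closed (hence exact, on the polynomial level), which pins down $h_{n+5}$ up to a constant in $(x,y)$, i.e.\ up to a function of $\varepsilon$; that residual freedom is the source of the non-uniqueness in the statement.

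The main obstacle I anticipate is bookkeeping rather than conceptual: one has to be careful that composing the infinitely many factors $\Phi^1_{h_p}$ and then \emph{re-expressing} the product as a single time-one map $\Phi^1_{h_\varepsilon}$ is legitimate in $(\mathfrak H,d)$, and that the order estimates for $\exp(L_\chi)$ quoted in the excerpt genuinely control the error terms $\hat O_{n+3,n+4}$ uniformly as $n$ grows. This is where I would invoke the formal convergence criterion stated just above (each coefficient of $\exp(L_\chi)g$ depends on finitely many coefficients of $\chi,g$ when $\chi$ starts at order $\ge 6$) to guarantee that the inductive corrections never disturb previously fixed coefficients, so the sequence $h_\varepsilon^n$ is Cauchy and its limit $h_\varepsilon$ satisfies~\eqref{Eq:forint} coefficient-by-coefficient. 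The normal-form leading term then comes out automatically: solving the order-$6$ equation with the prescribed $Df_0(\mathbf 0)$ and the non-degeneracy of the first nonlinear coefficients reproduces $h_6=\tfrac{y^2}2+\tfrac{ax^3}3-bx\varepsilon$ as in~\eqref{Eq:h6non}.
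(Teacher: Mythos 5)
Your overall strategy is the paper's: expand both $F_\varepsilon$ and $\Phi^1_{h_\varepsilon}$ in quasi-homogeneous orders, match order by order, and at each step solve a homological equation whose solvability condition is a divergence constraint supplied by area-preservation, with non-uniqueness coming from the kernel of the Hamiltonian operator (functions of $\varepsilon$ only). Your way of verifying solvability — observe that $(\Phi^1_{h_\varepsilon^{n-1}})^{-1}\circ F_\varepsilon$ is a symplectic near-identity map, so its leading deviation from the identity is a Hamiltonian vector field — is a legitimate alternative to the paper's lemma, which instead directly shows that two area-preserving maps agreeing to order $(p-1,p)$ have residuals of the same divergence at order $(p,p+1)$, using the explicit identity \eqref{Parabolic_ap}. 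Both arguments prove the same thing; yours is slightly more conceptual, the paper's is slightly more computational.

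Two points in your write-up are muddled and would need to be cleaned up before the argument closes. First, you announce that you will "build the interpolating map as an infinite composition of elementary time-one maps $\Phi^1_{h_p}$," and in your last paragraph you worry about re-expressing that infinite product as a single $\Phi^1_{h_\varepsilon}$. That is not what the paper does and you do not need it: the composition $\Phi^1_{h_6}\circ\Phi^1_{h_7}\circ\cdots$ differs from $\Phi^1_{h_6+h_7+\cdots}$ by Lie-bracket terms, and reconciling the two is a genuine Baker--Campbell--Hausdorff exercise that is entirely avoidable. The paper works from the outset with a single formal Lie series $\Phi^1_{h_\varepsilon}=\exp(L_{h_\varepsilon})$, expands it using $L_{h_\varepsilon}=\sum_{s\ge 1}L_s$ where $L_s=\{\cdot\,,h_{s+5}\}$, and observes that at quasi-homogeneous order $(p,p+1)$ only the term $L_{p-2}(x,y)=(\partial_y h_{p+3},-\partial_x h_{p+3})$ is new while everything else is a polynomial expression in previously determined $h_6,\dots,h_{p+2}$. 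In the middle of your proposal you in fact do this (the partial-sum $h_\varepsilon^n$), so the "infinite composition" framing is an internal inconsistency, and the worry in your final paragraph is about an obstacle that the correct formulation never meets.

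Second, the remark that "the homological operator is $y\partial_x$" misidentifies what is being inverted. Here you are not doing normal-form simplification; you are solving for a Hamiltonian given its Hamiltonian vector field. The operator at each order is $h_{p+3}\mapsto \mathrm{J}\nabla h_{p+3}=(\partial_y h_{p+3},\,-\partial_x h_{p+3})$; its kernel among quasi-homogeneous polynomials in $(x,y,\varepsilon)$ is exactly the polynomials in $\varepsilon$ alone, and its image is exactly the divergence-free quasi-homogeneous vector fields, which is what the solvability argument feeds it. The operator $L_{h_6}\approx y\partial_x-\partial_x h_6\,\partial_y$ is the homological operator for the \emph{separate} Theorem~\ref{Thm:parabolic_simpl} (simplifying $h_\varepsilon$ by a canonical change of variables), not for this interpolation theorem, so the reference to the nilpotent linear part is off-target here. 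Once you replace "$y\partial_x$" by "$\mathrm{J}\nabla$" and drop the infinite-composition framing, your argument matches the paper's and is sound.
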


\begin{proof}
The Taylor series for $F_\varepsilon: (x,y)\mapsto (x_{1},y_{1})$ 
is written as a sum of quasi-homogeneous polynomials:
\begin{equation}
\left\{ 
\begin{array}{l}
x_{1}=x+ y+\sum\limits_{p\geq 4}f_{p}(x,y,\varepsilon ), \\ 
y_{1}=y+\sum\limits_{p\geq 4}g_{p}(x,y,\varepsilon ),
\end{array}
\right.   \label{Lesson7_1}
\end{equation}
where $f_{p}$ and $g_{p}$ are quasi-homogeneous polynomials: 
\begin{equation*}
\left\{ 
\begin{array}{c}
f_{p}(x,y,\varepsilon )=\sum\limits_{2k+3l+4m=p}f_{klm}x^{k}y^{l}\varepsilon
^{m}, \\[12pt]
g_{p}(x,y,\varepsilon )=\sum\limits_{2k+3l+4m=p}g_{klm}x^{k}y^{l}\varepsilon
^{m}.
\end{array}
\right. 
\end{equation*}
Similarly we look for $h_{\varepsilon }$ in the form of
a sum of quasi-homogeneous polynomials using
the same quasi-homogeneous ordering for terms:
\begin{equation*}
h_{\varepsilon }(x,y)=
\sum_{p\geq 6}h_{p}(x,y,\varepsilon)
\qquad\mbox{where}\quad
h_{p}(x,y,\varepsilon)=\sum_{2k+3l+4m=p}h_{klm}x^{k}y^{l}\varepsilon ^{m}.
\end{equation*}%
The time-$1$ map of this hamiltonian is given by
the Lie series
\begin{equation*}
\Phi _{h_{\varepsilon }}^{1}(x,y)=\left( 
\begin{array}{c}
x+L_{h_{\varepsilon }}x+\sum\limits_{k\geq 2}\frac{1}{k!}L_{h_{\varepsilon
}}^{k}x \\ 
y+L_{h_{\varepsilon }}y+\sum\limits_{k\geq 2}\frac{1}{k!}L_{h_{\varepsilon
}}^{k}y%
\end{array}%
\right) ,
\end{equation*}
where the operator $L_{h_{\varepsilon }}$ is defined by
\begin{equation*}
L_{h_{\varepsilon }}(\varphi )=\left\{ \varphi ,h_{\varepsilon }\right\} =%
\frac{\partial \varphi }{\partial x}\frac{\partial h_{\varepsilon }}{%
\partial y}-\frac{\partial \varphi }{\partial y}\frac{\partial
h_{\varepsilon }}{\partial x}.
\end{equation*}
Now we use induction to show that there is a formal Hamiltonian
$h_\varepsilon$ such that 
$\Phi _{h_{\varepsilon }}^{1}=F_{\varepsilon }$ at all orders. 
This is an equality between two formal series, i.e., all coefficients of two
series coincide. We note the first component of the series
starts with the quasi-homogeneous order 2 and the second one starts with 3.
Therefore it is convenient to consider an order $p$ in the first
component simultaneously with the order $p+1$ in the second one.
In this situation we say that we consider a term of the order $(p,p+1)$. 

Equality (\ref{Eq:forint}) is equivalent to the following infinite system 
\begin{eqnarray}
L_{h_{p+3}}x+\sum_{k\geq 2}\frac{1}{k!}\left[ L_{h_{\varepsilon }}^{k}x%
\right] _{p} &=&\left\{ 
\begin{array}{ll}
f_{p}\quad &\mbox{if }p\geq 4 \\ 
y\quad &\mbox{if }p=3%
\end{array}%
\right.   \label{Lesson7_3} \\
L_{h_{p+3}}y+\sum_{k\geq 2}\frac{1}{k!}\left[ L_{h_{\varepsilon }}^{k}y%
\right] _{p} &=&g_{p+1},  \label{Lesson7_3a} 
\end{eqnarray}
where $[\cdot]_p$ is used to denote terms of the quasi-homogeneous order $p$
of a formal series. We note that 
\begin{equation*}
\func{order}(L_{h_{k}}x)=k-3\qquad\mbox{and}\qquad
\func{order}(L_{h_{k}}y)=k-2\,.
\end{equation*}
First we check that the equations can be solved for $p=3$.
It is easy to see that 
\begin{eqnarray*}
L_{h_{6}}x &=&\frac{\partial h_{6}}{\partial y}\,,\\
L_{h_{6}}y &=&-\frac{\partial h_{6}}{\partial x}.
\end{eqnarray*}
Then the equations (\ref{Lesson7_3}), (\ref{Lesson7_3a})
take the form
\begin{eqnarray}
 \frac{\partial h_{6}}{\partial y}&=&y,  \label{Lesson7_4} \\
-\frac{\partial h_{6}}{\partial x} &=&g_{4}(x,y,\varepsilon ).  \notag
\end{eqnarray}
Since $g_4$ is a quasihomogeneous polynomial of order $4$ it is independent from $y$ which
is of order $3$. Consequently
\begin{equation*}
g_{4}(x,y,\varepsilon )=g_{4}(x,\varepsilon )\,.
\end{equation*}
Then system (\ref{Lesson7_4}) has a unique solution of 
the form
\begin{equation*}
h_{6}=\frac{y^{2}}{2}+G_{6}(x,\varepsilon ),
\end{equation*}
where $G_6$ is a quasi-homogeneous polynomial 
of order $6$ in $x$ and $\varepsilon$.

Let us now proceed with the induction step. Assume we have 
$h_{6},\ldots,h_{p+2}$ and want to compute $h_{p+3}$ for $p>3$. 
To make the combinatorics easier
we introduce the notation
\begin{equation*}
L_{s}\varphi =\left\{ \varphi ,h_{s+5}\right\} .
\end{equation*}
Then%
\begin{equation*}
L_{h_{\varepsilon }}\varphi =\left\{ \varphi ,h_{\varepsilon }\right\}
=\sum_{p\geq 6}\left\{ \varphi ,h_p\right\} =\sum_{s\geq
1}L_{s}\varphi \,.
\end{equation*}
Note that $L_{s}$ maps a quasi-homogeneous polynomial of order $j$ into
a quasi-homogeneous polynomial of order $j+s$. So we have
\begin{eqnarray*}
L_{h_{\varepsilon }}^{k}\varphi _{j} &=&
\left( \sum_{s\geq 1}L_{s}\right) ^{k}\varphi_j
=\sum_{\substack{ s_{1}+s_{2}+\ldots +s_{k}\geq k \\ %
s_{1},s_{2},\ldots ,s_{k}\geq 1}}L_{s_{1}}\ldots L_{s_{k}}\varphi _{j} \\
&=&\sum_{m\geq k}\sum_{s_{1}+s_{2}+\ldots +s_{k}=m}L_{s_{1}}\ldots
L_{s_{k}}\varphi _{j}.
\end{eqnarray*}%
Now let us consider the components of $\Phi _{h_{\varepsilon }}^{1}$ at
order $(p,p+1),$ the first component is%
\begin{equation*}
\left[ L_{h_{\varepsilon }}x+\sum_{k\geq 2}\frac{1}{k!}L_{h_{\varepsilon
}}^{k}x\right] _{p}=L_{p-2}x+\sum_{k=2}^{p-2}\frac{1}{k!}\sum_{s_{1}+s_{2}+%
\ldots s_{k}=p-2}L_{s_{1}}\ldots L_{s_{k}}x
\end{equation*}%
and the second component%
\begin{equation*}
\left[ L_{h_{\varepsilon }}y+\sum_{k\geq 2}\frac{1}{k!}L_{h_{\varepsilon
}}^{k}y\right] _{p+1}=L_{p-2}y+\sum_{k=2}^{p-2}\frac{1}{k!}%
\sum_{s_{1}+s_{2}+\ldots s_{k}=p-2}L_{s_{1}}\ldots L_{s_{k}}y.
\end{equation*}%
We want%
\begin{equation*}
\left( 
\begin{array}{c}
1^{st}\text{ }comp \\ 
2^{nd}\text{ }comp%
\end{array}%
\right) =\left( 
\begin{array}{c}
f_{p} \\ 
g_{p+1}%
\end{array}%
\right) .
\end{equation*}%
We have%
\begin{eqnarray*}
L_{p-2}x &=&\left\{ x,h_{p+3}\right\} , \\
L_{p-2}y &=&\left\{ y,h_{p+3}\right\} .
\end{eqnarray*}%
Thus%
\begin{equation}
\label{Eq:hp3}
\left( 
\begin{array}{c}
\frac{\partial h_{p+3}}{\partial y} \\ 
-\frac{\partial h_{p+3}}{\partial x}%
\end{array}%
\right) =\left( 
\begin{array}{c}
f_{p}-\sum\limits_{k=2}^{p-2}\frac{1}{k!}\sum\limits_{s_{1}+\ldots
+s_{k}=p-2}L_{s_{1}}\ldots L_{s_{k}}x \\ 
g_{p+1}-\sum\limits_{k=2}^{p-2}\frac{1}{k!}\sum\limits_{s_{1}+\ldots
+s_{k}=p-2}L_{s_{1}}\ldots L_{s_{k}}y%
\end{array}%
\right) .
\end{equation}
The Hamiltonian $h_{p+3}$ is defined from this equation
uniquely up to a function of $\varepsilon$
if and only if the right hand side is divergence free.
The last condition follows from the area-preservation property
due to the following lemma.

\begin{lemma}
If expansions of two area-preserving maps coincide up to the order $\left( p-1,p\right) $
then the divergences of the terms of the order $(p,p+1)$ are equal.
\end{lemma}

\begin{proof}
We use the area-preserving property (\ref{Parabolic_ap})
and collect the terms of order $p-2$ 
\begin{equation*}
\partial _{x}f_{p}+\partial _{y}g_{p+1}-\partial _{x}g_{p}
+\sum_{\substack{ k+l=p+3 \\ k,l\geq 4}}\left\{ f_{k},g_{l}\right\} =0.
\end{equation*}
Consequently,
\begin{eqnarray*}
 \partial _{x}f_{p}+\partial _{y}g_{p+1}  &=&\partial
_{x}g_{p}-\sum_{\substack{ k+l=p+3 \\ k,l\geq 4}}\left\{ f_{k},g_{l}\right\} 
\\
&=&
\partial_{x}\tilde g_{p}-\sum_{\substack{ k+l=p+3 \\ k,l\geq 4}}\left\{\tilde f_{k},\tilde g_{l}\right\} 
=\partial _{x}\widetilde{f}_{p}+\partial _{y}\widetilde{g}_{p+1}.
\end{eqnarray*}
\end{proof}

We finish the proof by the following observation.
Let $\tilde h=\sum_{k=6}^{p+3} h_k$. For any
choice of $h_{p+3}$ the maps $\Phi^1_{\tilde h}$
and $F_\varepsilon$ satisfy the assumptions of the previous lemma
therefore their orders $(p,p+1)$ have the same divergence.
That is the solvability condition for the equation (\ref{Eq:hp3}).
\end{proof}

\subsection{Simplification of the interpolating Hamiltonian}

The Hamiltonian $h_\varepsilon$ can be simplified with the help
of a formal canonical change of variables.

\begin{theorem}\label{Thm:parabolic_simpl}
If $h(x,y,\varepsilon )=\sum_{p\geq 6}h_{p}(x,y,\varepsilon )$ 
such that the leading order has the form
\begin{equation}\label{Eq:h6}
h_{6}(x,y,\varepsilon )=
\frac{y^{2}}{2}+a\frac{x^{3}}3-b\varepsilon x
\end{equation}
then there exists a formal canonical substitution 
such that the Hamiltonian takes the form
\begin{equation*}
\widetilde{h}(x,y,\varepsilon )=\frac{y^{2}}{2}+\sum_{p\geq
6}u_{p}(x,\varepsilon ).
\end{equation*}
\end{theorem}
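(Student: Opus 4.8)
The plan is to construct the simplifying canonical transformation order by order using the Lie series machinery already set up in this section. I would look for a formal generating Hamiltonian $\chi=\sum_{p\ge 7}\chi_p(x,y,\varepsilon)$ (starting at order $7$ so that $L_\chi$ raises the quasi-homogeneous order by at least $2$, cf.\ the discussion after~(\ref{Def_L})) and set $\widetilde h = h\circ\Phi_\chi^1 = \exp(L_\chi)h$. The goal is to choose the $\chi_p$ inductively so that the $y$-dependence of $\widetilde h$ is reduced to the single term $\tfrac{y^2}2$, i.e. all higher-order terms become functions of $x$ and $\varepsilon$ only. Since $\exp(L_\chi)h = h + \{h,\chi\} + \tfrac12\{\{h,\chi\},\chi\}+\cdots$, at each order $p\ge 7$ the new term $\widetilde h_p$ equals $h_p + \{h_6,\chi_p\} + (\text{terms built from }h_7,\dots,h_{p-1}\text{ and }\chi_7,\dots,\chi_{p-1})$, where the correction terms are already determined by the induction hypothesis. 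So the step reduces to solving, for the unknown $\chi_p$, a \emph{homological equation} of the form
\begin{equation*}
\{h_6,\chi_p\} = v_p(x,\varepsilon) - w_p(x,y,\varepsilon),
\end{equation*}
where $w_p$ is the known ``remainder" of order $p$ and $v_p$ is the (free) part of $\widetilde h_p$ we allow to survive.

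The key computation is that $\{h_6,\chi_p\} = \partial_y h_6\,\partial_x\chi_p - \partial_x h_6\,\partial_y\chi_p = y\,\partial_x\chi_p - (ax^2-b\varepsilon)\,\partial_y\chi_p$, using the explicit form~(\ref{Eq:h6}). I would expand $\chi_p$ and $w_p$ in powers of $y$ and match coefficients. Writing $\chi_p = \sum_j c_j(x,\varepsilon)y^j$ (with $3j\le p-6$ or so, by quasi-homogeneity) and $w_p=\sum_j d_j(x,\varepsilon)y^j$, the term $y\,\partial_x\chi_p$ shifts the $y$-degree up by one while $(ax^2-b\varepsilon)\partial_y\chi_p$ shifts it down by one. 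Comparing the top $y$-power first and descending, one sees that the coefficients $c_j$ for $j\ge1$ can be solved recursively (each step is an antiderivative in $x$, possibly after dividing by the operator $y\partial_x$, which acts invertibly on the relevant polynomial spaces once one keeps track of quasi-homogeneous degrees), while the $j=0$ coefficient of the bracket is automatically $y$-independent and feeds into the determination of the surviving term $u_p(x,\varepsilon)=v_p$. The upshot is that one can always kill all positive powers of $y$ in $\widetilde h_p$, leaving $\widetilde h_p = \tfrac{y^2}2\,\delta_{p,6} + u_p(x,\varepsilon)$.

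The main obstacle — and the point that needs care rather than brute force — is verifying that the homological equation is genuinely solvable at each order, i.e.\ that the operator $\chi_p\mapsto\{h_6,\chi_p\}$ maps \emph{onto} the space of quasi-homogeneous polynomials of the appropriate order modulo the functions of $(x,\varepsilon)$ alone. Because $h_6$ is the Hamiltonian of the (integrable) limit flow, $\{h_6,\cdot\}$ is essentially differentiation along the flow of $h_6$, and the obstruction to solvability is the existence of flow-invariant polynomials; one must check that the only polynomial first integrals surviving in this graded setting are functions of $h_6$ itself, which (modulo $\tfrac{y^2}2$) account for the surviving $u_p(x,\varepsilon)$. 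The cleanest way to handle this is to work degree-by-degree in $y$ as above: the down-shift operator $(ax^2-b\varepsilon)\partial_y$ together with the up-shift $y\partial_x$ form a triangular system whose solvability at each $y$-level is reduced to integrability in $x$, and the only genuine kernel appears at the bottom ($y$-independent) level, which is exactly what we are allowed to leave behind. Once this linear-algebra bookkeeping is in place, convergence in the formal metric $d$ is immediate: since $\chi$ starts at order $\ge7$, each coefficient of $\widetilde h = \exp(L_\chi)h$ depends on only finitely many $\chi_p$, and the whole construction converges in $(\mathfrak H,d)$ by the density and completeness properties established earlier in this section.
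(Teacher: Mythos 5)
Your overall strategy — transform $h$ by a formal canonical substitution $\exp(L_\chi)$, reduce the order-by-order matching to the homological equation $\{h_6,\chi_p\}=\text{known}$, solve that equation degree by degree in $y$ via a triangular recursion in which each step is an integration in $x$, and leave only the $y$-independent remainder — is exactly the paper's proof. However there is an off-by-one indexing error that, taken literally, makes the construction fail. Since the Poisson bracket lowers the quasi-homogeneous order by $5$ and $h_6$ has order $6$, the bracket $\{h_6,\chi_p\}$ has quasi-homogeneous order $p+1$, not $p$, so $\chi_p$ feeds into $\widetilde h_{p+1}$. Starting your generator at order $7$ therefore leaves $\widetilde h_7=h_7$ unchanged, and $h_7$ can perfectly well contain a $y$-dependent term (e.g.\ $yx^2$ is quasi-homogeneous of order $3+4=7$). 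You must start $\chi$ at order $6$; the paper's induction indeed picks $\chi_p$ of order $p\ge6$ and imposes $h_{p+1}+\{h_6,\chi_p\}=u_{p+1}(x,\varepsilon)$ at order $p+1$.

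Two smaller remarks. Your solvability discussion conflates the kernel of $\{h_6,\cdot\}$ with its cokernel: the kernel (polynomials in $h_6$ and $\varepsilon$) measures the non-uniqueness of $\chi_p$, while the surviving terms $u_{p+1}(x,\varepsilon)$ come from the cokernel and are arbitrary quasi-homogeneous polynomials in $(x,\varepsilon)$, not functions of $h_6$. This misstatement is not load-bearing, since your concrete degree-by-degree triangular argument establishes solvability directly. Finally, it is worth making explicit the parity observation the paper relies on: because $y$ carries odd quasi-homogeneous weight and $x,\varepsilon$ carry even weights, a quasi-homogeneous polynomial of odd order contains only odd powers of $y$; hence when $p$ is even (so $p+1$ is odd) there is no $y$-independent level at order $p+1$ and one can solve $h_{p+1}+\{h_6,\chi_p\}=0$ exactly, i.e.\ $u_{p+1}=0$, while the nontrivial $u_{p+1}$ appear only for $p$ odd.
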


\begin{proof} We prove the theorem by induction. The order
$6$ is already in the desired form, just let $u_6=ax^{3}/3-b\varepsilon x$. 
Then suppose we transformed the Hamiltonian to the desired form
up to the order $p$ with $p\ge 6$. For any quasi-homogeneous polynomial
$\chi_p$ of order $p$ we have
\begin{eqnarray*}
\widetilde{h} &=&{\mathrm{e}}^{L_{\chi_p}}h=
h+\left\{ h,\chi _{p}\right\} +\hat O_{2p-10+6} \\
&=&h+\left\{ h_{6},\chi _{p}\right\} +\hat O_{p+2}
\end{eqnarray*}
We see that this change does not affect terms of order less or equal $p$.
Collecting all terms of order $p+1$ we get
\begin{equation}
\tilde{h}_{p+1}=h_{p+1}+\left\{ h_{6},\chi _{p}\right\}
.
\label{Lesson8_4}
\end{equation}%
In order to complete the proof 
we show that there is a polynomial $\chi _{p}$ such that $\tilde h_{p+1}$
is independent from $y$. 
The situation is different for $p$ even and $p$ odd.
We will show that if $p$ is even then for any $h_{p+1}$ 
there is $\chi_p$ such that
\begin{equation}\label{Eq:homeq0}
h_{p+1}+\left\{ h_{6},\chi _{p}\right\}=0\,.
\end{equation}
The polynomial $\chi_p$ is defined up to 
addition of an arbitrary polynomial of $h_6$ and $\varepsilon$.
On the other hand, if $p$ is odd then there
are two unique polynomials $u_{p+1}=u_{p+1}(x,\varepsilon)$
and $\chi_{p+1}=\chi_{p+1}(x,y,\varepsilon)$
such that
\begin{equation}\label{Eq:homeq1}
h_{p+1}+\left\{ h_{6},\chi _{p}\right\}=u_{p+1}\,.
\end{equation}
In order to prove these statements we note that since 
$y$ is counted as a term of order three
and $x$, $\varepsilon$ as terms of order two and four respectively,
a quasihomogeneous polynomial of even order contains only
even powers of $y$ and one of an odd order contains only odd powers.
Let $p+1=3q+r$ with $r\in\{0,1,2\}$. The largest possible power 
of $y$ in $h_{p+1}$ does not exceed $q$ and we can write 
\begin{equation*}
h_{p+1}=\sum_{l=0}^{q/2} y^{q - 2 l} s_{r+6l}(x,\varepsilon)
\qquad\mbox{and}\qquad
\chi _{p}=\sum_{l=0}^{(q-1)/2} y^{q-2l-1}\sigma_{r+6l+2}(x,\varepsilon),
\end{equation*}
where $s_j$ and $\sigma_j$ are quasi-homogeneous polynomials
of $x$ and $\varepsilon$. Taking into account
\[
\left\{ h_{6},\chi _{p}\right\}=
\frac{\partial u_{6}}{\partial x}\frac{\partial
\chi _{p}}{\partial y}-ay\frac{\partial \chi _{p}}{\partial x}
\]
and collecting terms in $h_{p+1}+\{h_6,\chi_p\}$ which have
of the same order in $y$ (excluding the $y$-independent terms)
we get
\begin{eqnarray*}
-a\frac{\partial \sigma_{r+2}}{\partial x}+s_{r}
&=&0\,,
\\
-a
\frac{\partial \sigma_{r+6l+2}}{\partial x}+
(q-2l+1)\frac{\partial u_{6}}{\partial x}
\sigma_{r+6l-4}
+s_{r+6l}
&=&0\,.
\end{eqnarray*}
The first equation defines $\sigma_{r+2}$. If $r\ne 2$ the solution is unique,
otherwise a constant times $\varepsilon$ can be added.
Then $\sigma_{r+6l+2}$ are to be chosen recursively  for $1\le l\le q/2$
to satisfy the second equation.

The theorem follows by induction in $p$.
\end{proof}

\section{Formal separatrix\label{Se:FS}}

In this section we will describe properties of
an asymptotic expansion for the separatrices
providing a proof for the statements of Section~\ref{Se:fs}.

\subsection{Auxiliary functions}

In this subsection we describe a useful class of functions.
A function belongs to this class if
\begin{enumerate}
\item it is periodic with the imaginary period $2\pi i.$

\item it is analytic on the entire $\mathbb{C}$ except for poles at $\pi i(2k+1)$, $k\in \mathbb{Z}.$

\item it vanishes as $\func{Re}t\rightarrow \pm \infty $.
\end{enumerate}
Any function which satisfies these three assumptions is a polynomial,
without a constant term, of two ``base" functions%
\footnote{We note that $\tanh\frac t2$ is $2\pi i$ periodic and
 analytic on $\mathbb C$ except for simple poles at $i\pi(2k+1)$
with $k\in\mathbb Z$. Suppose a function $\varphi(t)$
satisfies properties 1 and 2 and is bounded as $\Re t\to\pm\infty$.
Comparing Laurent expansions around any of the poles, we can construct 
a polynomial $p$ such that $\varphi(t)-p(\tanh\frac t2)$ has no
singularities and, consequently, is constant.
The condition 3 adds a restriction on coefficients of the polynomial
($p(1)=p(-1)=0$).}
\begin{equation}
\eta _{0} =\frac{1}{\cosh ^{2}\frac{t}{2}}
\qquad\mbox{and}\qquad
\eta _{1} =-\frac{\sinh \frac{t}{2}}{\cosh ^{3}\frac{t}{2}}\,.
\end{equation}
Equivalently, the function can be written in the form $P(\eta_0)+\eta_1Q(\eta_0)$
where $P,Q$ are polynomials of $\eta_0$ only and $P(0)=0$.
%
%
%
Indeed, a straightforward substitution shows that $\eta_0$, $\eta_1$ satisfy
the differential equation 
\begin{eqnarray}
\dot{\eta}_{0} &=&\eta _{1},  \label{HamEqnMotion} \\
\dot{\eta}_{1} &=&
\eta _{0}-\frac{3}{2}\eta _{0}^{2}.  \notag
\end{eqnarray}%
This equation is Hamiltonian with the Hamiltonian function
\begin{equation*}
H=\frac{1}{2}\left( \eta _{1}^{2}-\eta _{0}^{2}+\eta _{0}^{3}\right) .
\end{equation*}
The Hamiltonian is constant along solutions of the differential equation.
Since both $\eta_0$ and $\eta_1$ vanish as $\func{Re}\to\infty$, we get
the identity
\begin{equation}
\eta _{1}^{2}-\eta _{0}^{2}+\eta _{0}^{3}=0.  \label{eta1_squared}
\end{equation}
This last equality and (\ref{HamEqnMotion}) imply that
\begin{equation}
\dot{\eta}_{0}^{2}=\eta _{1}^{2}=\eta _{0}^{2}-\eta _{0}^{3}\,.
\label{EtaDotSq}
\end{equation}
Using this identity we can exclude all even powers of $\eta_1$ 
and reduce all odd powers to the power one.
These identities will be used to simplify formulae involved in the construction
of the formal separatrix.

\subsection{Formal separatrix of the flow}

In this section we prove Theorem~\ref{Thm:fs}. We remind that $\delta=\varepsilon^{1/4}$.
By Theorem \ref{Thm:parabolic_simpl} there is a formal canonical change 
of variables which transforms the Hamiltonian $h_\varepsilon$
to the simpler form
\begin{equation}
H_{\varepsilon }(x,y)=\frac{y^{2}}{2}+U(x,\varepsilon ),  \label{NFHamiltonian}
\end{equation}%
where%
\begin{equation}
U(x,\varepsilon )=\sum_{\substack{ m\geq 0,k\geq 1  \\ k+2m\geq 3 }}
u_{km}x^{k}\varepsilon ^{m}  \label{NFPotential}\,.
\end{equation}
After this change equation (\ref{Eq:mainforma}), which we for convenience repeat 
\begin{equation}
\mu(\delta)\dot {\mathbf{X}}=\mathrm{J}\nabla H_\varepsilon(\mathbf X),
\end{equation}
takes the form
\begin{equation}\label{Eq:system}
\left\{ 
\begin{array}{l}
\mu(\varepsilon ) \dot{x}=y, \\ 
\mu(\varepsilon ) \dot{y}=-U^{\prime }(x,\varepsilon ),%
\end{array}%
\right. 
\end{equation}%
where $U^{\prime }\equiv \frac{\partial U}{\partial x}$. 
This system can be conveniently rewritten in an equivalent form
as a scalar equation of second order:
\begin{equation*}
(\mu(\varepsilon ))^{2}\ddot{x}=-U^{\prime }(x,\varepsilon ).
\end{equation*}
In order to simplify notation we introduce the new auxiliary series
$$
\beta(\varepsilon)=\frac12(\mu(\varepsilon))^2\,.
$$
Since $\mu$ was an odd series in $\delta$, the series $\beta$ is even.
The equation takes the form
\[
2\beta(\varepsilon) \ddot{x}=-U^{\prime }(x,\varepsilon )
\]
Multiplying with $\dot{x}$ and integrating with respect to $t$ we obtain%
\begin{equation}
 \beta (\varepsilon )\dot{x}^{2}=-U(x,%
\varepsilon )+c(\varepsilon ),  \label{eqnToSolve}
\end{equation}%
where $c(\varepsilon )$ is a constant of integration. Now we use the following
ansatz for solving this equation:
\begin{eqnarray*}
x(t,\varepsilon )&=&\sum_{k\geq 1}\delta ^{2k}x_{k}(t),
\\
\beta (\varepsilon )&=&\sum_{k\geq 1}a_{k}\delta^{2k}\,,
\end{eqnarray*}
and%
\begin{equation*}
c(\varepsilon )=\sum_{k\geq 3}c_{k}\delta ^{2k}.
\end{equation*}%
Substituting these series into equation (\ref{eqnToSolve})
we get
\begin{equation}
\left( \sum_{k\geq 1}a_{k}\delta ^{2k}\right) \left( \sum_{k\geq 1}\delta
^{2k}\dot{x}_{k}(t)\right) ^{2}+\sum_{k+2m\geq 3}u_{km}\left( \sum_{j\geq
1}\delta ^{2j}x_{j}\right) ^{k}\delta ^{4m}=\sum_{k\geq 3}c_{k}\delta ^{2k}.
\label{eqnToSolveSeries}
\end{equation}%
We recall that this equation is considered in the class of formal series
which assumes that two formal series are equal if and only if their coefficients
coincide. We will use induction to show that the equation can be satisfied
at every order in $\delta^2$.

We note that in (\ref{eqnToSolveSeries}) the least order of $\delta$ is $6$.
Collecting all terms of this order we get the following equation
\begin{equation}
a_{1}\dot{x}_{1}^{2}+u_{30}x_{1}^{3}+u_{11}x_{1}=c_{3}.
\label{eqnToSolve_n3}
\end{equation}
This equation looks very similar to (\ref{EtaDotSq}) but has different
coefficients in front of its terms. This suggests $x_1$ to be of the form
\begin{equation}
x_{1}=b_{0}+b_{1}\eta _{0},  \label{def_x1}
\end{equation}%
where $b_{0},b_{1}\in \mathbb{R}$ are to be determined. We insert $x_{1}$
into $\left( \ref{eqnToSolve_n3}\right) $ which gives%
\begin{equation*}
a_{1}(b_{1}\dot{\eta}_{0})^{2}+u_{30}(b_{0}+b_{1}\eta
_{0})^{3}+u_{11}(b_{0}+b_{1}\eta _{0})=c_{3}.
\end{equation*}%
Using equation $\left( \ref{EtaDotSq}\right) $ to replace $\dot{\eta}_{0}^{2}
$ and expanding the parentheses we rewrite this expression as follows%
\begin{eqnarray}
c_{3} &=&a_{1}b_{1}^{2}\left( \eta _{0}^{2}-\eta _{0}^{3}\right)
+u_{30}b_{0}^{3}+u_{30}3b_{0}^{2}b_{1}\eta _{0}+u_{30}3b_{0}b_{1}^{2}\eta
_{0}^{2}  \label{c3eqn} \\
&&+u_{30}b_{1}^{3}\eta _{0}^{3}+u_{11}b_{0}+u_{11}b_{1}\eta _{0}.  \notag
\end{eqnarray}%
We now collect terms in $\left( \ref{c3eqn}\right) $ of equal powers in $%
\eta _{0}$ and obtain the following system of equations
\begin{equation}
\left\{ 
\begin{array}{l}
-a_{1}b_{1}^{2}+u_{30}b_{1}^{3}=0,\bigskip  \\
a_{1}b_{1}^{2}+u_{30}3b_{0}b_{1}^{2}=0,\bigskip  \\
3u_{30}b_{0}^{2}b_{1}+u_{11}b_{1}=0,\bigskip  \\
c_{3}=u_{11}b_{0}+u_{30}b_{0}^{3}.%
\end{array}%
\right.
\end{equation}%
The last equation defines $c_{3}$. The first three lines imply
\begin{equation}
\left\{ 
\begin{array}{l}
b_{0}=\pm \sqrt{-\frac{u_{11}}{3u_{30}}},\bigskip  \\ 
b_{1}=-3b_{0},\bigskip  \\ 
a_{1}=-3u_{30}b_{0.}%
\end{array}%
\right. .  \label{n3_coeff}
\end{equation}%
We make the following two remarks. Note that, to the leading order, 
$b_{0}$ is the $x$ coordinate of the equilibrium point and $c_{3}$ is its energy.

The sign of $b_{0}$ should be chosen to ensure $a_{1}>0$
since the latter coefficient is the first term of the expansion of $\beta(\varepsilon )$
which is a square of a real series. 

We have seen that equation $\left( \ref{eqnToSolveSeries}\right) $ has a
solution at the leading order $\delta ^{6}$, and continue by induction. 
We explain the first step of the induction in more detail 
in order to illustrate the method used in the general step.

Let us collect the terms of order $\delta ^{8}$ 
which gives the equation
\begin{equation}
2a_{1}\dot{x}_{1}\dot{x}_{2}+
a_{2}\dot{x}_{1}^{2}+u_{11}x_{2}+u_{21}x_{1}^{2}+3u_{30}x_{1}^{2}x_{2}+u_{40}x_{1}^{4}=c_{4}.
\label{eqnToSolve_n4}
\end{equation}
We note that this is a linear non-homogeneous equation in $x_2$.
Therefore it defines $x_2$ uniquely up to addition of a solution 
of the corresponding homogeneous equation.

Using the form of $x_{1}$ given in equation (\ref{def_x1}) 
and the relationship (\ref{EtaDotSq}) we rearrange this equation as
\begin{eqnarray}
c_{4} &=&2a_{1}\dot{x}_{1}\dot{x}_{2}+\left( 3u_{30}x_{1}^{2}+u_{11}\right)
x_{2}  \label{eqnToSolve_n4'} \\
&&+a_{2}b_{1}^{2}(\eta _{0}^{2}-\eta _{0}^{3})+u_{21}(b_{0}+b_{1}\eta
_{0})^{2}+u_{40}(b_{0}+b_{1}\eta _{0})^{4}.  \notag
\end{eqnarray}

Before solving this equation for each power of $\eta _{0}$ separately we
make some observations which will simplify our analysis. First we note that
the second line of $\left( \ref{eqnToSolve_n4'}\right) $ is a polynomial in $%
\eta _{0}$ of order $4$. The first line of $\left( \ref{eqnToSolve_n4'}%
\right) $ tells us that $x_{2}$ should have a pole of order $4$, hence we
assume that $x_{2}$ takes the following form%
\begin{equation*}
x_{2}=b_{20}+b_{21}\eta _{0}+b_{22}\eta _{0}^{2}.
\end{equation*}%
Now let us simplify the first line of $\left( \ref{eqnToSolve_n4'}\right) $
beginning with the second term%
\begin{eqnarray*}
\left( 3u_{30}x_{1}^{2}+u_{11}\right)  &=&3u_{30}(b_{0}+b_{1}\eta
_{0})^{2}+u_{11} \\
&=&3u_{30}b_{0}^{2}+6u_{30}b_{0}b_{1}\eta _{0}+3u_{30}b_{1}^{2}\eta
_{0}^{2}+u_{11}.
\end{eqnarray*}%
Using $\left( \ref{n3_coeff}\right) $ we conclude that%
\begin{equation}\label{internal_ref}
\left( 3u_{30}x_{1}^{2}+u_{11}\right) = 3u_{30}b_{1}b_{0}(2\eta _{0}-3\eta
_{0}^{2})
\end{equation}%
To ease the notation we let%
\begin{equation}
A=3u_{30}b_{1}b_{0},  \label{def_A}
\end{equation}%
and we notice that $A\neq 0$ by virtue of $\left( \ref{n3_coeff}\right)$.
Next we simplify the first term in the first line of $\left( \ref%
{eqnToSolve_n4'}\right) $%
\begin{eqnarray*}
\dot{x}_{1}\dot{x}_{2} &=&\left( b_{21}+2b_{22}\eta _{0}\right) \dot{\eta}%
_{0}b_{1}\dot{\eta}_{0} \\
&=&b_{1}\left( b_{21}+2b_{22}\eta _{0}\right) \left( \eta _{0}^{2}-\eta
_{0}^{3}\right) ,
\end{eqnarray*}%
where in the last step we used $\left( \ref{EtaDotSq}\right) .$ Then
equation $\left( \ref{eqnToSolve_n4'}\right) $ reads%
\begin{eqnarray}
c_{4} &=&2a_{1}b_{1}\left( b_{21}+2b_{22}\eta _{0}\right) \left( \eta
_{0}^{2}-\eta _{0}^{3}\right)   \label{eqnToSolve_n4''} \\
&&+A(2\eta _{0}-3\eta _{0}^{2})\left( b_{20}+b_{21}\eta _{0}+b_{22}\eta
_{0}^{2}\right)   \notag \\
&&+a_{2}b_{1}^{2}(\eta _{0}^{2}-\eta _{0}^{3})+u_{21}(b_{0}+b_{1}\eta
_{0})^{2}+u_{40}(b_{0}+b_{1}\eta _{0})^{4}.  \notag
\end{eqnarray}

Let us now solve $\left( \ref{eqnToSolve_n4''}\right) $ for $x_{2},c_{4}$
and $a_{2}.$ The strategy is to solve $\left( \ref{eqnToSolve_n4''}\right) $
in each power of $\eta _{0}$ separately in the following order
\begin{enumerate}
\item
$c_{4}$ is defined from the order $0$ in~$\eta _{0}$.
\item
$b_{20}$ is defined from the order $1$ in~$\eta _{0}$.
\item
$b_{22}$ is defined from the order $4$ in~$\eta _{0}$.
\item
$b_{21}$ and $a_{2}$ solve a system of equations obtained from the orders 
$2$ and $3$ in~$\eta _{0}$.
\end{enumerate}
At order $0$ in~$\eta _{0}$ we get
\begin{equation}
c_{4}=u_{21}b_{0}^{2}+u_{40}b_{0}^{4},  \label{n4_coeff2}
\end{equation}%
which defines the coefficient $c_{4}.$ Next we proceed with the terms of
order $1$ in $\eta _{0}$%
\begin{equation*}
2Ab_{20}+u_{21}2b_{0}b_{1}+u_{40}4b_{0}^{3}b_{1}=0,
\end{equation*}%
which gives%
\begin{equation}
b_{20}=-\frac{u_{21}2b_{0}b_{1}+u_{40}4b_{0}^{3}b_{1}}{2A}.
\label{n4_coeff3}
\end{equation}%
Since $A$ is nonzero equation $\left( \ref{n4_coeff3}\right) $ defines the
coefficient $b_{20}$ uniquely. At order $4$ in $\eta _{0}$ we have%
\begin{equation*}
-(4a_{1}b_{1}+3A)b_{22}+u_{40}b_{1}^{4}=0,
\end{equation*}%
hence%
\begin{equation}
b_{22}=\frac{u_{40}b_{1}^{4}}{4a_{1}b_{1}+3A}.  \label{n4_coeff1}
\end{equation}%
The denominator in $\left( \ref{n4_coeff1}\right) $ is non-zero since, using 
$\left( \ref{n3_coeff}\right) $ and $\left( \ref{def_A}\right) ,$%
\begin{equation*}
4a_{1}b_{1}+3A=-12u_{30}b_{0}b_{1}+9u_{30}b_{0}b_{1}=9u_{30}b_{0}^{2},
\end{equation*}%
and $u_{30}\neq 0$ by assumption and $b_{0}\neq 0$ by $\left( \ref{n3_coeff}%
\right) .$ Therefore $\left( \ref{n4_coeff1}\right) $ determines $b_{22}$
uniquely. The terms of order $3$ in $\eta _{0}$ are%
\begin{equation*}
-a_{2}b_{1}^{2}+4u_{40}b_{0}b_{1}^{3}+(-2a_{1}b_{1}-3A)b_{21}+4a_{1}b_{1}b_{22}+2Ab_{22}=0,
\end{equation*}%
and the terms of order $2$ in $\eta _{0}$ are%
\begin{equation*}
-3Ab_{20}+2a_{1}b_{1}b_{21}+2Ab_{21}+a_{2}b_{1}^{2}+u_{21}b_{1}^{2}+u_{40}6b_{0}^{2}b_{1}^{2}=0.
\end{equation*}%
These two terms define the following system of equations in $b_{21}$ and $%
a_{2}$%
\begin{equation}
\left( 
\begin{array}{cc}
2a_{1}b_{1}+2A & b_{1}^{2} \\ 
-2a_{1}b_{1}-3A & -b_{1}^{2}%
\end{array}%
\right) \left( 
\begin{array}{c}
b_{21} \\ 
a_{2}%
\end{array}%
\right) =\left( 
\begin{array}{c}
3Ab_{20}-u_{21}b_{1}^{2}-u_{40}6b_{0}^{2}b_{1}^{2} \\ 
-4a_{1}b_{1}b_{22}-2Ab_{22}-u_{40}4b_{0}b_{1}^{3}%
\end{array}%
\right) .  \label{n4_coeff4}
\end{equation}%
Call the matrix in the left-hand side $M.$ This system of equations has a
unique solution since%
\begin{equation*}
\det M=Ab_{1}^{2}=-81u_{30}b_{0}^{4}\neq 0.
\end{equation*}%
Therefore $b_{21}$ and $a_{2}$ is the unique solution of $\left( \ref%
{n4_coeff4}\right) $. We have now shown that $x_{2},a_{2}$ and $c_{4}$ are
uniquely defined.

Now let us continue with the inductive step at an arbitrary order of $\delta
^{2}$. For a general $n$ the terms corresponding to $\delta ^{2n}$ are%
\begin{equation*}
\sum_{\substack{ k+l+m=n \\ k,l,m\geq 1}}a_{k}\dot{x}_{l}\dot{x}%
_{m}+\sum_{k+2m\geq 3}u_{km}\sum_{\substack{ j_{1}+\ldots +j_{k}=n-2m \\ %
j_{1},\ldots ,j_{k}\geq 1}}x_{j_{1}}\ldots x_{j_{k}}=c_{n}.
\end{equation*}%
This equation has the form%
\begin{equation}
2a_{1}\dot{x}_{1}\dot{x}_{n}+(3u_{30}x_{1}^{2}+u_{11})x_{n}+\widetilde{P}%
(x_{1},\ldots ,x_{n-1})+a_{n}\dot{x}_{1}^{2}=c_{n},  \label{eqnToSolve_n}
\end{equation}%
where $\widetilde{P}(x_{1},\ldots ,x_{n-1})$ is a polynomial. We note that $%
\widetilde{P}$ is a (known) polynomial of order $n+2$ in $\eta _{0}$ and
that all coefficients are unique since our induction assumption is that $%
x_{1},\ldots ,x_{n-1}$ are unique. We will show by induction that there
exist a unique $a_{n}$ and a unique%
\begin{equation}
x_{n}=\sum_{m=0}^{n}b_{nm}\eta _{0}^{m}.  \label{def_xn}
\end{equation}%
That is, we will show that $a_{n}$ and all $b_{nm}$ are uniquely defined.

Using equality (\ref{internal_ref}) which states that%
\begin{equation*}
(3u_{30}x_{1}^{2}+u_{11})=A(2\eta _{0}-3\eta _{0}^{2}),
\end{equation*}%
where $A=3u_{30}b_{1}b_{0}\neq 0$ (see equation $\left( \ref{def_A}\right) $%
) and that $x_{n}$ has the form $\left( \ref{def_xn}\right) $ we simplify $%
\left( \ref{eqnToSolve_n}\right) $ as follows%
\begin{eqnarray}
c_{n} &=&2a_{1}b_{1}(\eta _{0}^{2}-\eta _{0}^{3})\left( b_{n1}+2b_{n2}\eta
_{0}+\ldots nb_{nn}\eta _{0}^{n-1}\right)  \label{eqnToSolve_n'} \\
&&+A(2\eta _{0}-3\eta _{0}^{2})\left( b_{n0}+\ldots +b_{nn}\eta
_{0}^{n}\right)  \notag \\
&&+a_{n}b_{1}^{2}(\eta _{0}^{2}-\eta _{0}^{3})+\widetilde{P}(x_{1},\ldots
,x_{n-1}).  \notag
\end{eqnarray}%
We will now solve this equation for each power in $\eta _{0}$ separately.
The strategy is the following
\begin{enumerate}
\item
The power $0$ will define $c_{n}$ uniquely.
\item
The power $1$ will define $b_{n0}$ uniquely.
\item
The power $n+2$ will define $b_{nn}$ uniquely.
\item
The powers $j\in \left\{ 4,\ldots ,n+1\right\} $ (starting from $j=n+1$
and proceeding in decreasing order) will define the coefficients $b_{n,j-2}$
uniquely.
\item Finally, at powers $2$ and $3$ we will obtain a system of equations in $%
b_{n1}$ and $a_{n}$ which we show has a unique solution.
\end{enumerate}
In what follows we use $\left[ \widetilde{P}\right]
_{k}$ to denote the terms in the polynomial $\widetilde{P}(x_{1},\ldots
,x_{n-1})$ which are of order $k$ in $\eta _{0}.$ Let us now start solving $%
\left( \ref{eqnToSolve_n'}\right) $ at order $0$ in $\eta _{0}$. At this
order we have%
\begin{equation*}
c_{n}=\left[ \widetilde{P}\right] _{0},
\end{equation*}%
which defines $c_{n}$ uniquely.
Next, at order $1$ in $\eta _{0}$ we have%
\begin{equation*}
2Ab_{n0}+\left[ \widetilde{P}\right] _{1}=0,
\end{equation*}%
thus%
\begin{equation*}
b_{n0}=-\frac{\left[ \widetilde{P}\right] _{1}}{2A}.
\end{equation*}%
Since $A\neq 0$ this equation defines $b_{n0}$ uniquely.

At order $n+2$ in $\eta _{0}$ we get%
\begin{equation*}
-2a_{1}nb_{1}b_{nn}-3Ab_{nn}+\left[ \widetilde{P}\right] _{n+2}=0,
\end{equation*}%
thus%
\begin{equation*}
b_{nn}=\frac{\left[ \widetilde{P}\right] _{n+2}}{2na_{1}b_{1}+3A}.
\end{equation*}%
By equation $\left( \ref{n3_coeff}\right) $ $a_{1}b_{1}=A,$ thus the
denominator is equal to $A(2n+3)$ which is non-zero, hence $b_{nn}$ is
defined uniquely.

Now we continue with the orders $j\in \left\{ 4,\ldots ,n+1\right\} $. At
order $j$ we solve the equation%
\begin{equation*}
b_{n,j-2}\left( 2a_{1}b_{1}(j-2)+3A\right) -b_{n,j-1}(2a_{1}b_{1}(j-1)+2A)-
\left[ \widetilde{P}\right] _{j}=0,
\end{equation*}%
thus%
\begin{equation*}
b_{n,j-2}=\frac{b_{n,j-1}(2a_{1}b_{1}(j-1)+2A)+\left[ \widetilde{P}\right]
_{j}}{2a_{1}b_{1}(j-2)+3A}.
\end{equation*}%
Since we are proceeding in decreasing $j$ the coefficient $b_{n,j-1}$ in the
right-hand side has already been determined. The denominator%
\begin{equation*}
2a_{1}b_{1}(j-2)+3A=A(2(j-2)+3)
\end{equation*}%
is non-zero since $j-2$ is positive for all $j\in $ $\left\{ 4,\ldots
,n+1\right\} .$ Therefore the coefficients $b_{n,j-2}$ are uniquely
determined for $j\in \left\{ 4,\ldots ,n+1\right\} $.

Finally we have to solve for the remaining coefficients $a_{n}$ and $b_{n1}.$
They are obtained from the equations at order $2$ and $3$ which are given by%
\begin{equation*}
2a_{1}b_{1}b_{n1}+2Ab_{n1}-3Ab_{n0}+a_{n}b_{1}^{2}+\left[ \widetilde{P}\right]
_{2}=0
\end{equation*}%
and%
\begin{equation*}
4a_{1}b_{1}b_{n2}-2a_{1}b_{1}b_{n1}+2Ab_{n2}-3Ab_{n1}-a_{n}b_{1}^{2}+\left[ 
\widetilde{P}\right] _{3}=0,
\end{equation*}%
respectively. We observe that this system of equations may be written as%
\begin{equation}
\left( 
\begin{array}{cc}
2a_{1}b_{1}+2A & b_{1}^{2} \\ 
-2a_{1}b_{1}-3A & -b_{1}^{2}%
\end{array}%
\right) \left( 
\begin{array}{c}
b_{n1} \\ 
a_{n}%
\end{array}%
\right) =\left( 
\begin{array}{c}
3Ab_{n0}-\left[ \widetilde{P}\right] _{2} \\ 
-4a_{1}b_{1}b_{n2}-2Ab_{n2}-\left[ \widetilde{P}\right] _{3}%
\end{array}%
\right) .  \label{n_system}
\end{equation}%
The determinant of the matrix in the left-hand side is $%
Ab_{1}^{2}=-81u_{30}b_{0}^{4}$ which is non-zero, hence $b_{n1}$ and $a_{n}$ is
the unique solution of $\left( \ref{n_system}\right) .$

We have now shown that $b_{n0},\ldots ,b_{nn}$ and $a_{n}$ and $c_{n}$ are
uniquely defined, hence $x_{n}$ is a unique solution of $\left( \ref%
{eqnToSolve_n}\right) $.

It follows by induction that $x_{m}$ is uniquely defined for all $m.$

\medskip

Now we can reconstruct the second component of the solution 
of system~(\ref{Eq:system}). Since
\[
x=\sum\limits_{k\geq 1}\delta ^{2k}x_{k}(t)
\]
the second component of the solution is restored by
\begin{equation*}
y= \mu (\varepsilon ) \dot{x}=
\left( 2\sum\limits_{k\geq1}a_{k}\delta ^{2k}\right) ^{\frac{1}{2}}
\sum\limits_{k\geq 1}\delta ^{2k}\dot x_{k}(t) .%
\end{equation*}
Since $x_{k}(t)$ is a polynomial in $\eta _{0}$ of order $k$
we conclude that the solution of the system (\ref{Eq:system}) can be written in the form
\begin{equation}
\left\{ 
\begin{array}{l}
x=\sum\limits_{k\geq 1}\delta ^{2k}x_{k}(t),\bigskip  \\ 
y=\eta_{1}(t)\sum\limits_{k\geq 1}\delta ^{2k+1}y_{k-1}(t),%
\end{array}%
\right.   \label{solution}
\end{equation}%
where $y_{k-1}$ is a polynomial in $\eta _{0}$ of order $k-1$.

\medskip

The solution $\left( \ref{solution}\right) $ is a formal solution of the
system of equations $\left( \ref{Eq:system}\right) .$ Equation $\left( \ref%
{Eq:system}\right) $ was obtained from $\left( \ref{Eq:mainforma}\right) $ by a
change of variables%
\begin{equation}
\mathbf{x}=\phi _{\chi _{\varepsilon }}^{-1}(\mathbf{X})
\label{VarChange_formal}
\end{equation}%
where $\chi _{\varepsilon }$=$\sum_{p\ge6}\chi _{p}$ where $\chi _{p}$ is a
quasi-homogeneous polynomial of order $p$. To finish the proof of the theorem
we have to invert this change of variables and consider the structure of the
resulting formal series $\mathbf{X}$. As the change of variables is close-to-identity (in
the quasi-homogeneous sense) it is easy to invert $\left( \ref%
{VarChange_formal}\right) .$ For our purposes it is sufficient to consider
the change in its most general form, hence%
\begin{equation}
\mathbf{X}=\phi _{\chi _{\varepsilon }}^{1}(\mathbf{x})=\left( 
\begin{array}{c}
x+\sum\limits_{2k+3l+4m\geq 3}d_{klm}x^{k}y^{l}\delta ^{4m}\medskip  \\ 
y+\sum\limits_{2k+3l+4m\geq 4}f_{klm}x^{k}y^{l}\delta ^{4m}%
\end{array}%
\right) ,  \label{VarChange_formal2}
\end{equation}%
where $d_{klm},f_{klm}$ are coefficients and $\left( x,y\right) $ are given
by $\left( \ref{solution}\right) .$

Let us now consider the structure of $\mathbf{X},$ that is, let us consider
the sums in the right hand side of the previous equation. Obviously $\delta
^{4m}$ is an even power of $\delta .$ We also note that any power of the
series $x$ is a series of the same type as $x$ itself, i.e. a series in even
powers of $\delta .$ This is not the case with $y\,.$ An even power of the
series $y$ gives a series in \emph{even} powers of $\delta $ while an odd
power of $y$ gives a series in \emph{odd} powers of $\delta .$ Hence $%
x^{k}y^{l}\delta ^{4m}$ is a series in even powers of $\delta $ if $l$ is
even and in odd powers if $l$ is odd.

Furthermore, since $\left( x,y\right) $ are series of polynomials in $\eta
_{0}(t)$ and $\dot{\eta}_{0}(t)$ any power of them are series of polynomials
as well. Hence we conclude that $\mathbf{X}$ is a power series in both odd
and even powers of $\delta $ and that each power of $\delta $ is multiplied
by a polynomial in $\eta _{0}(t)$ and $\dot{\eta}_{0}(t).$

A more careful study reveals even more about the structure. Consider $\left( %
\ref{solution}\right) $ again. We see that taking any power of $x$ gives a
series of terms of the type%
\[
\delta ^{2k}P_{k}(t),
\]%
where $P_{k}$ is a quasi-homogeneous polynomial in $\eta _{0}$ of order $k.$
When taking powers of $y$ we note that%
\[
\eta _{1}^{k}=\dot{\eta}_{0}^{k}=\left\{ 
\begin{array}{c}
(\eta _{0}^{2}-\eta _{0}^{3})^{k/2},\quad p\text{ even\medskip } \\ 
\dot{\eta}_{0}(\eta _{0}^{2}-\eta _{0}^{3})^{(k-1)/2},\quad p\text{ odd}%
\end{array}%
\right. .
\]%
Bearing this in mind an even power of $y$ gives a series with terms of the
type%
\[
\delta ^{2k}Q_{\leq k}(t),
\]%
where $Q_{\leq k}(t)$ is a homogeneous polynomial in $\eta _{0}(t)$ of
order \emph{at most }$k.$ Similarly, taking an odd power of $y$ gives a
series with terms of the type%
\[
\delta ^{2k+1}Q_{\leq k-1}(t),
\]%
where $Q_{\leq k-1}(t)$ is a polynomial in $\eta _{0}(t)$
of order \emph{at most }$k-1.$ Based on these observations we conclude that
the form of the formal separatrix $\mathbf{X}$ is
\begin{equation}\label{Eq:Derived_formal}
\mathbf{X}(t,\varepsilon )=\left( 
\begin{array}{c}
\sum\limits_{p\geq 1}\delta ^{2p}x_{p}^{1}+\dot{\eta}_{0}(t)\sum\limits_{p%
\geq 1}\delta ^{2p+1}x_{p-1}^{2}\medskip  \\ 
\dot{\eta}_{0}(t)\sum\limits_{p\geq 1}\delta
^{2p+1}y_{p-1}^{2}+\sum\limits_{p\geq 2}\delta ^{2p}y_{p}^{1}%
\end{array}%
\right) ,
\end{equation}%
where $x_{p}^{1},x_{p}^{2},y_{p}^{1},y_{p}^{2}$ denotes 
polynomials in $\eta _{0}(t)$ of order $p.$ This finishes the proof of the
theorem.

\subsection{Re-expansion near the singularity}\label{Se:laurentexp}

In this section we will derive equation $\left( \ref{Eq:formal_laurent}\right),$ i.e.
we will expand the formal separatrix $\mathbf{X}$ in its Laurent series
around the singularity $t=i\pi .$ We do this by first expanding the two
"base" functions $\eta _{0}(t)$ and $\dot{\eta}_{0}(t)$ into their Laurent
series around the singularity. The Laurent series of $\eta _{0}(t)$ 
at $t=i\pi $ is given by%
\begin{equation*}
\eta _{0}(t-i\pi )=-\frac{4}{t^{2}}\left( 1+\sum_{k=1}^{\infty
}c_{k}t^{2k}\right) ,
\end{equation*}%
where $c_{k}\in \mathbb{R}$ $\forall k\in \mathbb{N}.$ The distance to the
next singularity is $2\pi $, which is then the radius of convergence for
this series. Secondly we note that the Laurent expansion of $\eta _{1}(t)$
at $t=i\pi $ is given by%
\begin{equation*}
\eta _{1}(t-i\pi )=\frac{8}{t^{3}}\left( 1+\sum_{k=1}^{\infty
}d_{k}t^{2k}\right) ,
\end{equation*}%
where $d_{k}\in \mathbb{R}$ $\forall k\in \mathbb{N}$ and it converges for $%
\left\vert t\right\vert <2\pi .$

We are now ready to determine the Laurent expansion of $\left( \ref%
{Eq:Derived_formal}\right) .$ Let us consider its first component which we write as%
\begin{equation}
\left[ \mathbf{X}\right] _{n-1}=\sum_{k=2}^{n-1}\delta ^{k}\psi _{k}^{1}(t)
\label{FormalSep_2}
\end{equation}%
where $\psi _{k}^{1}(t)$ is the term corresponding to $\delta ^{k}$ in the
formal separatrix $\left( \ref{Eq:Derived_formal}\right) .$ $\psi _{k}^{1}\,$\ has a
pole of order $k.$ Using the Laurent expansions of $\eta_{0}(t)$ and $\eta_{1}(t)$ 
respectively, the Laurent expansion of each $\psi _{k}^{1}(t)$ is
given by%
\begin{equation*}
\psi _{k}^{1}(t)=\frac{1}{(t-i\pi )^{k}}\sum_{j=0}^{\infty }(t-i\pi
)^{2j}\psi _{kj},
\end{equation*}%
where $\psi _{kj}$ are the coefficients of the Laurent expansion of $\psi _{k}^{1}(t)$. 
Next we substitute $\tau =\frac{t-i\pi }{\log \lambda },$ where we re-expand $\log
\lambda =\delta \sum_{l\geq 0}\lambda _{l}\delta ^{2l},$ this gives%
\begin{equation*}
\psi _{k}^{1}(t)=\frac{1}{\tau ^{k}\delta ^{k}(\sum_{l\geq 0}\lambda
_{l}\delta ^{2l})^{k}}\sum_{j=0}^{\infty }\tau ^{2j}\delta ^{2j}(\sum_{l\geq
0}\lambda _{l}\delta ^{2l})^{2j}\psi _{kj}.
\end{equation*}%
Inserting this expression into $\left( \ref{FormalSep_2}\right) $ we note
that $\delta ^{-k}$ and $\delta ^{k}$ cancel each other and we obtain an expansion in
even powers of $\delta $%
\begin{equation*}
\sum_{k=2}^{n-1}\delta ^{k}\psi _{k}^{1}(t)=\sum_{k=2}^{n-1}\frac{1}{\tau
^{k}}\sum_{j=0}^{\infty }\tau ^{2j}\delta ^{2j}(\sum_{l\geq 0}\lambda
_{l}\delta ^{2l})^{2j-k}\psi _{kj}.
\end{equation*}%
Since $\lambda (\delta )$ is even, analytic at $\delta =0$ and $\lambda (0)=1
$ we use that $\lambda ^{m}$ expands as\footnote{Do we need all these properties 
to draw this conclusion?}%
\begin{equation*}
\lambda ^{m}=\left( \sum_{l=0}^{\infty }\lambda _{l}\delta ^{2l}\right)
^{m}=\sum_{l=0}^{\infty }\lambda _{m,l}\delta ^{2l}
\end{equation*}%
to obtain 
\begin{equation*}
\sum_{k=2}^{n-1}\delta ^{k}\psi _{k}^{1}(t)=\sum_{k=2}^{n-1}\frac{1}{\tau
^{k}}\sum_{j=0}^{\infty }\sum_{l=0}^{\infty }\psi _{kj}\tau ^{2j}\delta
^{2(l+j)}\lambda _{2j-k,l}.
\end{equation*}%
We will now change the summation indices in two steps to simplify this
expression. First we let $m=j+l$ which allow us to write the sums as%
\begin{equation*}
\sum_{k=2}^{n-1}\delta ^{k}\psi _{k}^{1}(t)=\sum_{k=2}^{n-1}\sum_{m\geq
0}^{{}}\delta ^{2m}\sum_{j+l=m}^{{}}\psi _{kj}\tau ^{2j-k}\lambda _{2j-k,l}.
\end{equation*}%
Next we let $p=2j-k$ which gives us the desired result%
\begin{eqnarray*}
\sum_{k=2}^{n-1}\delta ^{k}\psi _{k}^{1}(t) &=&\sum_{m\geq 0}\delta
^{2m}\sum_{k=2}^{n-1}\sum_{\substack{ 2j-k=p \\ l+j=m}}\tau ^{p}\psi
_{kj}\lambda _{p,l} \\
&=&\sum_{m\geq 0}\delta ^{2m}\sum_{p=-n+1}^{2m-2}\tau ^{p}\widetilde{\psi }%
_{m,p}^{(1)}.
\end{eqnarray*}%
The derivation of the Laurent expansion of the second component of $\left( %
\ref{Eq:Derived_formal}\right) $ is analogous. The result is
\begin{equation}
[\mathbf{X}]_{n-1}=\sum_{k=2}^{n-1}\delta ^{k}\psi _{k}(t)=\left( 
\begin{array}{c}
\sum\limits_{m\geq 0}\delta ^{2m}\sum\limits_{p=-n+1}^{2m-2}\tau ^{p}%
\widetilde{\psi }_{m,p}^{(1)}\bigskip  \\ 
\sum\limits_{m\geq 0}\delta ^{2m}\sum\limits_{p=-n+1}^{2m-3}\tau ^{p}%
\widetilde{\psi }_{m,p}^{(2)}%
\end{array}%
\right) .  \label{LaurentSeries}
\end{equation}%
This finishes the derivation of the Laurent expansion of the formal separatrix
around the singularity $t=i\pi$.

\section{Close to identity maps\label{Se:closetoid}}

In this section study an analytic family of close to identity maps
of the form
\begin{equation}\label{Eq:closetoid}
F_{\delta }(\mathbf{x})=%
\mathbf{x}+\delta G_{\delta }(\mathbf{x})
\end{equation}
where $\delta $ is a small parameter. 
Expanding a solution of the differential equation 
$\dot{\mathbf{x}}=G_{0}(\mathbf{x})$ into Taylor series in time,
we can easily check that 
$F_\delta$ is approximated by a map which shifts
a point along a trajectory of the differential equation
by time $\delta$:
\[
F_{\delta}=\Phi^\delta_{G_0}+O(\delta^2)\,.
\]
We say that $G_0$ generate {\em the limit flow\/} associated with the
map $F_{\delta }$.

If $F_{\delta }$ is a family of 
area-preserving maps then the limit flow is divergence free and 
has a (possibly local) Hamiltonian function $H_{0}$, i.e.,
\[
G_{0}=\mathrm{J}\nabla H_{0}
\]
where $\mathrm{J}$ is the standard two dimensional  symplectic matrix.

\subsection{Fixed points and their multipliers\label{Se:fpm}}

\begin{theorem}
\label{Existence_FixedPoint}
If $\mathbf{p}_{0}$ is a hyperbolic saddle of the limit flow $G_{0}$ 
then there is a unique smooth family $\mathbf{p}_\delta$ of fixed points 
of $F_{\delta }$ which converges to $\mathbf{p}_{0}$ as $\delta \rightarrow 0.$ 
Moreover, the fixed point depends analytically on $\delta $ and is hyperbolic with multiplier
$\lambda_\delta$. The multiplier depends analytically on $\delta$ and
\begin{equation*}
\lambda _{\delta }=1+\delta \mu_0+O(\delta ^{2}),
\end{equation*}%
where $\mu_0$ is an  eigenvalue of $DG_{0}(\mathbf{p}_{0})$.
\end{theorem}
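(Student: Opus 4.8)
The plan is to solve the fixed-point equation by the analytic implicit function theorem, after first removing the degeneracy that is present at $\delta=0$. Since $F_\delta=\mathrm{id}+\delta G_\delta$, the equation $F_\delta(\mathbf{x})=\mathbf{x}$ reads $\delta G_\delta(\mathbf{x})=0$, so for $\delta\ne0$ it is equivalent to $G_\delta(\mathbf{x})=0$; this is the equation I would actually solve (one cannot apply the implicit function theorem to $F_\delta(\mathbf{x})-\mathbf{x}=0$ directly, because $DF_0=I$ makes it totally degenerate at $\delta=0$, and dividing out the explicit factor $\delta$ is the essential point). Set $\Xi(\mathbf{x},\delta):=G_\delta(\mathbf{x})$, which is analytic in $(\mathbf{x},\delta)$ near $(\mathbf{p}_0,0)$ because $F_\delta$ is. Since $\mathbf{p}_0$ is an equilibrium of the limit flow, $\Xi(\mathbf{p}_0,0)=G_0(\mathbf{p}_0)=0$; since $\mathbf{p}_0$ is \emph{hyperbolic}, the eigenvalues of $D_{\mathbf{x}}\Xi(\mathbf{p}_0,0)=DG_0(\mathbf{p}_0)$ are non-zero, so this matrix is invertible. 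The analytic implicit function theorem then produces a unique analytic (in particular smooth) family $\delta\mapsto\mathbf{p}_\delta$ defined for small $|\delta|$, with $\mathbf{p}_\delta\to\mathbf{p}_0$ and $G_\delta(\mathbf{p}_\delta)\equiv0$, hence $F_\delta(\mathbf{p}_\delta)=\mathbf{p}_\delta$ for $\delta\ne0$; local uniqueness of the zero of $\Xi$ gives uniqueness of the family.

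For the multiplier, differentiate: $DF_\delta(\mathbf{p}_\delta)=I+\delta A_\delta$, where $A_\delta:=DG_\delta(\mathbf{p}_\delta)$ depends analytically on $\delta$ and $A_0=DG_0(\mathbf{p}_0)$. Since $\mathbf{p}_0$ is a hyperbolic saddle, $A_0$ has two real eigenvalues of opposite sign; denote the positive one by $\mu_0$, and note that they are distinct, hence simple (when the maps are moreover area-preserving the limit flow is divergence free, $\mathrm{tr}\,A_0=0$, and the eigenvalues are exactly $\pm\mu_0$). A simple eigenvalue perturbs analytically, so applying the analytic implicit function theorem to $q(\nu,\delta):=\det(\nu I-A_\delta)=0$ at the \emph{simple} root $\nu=\mu_0$ yields an analytic function $\nu_+(\delta)$ with $\nu_+(0)=\mu_0$ that is an eigenvalue of $A_\delta$ and stays real for small real $\delta$ (here it matters to work with $A_\delta$ rather than with $DF_\delta$, for which $\nu=1$ is a \emph{double} root at $\delta=0$). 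Hence
\begin{equation*}
\lambda_\delta:=1+\delta\,\nu_+(\delta)=1+\delta\mu_0+O(\delta^2)
\end{equation*}
is an analytic multiplier of $DF_\delta(\mathbf{p}_\delta)$. The other multiplier, coming from the eigenvalue of $A_\delta$ near the negative eigenvalue of $A_0$ (equivalently, in the area-preserving case, the reciprocal of $\lambda_\delta$, since $\det DF_\delta(\mathbf{p}_\delta)=1$), is $<1$ for small $\delta>0$; the two multipliers are real, distinct, and on opposite sides of $1$, so $\mathbf{p}_\delta$ is a hyperbolic fixed point.

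There is no real obstacle in this theorem: everything reduces to the analytic implicit function theorem, and analyticity in $\delta$ is inherited from the assumed analytic dependence of $F_\delta$ (hence of $G_\delta$, $A_\delta$) on $\delta$. The one point that must be handled carefully is the two degeneracies that appear at $\delta=0$ — the fixed-point equation becomes trivial and the would-be multiplier equation has a double root — both of which are removed by factoring out the explicit $\delta$ in $F_\delta=\mathrm{id}+\delta G_\delta$ and in $DF_\delta=I+\delta A_\delta$, leaving genuinely non-degenerate problems for $G_\delta$ and $A_\delta$.
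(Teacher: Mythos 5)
Your proposal follows essentially the same route as the paper: reduce $F_\delta(\mathbf{x})=\mathbf{x}$ to $G_\delta(\mathbf{x})=0$, apply the analytic implicit function theorem using invertibility of $DG_0(\mathbf{p}_0)$ (which follows from hyperbolicity), then obtain the multiplier as $\lambda_\delta=1+\delta\mu(\delta)$ by perturbing the simple eigenvalue $\mu_0$ of $DG_\delta(\mathbf{p}_\delta)$. The only cosmetic difference is that you argue simplicity via the implicit function theorem applied to the characteristic polynomial while the paper appeals directly to analytic dependence of simple eigenvalues and eigenvectors; both are standard and correct.
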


\begin{proof}
The theorem easily follows from the implicit function theorem.
Indeed, by the definition of a fixed point we have
\begin{equation*}
\mathbf{p}_{\delta }=F_{\delta }(\mathbf{p}_{\delta }),
\end{equation*}%
which is equivalent to
\begin{equation}
G_{\delta }(\mathbf{p}_{\delta })=\mathbf{0}  \label{G}
\end{equation}
due to the form of the map (\ref{Eq:closetoid}).
Since $F_{\delta }$ is an analytic function, $G_{\delta }$ is analytic. 
Since $\mathbf{p}_{0}$ is an equilibrium of the limit flow
we have
\begin{equation*}
G_{0}(\mathbf{p}_{0})=\mathbf{0}\,.
\end{equation*}%
Since $\mathbf{p}_{0}$ is a saddle equilibrium of $G_0$ one of the eigenvalues
of $DG_{0}(\mathbf{p}_{0})$ is positive and the other one is negative. Therefore
\begin{equation*}
\det DG_{0}(\mathbf{p}_{0})<0\,,
\end{equation*}
and in particular $\det DG_{0}(\mathbf{p}_{0})\ne0$.
Then by the
implicit function theorem there exists an analytic solution of $\left( \ref%
{G}\right) $ for all $\delta $ such that $\left\vert \delta \right\vert
<\delta _{0}.$

Let us now consider the eigenvalues and eigenvectors of the fixed point. Let%
\begin{equation*}
\mathrm{A}_{\delta }:=DF_{\delta }(\mathbf{p}_{\delta })=\mathrm{id}+\delta\, DG_{\delta }(%
\mathbf{p}_{\delta }).
\end{equation*}
The eigenvalues of $DG_{0}(\mathbf{p}_{0})$ are simple, 
hence eigenvalues and normalised eigenvectors of 
$DG_{\delta }(\mathbf{p}_{\delta })$ are analytic in $\delta$.
The matrices $DG_{\delta }(\mathbf{p}_{\delta })$
and $\mathrm{A}_{\delta }$ have the same eigenvectors.
Moreover, if $\mu (\delta )$ is an eigenvalue of 
$DG_{\delta }(\mathbf{p}_{\delta })$, then 
$\lambda _{\delta}=1+\delta \mu (\delta )$ 
is an eigenvalue of $\mathrm{A}_{\delta }$.
\end{proof}

\subsection{Formal interpolation by a flow\label{Se:formint}}

The results of this subsection are of independent interest.
We show that the map $F_\delta$ can be formally interpolated 
by an autonomous Hamiltonian flow. 

\begin{proposition} Let $F_\delta$ be an analytic family
of area-preserving maps {\rm (\ref{Eq:closetoid})}
defined on a simply connected domain $D\subset \mathbb{R}^2$ 
(or $D\subset \mathbb{C}^2$), 
then there is a formal Hamiltonian $H_\delta$ with coefficients analytical on $D$,
which formally interpolates $F_\delta$. The Hamiltonian $H_\delta$ is
defined uniquely up to addition of a formal series in $\delta$
only.
\end{proposition}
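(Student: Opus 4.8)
The plan is to build the interpolating Hamiltonian $H_\delta=\sum_{k\ge0}\delta^kH_k$ with each $H_k$ analytic on $D$ by induction on the order in $\delta$, adjusting one coefficient at a time so that the time-$\delta$ map matches $F_\delta$; this is the close-to-identity analogue of the construction of Section~\ref{Se:formalinterpol}, with the grading by powers of $\delta$ in place of the quasi-homogeneous one. For a formal Hamiltonian $\chi=\sum_k\delta^k\chi_k$ the relevant object is
\[
\Phi_\chi^\delta(\mathbf{x})=\exp(\delta L_\chi)\,\mathbf{x}
=\mathbf{x}+\delta\,\mathrm{J}\nabla\chi+\frac{\delta^2}{2}L_\chi(\mathrm{J}\nabla\chi)+\cdots,
\]
which is a well-defined formal power series in $\delta$ whose coefficients are analytic on $D$, and which is area-preserving at every order since each $\Phi^\delta$ is.

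First I would set up the induction hypothesis: $H_0,\dots,H_{n-1}$ have been chosen so that, with $H^{(n-1)}_\delta=\sum_{k=0}^{n-1}\delta^kH_k$, one has $\Phi_{H^{(n-1)}_\delta}^\delta=F_\delta+\delta^{n+1}E_n+O(\delta^{n+2})$ for some vector field $E_n$ with components analytic on $D$; the start $n=0$ is trivial, with $H^{(-1)}_\delta=0$, $\Phi^\delta_0=\mathrm{id}$ and $E_0=-G_0$. For the step I would observe that $F_\delta\circ(\Phi_{H^{(n-1)}_\delta}^\delta)^{-1}=\mathrm{id}-\delta^{n+1}E_n+O(\delta^{n+2})$ is area-preserving, so comparing Jacobian determinants at order $\delta^{n+1}$ gives $\mathrm{div}\,E_n=0$ — the exact counterpart of the lemma in Section~\ref{Se:formalinterpol}. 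Since $D$ is simply connected, the Poincar\'e lemma yields an analytic $H_n$ on $D$ with $\mathrm{J}\nabla H_n=-E_n$, unique up to an additive constant. Adding $\delta^nH_n$ to the Hamiltonian alters the time-$\delta$ map only at orders $\ge\delta^{n+1}$, and at exactly that order by $\delta^{n+1}\mathrm{J}\nabla H_n$, so $\Phi_{H^{(n-1)}_\delta+\delta^nH_n}^\delta=F_\delta+O(\delta^{n+2})$, which advances the induction. The coefficients $H_k$ thus produced assemble into a formal Hamiltonian $H_\delta$ with $\Phi_{H_\delta}^\delta=F_\delta$.

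For the uniqueness statement I would argue inductively that two formal interpolants $H_\delta$, $\tilde H_\delta$ satisfy $H_k-\tilde H_k=\mathrm{const}$ for all $k$: the obstruction at each order depends on the lower coefficients only through their gradients and mutual Poisson brackets, which are insensitive to additive constants, so the order-$\delta^{n+1}$ matching equation forces $\mathrm{J}\nabla H_n=\mathrm{J}\nabla\tilde H_n$. Hence $H_\delta$ is determined up to addition of a formal series $\sum_k c_k\delta^k$ with constant coefficients $c_k$, as claimed.

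The only place where anything beyond formal bookkeeping is used — and the step I expect to be the crux — is the vanishing of $\mathrm{div}\,E_n$: this is precisely where the area-preservation of $F_\delta$ enters, and it is what makes the cohomological equation $\mathrm{J}\nabla H_n=-E_n$ solvable, the \emph{global} solvability on $D$ being guaranteed by simple connectivity. Everything else reduces to checking that Poisson brackets and the line-integral reconstruction of $H_n$ from $E_n$ preserve analyticity on $D$, and that the powers of $\delta$ are accounted for correctly.
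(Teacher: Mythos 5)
Your argument is correct and runs along essentially the same lines as the paper's: induction in the order of $\delta$, Lie-series bookkeeping, identification of the obstruction with a vector field at order $\delta^{n+1}$, and use of area-preservation plus simple connectivity to solve the cohomological equation $\mathrm{J}\nabla H_n=-E_n$. The one place you diverge from the paper is in how you establish $\operatorname{div}E_n=0$: the paper writes both maps in coordinates, expands the area-preservation identity, and observes that the divergence at order $n+1$ depends only on the lower-order coefficients, which agree by induction; you instead form the close-to-identity map $F_\delta\circ(\Phi^\delta_{H^{(n-1)}_\delta})^{-1}=\mathrm{id}-\delta^{n+1}E_n+O(\delta^{n+2})$ and read off $\operatorname{div}E_n=0$ from $\det D(\cdot)\equiv 1$. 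That is a slightly cleaner, more coordinate-free route to the same conclusion, at the cost of a small extra check that the formal inverse is well defined and that the composition is area-preserving order by order — both of which you rightly take as routine. Everything else (base case $H_0$ being the Hamiltonian of the limit flow, the order-$\delta^{n+1}$ contribution of $\delta^nH_n$ being exactly $\delta^{n+1}\mathrm{J}\nabla H_n$, uniqueness up to constants at each order) matches the paper's reasoning.
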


\begin{proof}
For a function $F_\delta$ let us denote by $[F_\delta]_n$ the $n^{\mathrm{th}}$-order
coefficient of the Taylor series in $\delta$. In coordinates we write
$$
[F_\delta]_n=(f_n(x,y),g_n(x,y)).
$$
We construct the formal series
$$
H_\delta(x,y,\delta)=\sum_{k\ge0} \delta^k H_k(x,y),
$$
such that for every $n$
\begin{equation}\label{Eq:condition}
[\Phi^\delta_{H_\delta^n}]_{\ell }=[F_\delta]_\ell
\qquad\mbox{for all $1\le \ell\le n$}
\end{equation}
where $\Phi^t_{H_\delta^n}$ is the Hamiltonian flow
generated by the Hamiltonian
\begin{equation}\label{Eq:Hdn}
H_{\delta}^n(x,y)=\sum_{k=0}^{n-1} \delta^k H_k(x,y).
\end{equation}
It is convenient to write the Hamiltonian flow
generated by a Hamiltonian $H_\delta^n$
using Lie series
\[
\Phi_{H_\delta^n}^\delta(\p)=\p+\sum_{n=1}^\infty \frac{\delta^n L_{H_\delta^n}^n(\p)}{n!}\,,
\]
where $L_{H_\delta^n}$ is the Lie derivative generated by the Hamiltonian ${H_\delta^n}$.
The Lie derivative is linear in ${H_\delta^n}$.
Then taking into account (\ref{Eq:Hdn}) we get
\begin{equation}\label{Eq:Lee_n}
\left[\Phi^\delta_{H_\delta^{n}}\right]_{\ell}=
\sum_{m=1}^{\ell}\frac1{m!} \sum_{\substack{{n_1+n_2+\ldots n_m=\ell-m}\\0\le n_i\le n-1}}
 L_{H_{n_1}}\dots L_{H_{n_m}}(\p)\,.
\end{equation}
We construct $H_n$ inductively. Let $n=0$. Then (\ref{Eq:condition})
takes the form 
\[
[\Phi^\delta_{H_\delta^1}]_{1 }=[F_\delta]_1\,.
\]
Taking into account (\ref{Eq:Lee_n}) we rewrite it in the form
$$
L_{H_{0}}(\p)=G_0.
$$
Taking into account that in coordinates $\p=(x,y)$ and
\[
L_{H_{0}}(\p)=\left(\frac{\partial H_0}{\partial y},-\frac{\partial H_0}{\partial x}
\right)
\]
we see that (\ref{Eq:condition}) is satisfied for $n=0$ provided $H_0$ 
is the Hamiltonian of the limit flow.

We continue by induction. Suppose $H_0,\ldots,H_{n-1}$ are chosen in such a way
that the condition (\ref{Eq:condition}) is satisfied for some $n\ge 0$.
We show that $H_n$ can be chosen in such a way that (\ref{Eq:condition}) is satisfied for
$n$ replaced by $n+1$.

We note that (\ref{Eq:Lee_n}) implies that 
\[
\left[\Phi^\delta_{H_\delta^{n}}\right]_{\ell}=\left[\Phi^\delta_{H_\delta^{n+1}}\right]_{\ell}
\qquad\mbox{for $1\le \ell\le n$}
\]
and 
\[
\left[\Phi^\delta_{H_\delta^{n}}\right]_{n+1}=\left[\Phi^\delta_{H_\delta^{n+1}}\right]_{n+1}+L_{H_n}(\p)\,.
\]
Then the induction assumption implies that 
\[
\left[\Phi^\delta_{H_\delta^{n+1}}\right]_{\ell}=
\left[F_\delta\right]_{\ell}
\qquad\mbox{for $1\le \ell\le n$}\,.
\]
For $\ell=n+1$ the equality is achieved provided $H_n$ satisfies
the equation
\begin{equation}\label{Eq:Hn}
L_{H_n}(\p)=
\left[\Phi^\delta_{H_\delta^{n}}\right]_{n+1}-
\left[F_\delta\right]_{n+1}\,.
\end{equation}
This equation has a solution in $\mathcal D$  provided the right hand side
is divergence free. The solution is defined uniquely up
to addition of a constant. 

In order to check that the right hand side has indeed zero divergence,
 we make a simple observation.
Let $F_\delta$ and $\tilde F_\delta$ be two analytic families of
close-to-identity area preserving maps such that $[F_\delta]_\ell=[\tilde F_\delta]_{\ell}$
for all $0\le\ell\le n$. Then  
\begin{equation}\label{Eq:diveq}
\func{div}[F_\delta-\tilde F_\delta]_{n+1}=0.
\end{equation}
Indeed, let us write the maps in the coordinates:
\[
F_\delta(x,y)=\left(x+\sum_{k\ge1}\delta^ka_k(x,y), y+\sum_{k\ge1}\delta^k b_k(x,y)\right)
\]
and a similar formula for $\tilde F_\delta$. The preservation of the area is equivalent to the system
\begin{eqnarray*}
\partial_xa_1+\partial_y b_1&=&0,\\
\partial_x a_{n+1}+\partial_y b_{n+1}&=&-\sum_{\ell=1}^{n }
\{a_\ell,b_{n+1-\ell}\}, \qquad n\ge1.
\end{eqnarray*}
The system was obtained by expanding the Jacobian of $F_\delta$
in power series of $\delta$ and comparing the result with $1$.
Since $a_\ell=\tilde a_\ell$ and $b_\ell=\tilde b_\ell$ for $1\le \ell\le n$,
we immediately see that
\begin{equation}
\partial_x a_{n+1}+\partial_y b_{n+1}=
\partial_x \tilde a_{n+1}+\partial_y \tilde b_{n+1}
\end{equation}
which is equivalent to (\ref{Eq:diveq}).

Setting $\tilde F_\delta=\Phi^\delta_{H_\delta^{n}}$
and applying the last statement 
we see that the right hand side of (\ref{Eq:Hn})
has zero divergence and consequently 
$H_n$ is defined by the equation. The theorem follows by induction.
\end{proof}

\subsection{Approximation of the local separatrix\label{Se:prooflocalsep}}

In this section we prove that the local separatrix of
the map $F_{\delta }$ is close to a local separatrix
separatrix of any Hamiltonian flow which is $O(\delta ^{n+1})$ close to 
$F_{\delta }.$ The approximation is of order $O(\delta ^{n})$. We note that
the existence of a local separatrix near a hyperbolic fixed point of the map
and a hyperbolic equilibrium of the flow follows from the Hartman-Grobman
theorem.%
\footnote{The proof does not really uses the fact that $F$ is area-preserving
and the flow is Hamiltonian.}

The proof consists of two steps. First we transform the fixed
point of the map to the origin and diagonalise its linear part by
a linear symplectic change of variables. We also transform the Hamiltonian flow
into a similar form. Then we prove the theorem for the
transformed map and the transformed flow. Then we come back to the original
variables.

Theorem~\ref{Existence_FixedPoint} implies the map $F_{\delta }$ has a fixed
point $\mathbf{p}_{\delta }$ which is analytic in $\delta .$ Consider the
following change of variables%
\begin{equation*}
S:\mathbf{x}\mapsto \mathbf{p}_{\delta }+\mathrm{B}_{\delta }\mathbf{x},
\end{equation*}%
where $\mathrm{B}_{\delta }$ is a symplectic matrix which diagonalises $DF_{\delta }(%
\mathbf{p}_{\delta }).$ This change of variables transforms $F_{\delta }$ to 
\begin{equation*}
\widetilde{F}_{\delta }=S^{-1}\circ F_{\delta }\circ S\,.
\end{equation*}%
The change of variables is chosen to ensure that the fixed point
is moved to the origin
\begin{equation*}
\widetilde{F}_{\delta }(\mathbf{0})=\mathbf{0,}
\end{equation*}%
and the Jacobian of the map at the origin is diagonal:
\begin{equation*}
D\widetilde{F}_{\delta }(\mathbf{0})=\mathrm{A}_\delta:=\func{diag}\left( \lambda _{\delta },\lambda
_{\delta }^{-1}\right) .
\end{equation*}%
Note that the multipliers of the fixed point are left unchanged under any smooth change of variables.
After the change of coordinates the unstable separatrix is parametrised by the function
\begin{equation*}
\psi _{\delta }^{-}=S\circ \widetilde{\psi }{}_{\delta }^{-}
\end{equation*}
which satisfies the equation
\begin{equation}
\widetilde{\psi }{}_{\delta }^{-}(t+\log \lambda _{\delta })=\widetilde{F}%
_{\delta }(\widetilde{\psi }{}_{\delta }^{-}(t))\,.
\label{Separatrix_flow}
\end{equation}
We note that the vector field generated by $H^n_\delta$ has an equilibrium $\mathbf{q}_\delta$
which is close to $\mathbf{p}_\delta$ but does not necessary coincides with it.
In general the linear part of the flow is not exactly diagonal after the substitution $S$.
So we introduce another change of variables
\begin{equation*}
\widehat{S}:\mathbf{x}\mapsto \mathbf{q}_{\delta }+\widehat{\mathrm{B}}_{\delta }%
\mathbf{x,}
\end{equation*}%
where  $\widehat{\mathrm{B}}_{\delta }$ is a
symplectic matrix that diagonalises 
$\mathrm{J}\nabla ^{2}H_{\delta }^{n}(\mathbf{q}_{\delta }).$ 
Using the implicit function theorem and smooth dependence of eigenvectors
in a way similar to Section~\ref{Se:fpm} we conclude that
\begin{eqnarray}
\mathbf{p}_{\delta } &=&\mathbf{q}_{\delta }+O(\delta ^{n}),
\label{closeness_pandB} \\
\mathrm{B}_{\delta } &=&\widehat{\mathrm{B}}_{\delta }+O(\delta ^{n}).  \notag
\end{eqnarray}
Then we define a new Hamiltonian function
\begin{equation*}
\widetilde{H}_{\delta }^{n}=H_{\delta }^{n}\circ \widehat{S}.
\end{equation*}
Let ${\widetilde{\varphi}}_\delta^{n}$ denote the separatrix solution
of the differential equation
\begin{equation}
\mu _{n,\delta }\,\dot{\widetilde{\varphi}}
{\vphantom{widetilde{\varphi}}}_\delta^{n}
=\mathrm{J}\nabla \left. \widetilde{H}
_{\delta }^{n}\right\vert _{\widetilde{\varphi }_\delta^{n}}\,.
\label{Separatrix_map}
\end{equation}
The original separatrix solution can be easily restored by the formula
\begin{equation*}
\varphi _{\delta }^{n}=\widehat{S}\circ \widetilde{\varphi }_{\delta }^{n}.
\end{equation*}
Let us write $\tilde \Phi_\delta:=\Phi^{\log\lambda_\delta}_{\mu_\delta^{-1}\tilde H_\delta^n }$
to shorten the notation. By the definition of the Hamiltonian flow,
\begin{equation}\label{Eq:tildephi}
\widetilde{\varphi }_{\delta }^{n}(t+\log\lambda_\delta)=
\tilde\Phi_\delta
(\widetilde{\varphi }_{\delta }^{n}(t))\,.
\end{equation}
It is easy to see that 
\[
\tilde\Phi_\delta=\Phi^{\log\lambda_\delta}_{\mu_{n,\delta}^{-1}\tilde H_\delta^n }
=
\Phi^{1}_{\mu_{n,\delta}^{-1}\log\lambda_\delta\tilde H_\delta^n }
=
\hat S^{-1}\circ \Phi^{1}_{\mu_{n,\delta}^{-1}\log\lambda_\delta H_\delta^n }
\circ\hat S
=\tilde F_\delta+O(\delta^{n+1})\,,
\]
where we have taken into account that
$\mu_{n,\delta}^{-1}\log\lambda_\delta=1+O(\delta^n)$
implies 
\[
\mu_{n,\delta}^{-1}\log\lambda_\delta H_\delta^n =H_\delta^n+O(\delta^{n+1}).
\]
We have got
\begin{equation}\label{Eq:tildeFF}
\tilde\Phi_\delta=\tilde F_\delta+O(\delta^{n+1})\,,
\end{equation}
which will play a crucial role in the proof.

\medskip

Now we define
\begin{equation}
\xi (t):=\widetilde{\psi}{}_{\delta }^{-}(t)
-\widetilde{\varphi}_{\delta}^{n}(t).  \label{Def_xi}
\end{equation}
It follows from the definition of $\xi (t)$ and equations (\ref{Eq:tildephi})
and (\ref{Separatrix_flow}) that
\begin{equation}
\xi (t+\log \lambda _{\delta })=
\widetilde{F}_{\delta }(\widetilde{\varphi }_{\delta }^{n}(t)+\xi (t))
-\tilde \Phi _{\delta }(\widetilde{\varphi }_{\delta }^{n}(t)).
\label{eqn_xi}
\end{equation}
We are proving that there exists a $\xi (t)$ such that%
\begin{equation}
\xi (t)=O(\delta ^{n}e^{2t}).  \label{closeness_separatrices}
\end{equation}

First we rewrite the equation for $\xi$ in the form
\begin{equation}
\xi (t+\log \lambda _{\delta })-\mathrm{A}_{\delta }\xi (t)=R(t,\xi (t))\,,
\label{Solve_xi}
\end{equation}
where
\[
R(t,\xi (t)):=
\widetilde{F}_{\delta }(\widetilde{\varphi }_{\delta }^{n}(t)+\xi (t))
-\tilde \Phi _{\delta }(\widetilde{\varphi }_{\delta }^{n}(t))
-\mathrm{A}_{\delta }\xi (t).
\]
Expanding $\widetilde{F}_{\delta }$ in Taylor series around $\widetilde{\varphi }
_{\delta }^{n}(t)$ we get
\begin{equation}
R(t,\xi (t))=R_{0}(t)+R_{1}(t)\xi (t)+R_{2}(\xi (t)),  \label{def_R}
\end{equation}%
where%
\begin{eqnarray}
R_{0}(t) &:&=\widetilde{F}_{\delta }(\widetilde{\varphi }_{\delta
}^{n}(t))-\tilde\Phi _{\delta }(\widetilde{\varphi }_{\delta }^{n}(t)),  \notag \\
R_{1}(t) &:&=D\widetilde{F}_{\delta }(\widetilde{\varphi }_{\delta
}^{n}(t))-\mathrm{A}_{\delta },  \label{Eq:R2def} \\
R_{2}(\xi (t)) &:&=O(\left\Vert \xi (t)\right\Vert ^{2}),  \label{Order_R2}
\end{eqnarray}%
where the upper bound for $R_{2}(\xi (t))$ is a standard estimate for
the remainder of the Taylor expansion. Next we show that%
\begin{equation}
R_{0}(t)=O(\delta ^{n}e^{2t})  \label{Order_R0}
\end{equation}%
and%
\begin{equation}
R_{1}(t)=O(\delta e^{t}).  \label{Order_R1}
\end{equation}%
We begin with (\ref{Order_R1}). We note that since $\tilde F_\delta$ is close
to identity we have $\tilde F_\delta=\mathrm{id}+\delta \tilde G_\delta$ and consequently
$D\tilde F_\delta=\mathrm{id}+\delta D\tilde G_\delta$ and in particular
$\mathrm{A}_\delta=D\tilde F_\delta(0)=\mathrm{id}+\delta D\tilde G_\delta(0)$. Therefore
\[
D\widetilde{F}_{\delta }(\mathbf{x})-\mathrm{A}_{\delta }=
\delta \left(D\tilde G_\delta(\mathbf{x})- D\tilde G_\delta(0)\right)
=O(\delta  |\mathbf{x}|)\,.
\]
Substituting $\widetilde{\varphi }_{\delta }^{n}(t)$ instead of $\mathbf{x}$ and using
the upper bound $\widetilde{\varphi }_{\delta }^{n}(t)=O(e^t)$  as $\func{Re}(t)\rightarrow -\infty$,
we see the upper bound (\ref{Order_R1}) follows from the definition (\ref{Eq:R2def}).

The derivation of the upper bound for $R_{0}(t)$ is a bit more sophisticated.
First we note that
\[
\tilde F_\delta(0)=\tilde \Phi _{\delta }(0)=0
\qquad
\mbox{and}
\qquad
D\tilde F_\delta(0)=D\tilde \Phi _{\delta }(0)=\mathrm{A}_\delta\,.
\]
Consequently
\[
\tilde F_\delta(\mathbf{x})-\tilde \Phi _{\delta }(\mathbf{x})=O(|\mathrm{x}|^2)\,.
\]
Substituting $\widetilde{\varphi }_{\delta }^{n}(t)$ instead of $\mathbf{x}$
and taking into account the definition of $R_2$ we
conclude that the components of $\frac{R_{0}(t)}{\left(e^{t}\right) ^{2}}$ 
are bounded analytic function on $\func{Re}(t)\leq r_0$.
Moreover $\widetilde{\varphi }_{\delta }^{n}$ is $2\pi i$ periodic and analytic, therefore
each component of the vector $R_0(t)$ is $2\pi i$ is also periodic and analytic in $t$.
Consequently the maximum of $\frac{R_{0}(t)}{\left(e^{t}\right) ^{2}}$ is achieved at
the line $\func{Re}(t)=r_0$.\footnote%
{
The $2\pi i$ periodicity allows a substitution $z=e^t$ and an application of a maximum
modulus principle to the result.
} 
Using (\ref{Eq:tildeFF}) we get for $\Re t<r_0$
\begin{equation*}
\left\vert 
e^{-2t}R_{0}(t)
\right\vert \leq 
\sup_{\func{Re}(t)=r_0}
\frac{
\left\vert \widetilde{F}_{\delta }(\widetilde{\varphi }_{\delta}^{n}(t))-
\tilde\Phi_{\delta }(\widetilde{\varphi }_{\delta}^{n}(t))
\right\vert }{\left\vert e^{2t}\right\vert }
\leq
c_{2}\delta^{n+1}
\end{equation*}
which is equivalent to the desired estimate (\ref{Order_R0}).

Now we observe that if the following sum converges
\begin{equation*}
\xi (t)=\sum_{k\geq 1}\mathrm{A}_{\delta }^{k-1}f(t-k\log \lambda _{\delta }),
\end{equation*}
then it solves the difference equation
\[
\xi(t+\log\lambda_\delta)-\mathrm{A}_\delta\xi(t)=f(t)\,.
\]
Indeed, substituting the series into the left hand side of the equation we get 
\begin{eqnarray*}
\xi (t+\log \lambda _{\delta })-\mathrm{A}_{\delta }\xi &=&\sum_{k\geq 1}\mathrm{A}_{\delta
}^{k-1}f(t-(k-1)\log \lambda _{\delta })  \\
&&-\sum_{k\geq 1}\mathrm{A}_{\delta }^{k}f(t-k\log \lambda _{\delta })=f(t) .
\end{eqnarray*}
Thus instead of solving $\left( \ref{Solve_xi}\right) $ we solve the
``integral equation"%
\begin{equation}
\xi (t)=\sum_{k=1}^{\infty }
\mathrm{A}_{\delta }^{k-1}R(t-k\log \lambda _{\delta},\xi (t-k\log \lambda _{\delta })),  
\label{Solve_xi_integral}
\end{equation}%
where $R(t,\xi (t))$ is given by (\ref{def_R}). 
We will show that we can solve $\left( \ref{Solve_xi_integral}\right)$
by applying contraction mapping arguments in a ball in the Banach space
of functions which are analytic, $2\pi i$ periodic in a half plane 
$\func{Re}(t)<r_0$ and have bounded norm
\[
\|\xi\|=\sup_{\func{Re} (t)<r_{0}}\left\vert e^{-2t}\xi (t)\right\vert
\]
where $|\cdot|$ stands for the Euclidean norm in $\mathbb{C}^2$.
Let
\begin{equation}
\mathcal{B}=\left\{\; \xi :\left\Vert \xi \right\Vert  <K\delta ^{n}\;\right\} 
\label{def_ball}
\end{equation}%
where $K>0$ and $r_0\in\mathbb{R}$ will be chosen later in the proof.
We note that if $\xi \in \mathcal{B}$ then for all $\func{Re}(t)<r_0$
\begin{eqnarray*}
| \xi (t)| &\leq&\left\Vert \xi \right\Vert e^{2\func{Re}(t)} \\
				 &<&  K\delta ^{n}e^{2\func{Re}(t)}\,.
\end{eqnarray*}%
We will show that there exist $K$ and $r_{0}$ such that the ball $\mathcal{B}
$ is invariant under the operator in the right-hand side of $\left( \ref%
{Solve_xi_integral}\right) $ and that the operator is a contraction operator
on the ball $\mathcal{B}.$ Let us denote the operator in the right-hand side
of $\left( \ref{Solve_xi_integral}\right) $ by $\mathcal{F}.$ We begin by
showing that the ball $\mathcal{B}$ is invariant under $\mathcal{F}$, i.e. $%
\mathcal{F}(\xi )\in \mathcal{B}$ for any $\xi \in \mathcal{B}.$ Let $\xi
\in \mathcal{B}$ then%
\begin{equation}
\mathcal{F}(\xi )=\sum_{k=1}^{\infty }\mathrm{A}_{\delta }^{k-1}\left. \left(
R_{0}+R_{1}\xi +R_{2}(\xi)\right)
 \right\vert _{t-k\log\lambda _{\delta }}. 
 \label{operator_F}
\end{equation}
Using the upper bounds for $R_{0}(t),R_{1}(t)$ and $R_{2}(\xi (t))$ 
from equations (\ref{Order_R0}), (\ref{Order_R1}) and (\ref{Order_R2})  
respectively we obtain
\begin{eqnarray*}
\left|
\left. R_{0}\right\vert _{t-k\log \lambda _{\delta }} 
\right|
&\le&
c_{1}\delta^{n+1}e^{2\func{Re}(t)}\lambda _{\delta }^{-2k}, \\
\left|
\left. R_{1}\xi \right\vert _{t-k\log \lambda _{\delta }} 
\right|
&\le&c_{2}\delta e^{3\func{Re}(t)}\lambda _{\delta
}^{-2k}\|\xi\| , \\
\left|
\left. R_{2}(\xi)\right\vert _{t-k\log \lambda _{\delta }}
\right|
&=&c_{3}\delta ^{2n}K^{2}e^{4\func{Re}(t)}\lambda _{\delta }^{-4k},
\end{eqnarray*}%
where $c_{1},c_{2},c_{3}$ are constants. Since the norm of $\mathrm A_\delta$
does note exceed $\lambda_\delta$, we can use the above estimates to
get an upper bound for the left hand side of $\left( \ref{operator_F}\right) $:
\begin{equation}
\left\Vert \mathcal{F}(\xi )\right\Vert \leq c_{1}c_{4}\delta
^{n}e^{2r_{0}}+c_{2}c_{4}\delta ^{n}e^{3r_{0}}K\lambda_{\delta }
^{-1}+c_{3}c_{4}\lambda_{\delta }^{-2}\delta ^{2n-1}K^{2}e^{4r_{0}},
\label{closedness}
\end{equation}%
where $c_{4}=\sup_{|\delta|<\delta_0}\delta \frac{\lambda _{\delta }^{-2}}{1-\lambda _{\delta }}$ is
finite since $\frac{1}{1-\lambda _{\delta }}=O(\delta ^{-1}).$
To show that there exists a $K,r_{0}$ such that $\left\Vert \mathcal{F}(\xi
)\right\Vert \leq \delta ^{n}K$ for all $\delta <\delta _{0}$ we start by
choosing $K=4c_{1}c_{4}e^{2r_{0}}.$ Then by choosing $r_{0}=\frac{1}{3}\log 
\frac{\lambda _{\delta }}{4c_{2}c_{4}}$ we obtain that the second term in
the right-hand side is bounded by%
\begin{equation*}
c_{2}c_{4}\delta ^{n}e^{3r_{0}}K\lambda ^{-1}<\frac{1}{4}\delta ^{n}K.
\end{equation*}%
Finally letting $\delta _{0}=\sqrt[{n-2}]{\frac{c_{2}}{4c_{1}c_{3}}}$,
wee that  the last
term in the right-hand side of $\left( \ref{closedness}\right) $%
\begin{equation*}
c_{3}c_{4}\lambda _{\delta }^{-2}\delta ^{2n-1}K^{2}e^{4r_{0}}\leq \frac{1}{4%
}\delta ^{n}K
\end{equation*}%
for all $\delta <\delta _{0}.$ These choices yield%
\begin{equation*}
\left\Vert \mathcal{F}(\xi )\right\Vert \leq \frac{3}{4}\delta ^{n}K,
\end{equation*}%
hence $ \mathcal{F}(\xi ) \in \mathcal{B}$.

Next we show that $\mathcal{F}$ is a contraction. Let $\xi _{1},\xi _{2}\in 
\mathcal{B},$ and consider
\begin{eqnarray*}
\left\vert R(t,\xi _{1}(t))-R(t,\xi _{2}(t))\right\vert &=&\left\vert 
R_{1}(t)\left( \xi _{1}(t)-\xi _{2}(t)\right) 
+R_{2}(t,\xi _{1}(t))-R_{2}(t,\xi_{2}(t))\right\vert 
\\
&\leq &\left( c_{2}\delta e^{\func{Re}(t)}+c_{3}\left\Vert R_{2}\right\Vert
_{C^{2}}K\delta ^{n}e^{2\func{Re}(t)}\right) \left\vert \xi _{1}(t)-\xi
_{2}(t)\right\vert .
\end{eqnarray*}%
Then we have%
\begin{equation*}
\left\Vert \mathcal{F}(\xi _{1})-\mathcal{F}(\xi _{2})\right\Vert \leq
\sum_{k=1}^{\infty }\lambda _{\delta }^{-1+k}
\sup_{\func{Re}(t)<r_{0}}
\left\vert\left.
\left( R(\xi _{1})-R(\xi _{2})\right)\right\vert _{t-k\log\lambda _{\delta }}
\,
e^{2\func{Re}(t)}\right\vert .
\end{equation*}%
Taking into account the previously obtained upper bound for
$R_{1}$ and $R_{2}$ we get
\begin{equation*}
\left\Vert \mathcal{F}(\xi _{1})-\mathcal{F}(\xi _{2})\right\Vert \leq
c_{5}\left( c_{2}e^{r_{0}}+c_{3}\left\Vert R_{2}\right\Vert
_{C^{2}}Ke^{4r_{0}}\delta ^{n-1}\lambda _{\delta }^{-1}\right) \left\Vert
\xi _{1}-\xi _{2}\right\Vert ,
\end{equation*}%
where $c_{2},c_{3}$ and $c_{5}$ are constants. By choosing $r_{0}$
sufficiently small the contraction constant%
\begin{equation*}
\alpha =c_{5}\left( c_{2}e^{r_{0}}+c_{3}\left\Vert R_{2}\right\Vert
_{C^{2}}Ke^{4r_{0}}\delta ^{n-1}\lambda _{\delta }^{-1}\right) <\frac{1}{2}\,.
\end{equation*}%
It now follows by the contraction mapping principle that there exists a
unique solution $\xi\in\mathcal{B}$ which satisfies equation
(\ref{closeness_separatrices}).

Now we recall that
\begin{equation*}
\psi _{\delta }^{-}=S\circ \widetilde{\psi }{}_{\delta }^{-}
\end{equation*}
and
\begin{equation*}
\varphi _{\delta }^{n}=\widehat{S}\circ \widetilde{\varphi }_{\delta }^{n}
\end{equation*}
Taking the difference we get the estimate: 
\begin{eqnarray*}
\psi _{\delta }^{-}(t)-\varphi _{\delta }^{n}(t)&=&
S\circ \widetilde{\psi }{}_{\delta }^{-}(t)-
\widehat{S}\circ \widetilde{\varphi }_{\delta }^{n}(t)
=
S\circ \widetilde{\psi }{}_{\delta }^{-}(t)
-
S\circ \widetilde{\varphi }_{\delta }^{n}(t)
+O(\delta ^{n})
\\
&=&
S\left( \widetilde{\psi }{}_{\delta }^{-}(t)
-
\widetilde{\varphi }_{\delta }^{n}(t)
\right)
+O(\delta ^{n})=O(\delta^n)\,.
\end{eqnarray*}
We have proved that there is $r_0\in\mathbb R$ and a separatrix solution
of (\ref{Eq:mainfde}) such that
\begin{equation}
\psi _{\delta }^{-}(t)=\varphi _{\delta }^{n}(t)+O(\delta^n)
\end{equation}
for all $t$ with $\Re t<r_0$.

\subsection{Extension lemma\label{Se:extension}}

In the previous section we have seen that the local unstable separatrix of
the flow and the map approximates each other well for the half-plane $\func{%
Re}(t)<r_{0}$ for some $r_{0}\in \mathbb{R}.$ In this section we show that
we can extend the domain in which the separatrices stay close to each other.

Consider two close to identity maps%
\begin{eqnarray}
F &=&\mathrm{id}+\delta G,  \label{Aux_CloseMaps} \\
\widetilde{F} &=&\mathrm{id}+\delta \widetilde{G},  \notag
\end{eqnarray}%
The following lemma gives a closeness result for the difference of their first 
$O(\delta ^{-1})$ iterates.

\begin{lemma}[Extension Lemma]
\label{LemmaExtension}
Suppose two close to identity maps $F_\delta$ and $\tilde F_\delta$
are both defined on a $\delta$-independent convex
neighbourhood of a set $\overline{\mathcal{D}}\subset\mathbb{C}^2$ 
and  on this neighbourhood $\tilde F_\delta=F_\delta+O(\delta^{n+1})$.
Then for any $r>0$ and any $T>0$ 
there are two constants $\delta_0>0$ and $K$ such that
for any two points $z_0,\tilde z_0\in D$ such that 
$z_k:=F_\delta^k(z_0)\in D$
for all $k$, $0\le k\le T\delta^{-1}$, the inequality
 $|\tilde z_0-z_0|\le r\delta^n$ implies that 
\begin{equation*}
\left\vert \tilde z_{k}-z_{k}\right\vert \leq K\delta ^{n}
\qquad\mbox{for all $0\le k\le T\delta^{-1}$}
\end{equation*}
where $\tilde z_k=\tilde F_\delta^k(\tilde z_0)$.
\end{lemma}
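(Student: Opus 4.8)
The plan is to track the difference $w_k := \tilde z_k - z_k$ through the iteration and show it stays of order $\delta^n$ by a discrete Gr\"onwall-type estimate. First I would write, using $F_\delta = \mathrm{id}+\delta G_\delta$ and $\tilde F_\delta = \mathrm{id}+\delta \tilde G_\delta = F_\delta + \delta E_\delta$ with $\|E_\delta\|_{C^1} = O(\delta^{n})$ on the convex neighbourhood, the one-step recursion
\begin{equation*}
w_{k+1} = \tilde F_\delta(\tilde z_k) - F_\delta(z_k)
= \bigl(F_\delta(\tilde z_k)-F_\delta(z_k)\bigr) + \delta E_\delta(\tilde z_k).
\end{equation*}
Since $F_\delta = \mathrm{id}+\delta G_\delta$ and the segment $[z_k,\tilde z_k]$ lies in the convex neighbourhood (this needs $|w_k|$ to stay small, which will be part of the induction), the mean value inequality gives $|F_\delta(\tilde z_k)-F_\delta(z_k)| \le (1+\delta M)|w_k|$ where $M = \sup \|DG_\delta\|$ over the neighbourhood, uniformly for $\delta<\delta_0$. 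Together with $|\delta E_\delta(\tilde z_k)| \le C\delta^{n+1}$ this yields
\begin{equation*}
|w_{k+1}| \le (1+\delta M)\,|w_k| + C\delta^{n+1}.
\end{equation*}

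Next I would iterate this linear recursion from the initial bound $|w_0|\le r\delta^n$. Solving the recursion explicitly,
\begin{equation*}
|w_k| \le (1+\delta M)^k\, r\delta^n + C\delta^{n+1}\sum_{j=0}^{k-1}(1+\delta M)^j
= (1+\delta M)^k\, r\delta^n + \frac{C\delta^n}{M}\bigl((1+\delta M)^k - 1\bigr).
\end{equation*}
For $0\le k\le T\delta^{-1}$ we have $(1+\delta M)^k \le (1+\delta M)^{T/\delta} \le e^{MT}$, uniformly in $\delta$. Hence
\begin{equation*}
|w_k| \le e^{MT}\, r\delta^n + \frac{C}{M}\bigl(e^{MT}-1\bigr)\delta^n =: K\delta^n,
\end{equation*}
with $K = K(r,T,M,C)$ independent of $\delta$ and of the chosen orbit. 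This is the desired estimate.

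The one subtlety to handle carefully — and the main obstacle — is the circularity between "the segment $[z_k,\tilde z_k]$ lies in the domain of definition" (needed to apply the mean value inequality) and "the bound $|w_k|\le K\delta^n$ holds". I would resolve this by running an induction on $k$: the maps are defined on a $\delta$-independent neighbourhood of $\overline{\mathcal D}$, so there is a fixed $\rho>0$ with the $\rho$-neighbourhood of $\overline{\mathcal D}$ (and the convex hulls of such pairs of points) contained in the common domain. Shrinking $\delta_0$ so that $K\delta_0^n < \rho$, the bound $|w_j|\le K\delta^n<\rho$ for $j\le k$ guarantees $\tilde z_j$ and the segment $[z_j,\tilde z_j]$ remain in the neighbourhood where both $F_\delta$ and $\tilde F_\delta$ are defined and $C^1$-close, which legitimizes the one-step estimate at stage $k$ and closes the induction. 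Everything else is the elementary discrete Gr\"onwall computation above; the constants $M$ and $C$ come from compactness of $\overline{\mathcal D}$ and analyticity of the families (via Cauchy estimates on the fixed neighbourhood), so they are uniform in $\delta$.
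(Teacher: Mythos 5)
Your proof is correct and takes essentially the same route as the paper: a discrete Gr\"onwall estimate on the difference of the iterates, combined with the textbook bound $(1+\delta M)^{T/\delta}\le e^{MT}$. The paper phrases the Gr\"onwall step via a separately stated summed lemma (its Lemma~\ref{Gronwall}), writing $z_N=z_0+\delta\sum_{k<N}G(z_k)$ and bounding the full sum, whereas you use the one-step recursion $|w_{k+1}|\le(1+\delta M)|w_k|+C\delta^{n+1}$ and solve it explicitly; the two are interchangeable and give the same constant. You also add a point the paper leaves implicit: the induction/bootstrap ensuring that $\tilde z_k$ (and the segment $[z_k,\tilde z_k]$) stays in the fixed neighbourhood where the Lipschitz constant and the $O(\delta^{n+1})$ closeness are valid --- the paper simply invokes convexity and the Lipschitz bound without closing that loop, so your version is, if anything, slightly more careful.
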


To prove this lemma we will need the following Gronwall type inequality.

\begin{lemma}\label{Gronwall}
Let $b\geq 0$ and $(a_{n})$ be an increasing sequence of positive numbers.
If $z_{n}$ is a sequence such that $z_{n}\leqslant a_{n}+b\sum\limits_{k=1}^{n-1}z_{k}$, 
then $z_{n}\leqslant a_{n}\left( 1+b\right) ^{n-1}$.
\end{lemma}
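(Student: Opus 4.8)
The plan is to prove the inequality by induction on $n$. The base case $n=1$ is immediate: the sum $\sum_{k=1}^{0} z_k$ is empty by convention, so the hypothesis reduces to $z_1\le a_1$, and since $(1+b)^0=1$ this is exactly the asserted bound $z_1\le a_1(1+b)^{0}$.

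For the inductive step I would assume the estimate $z_k\le a_k(1+b)^{k-1}$ holds for all $k$ with $1\le k\le n-1$ and insert it into the right-hand side of the hypothesis:
\[
z_n\le a_n+b\sum_{k=1}^{n-1}z_k\le a_n+b\sum_{k=1}^{n-1}a_k(1+b)^{k-1}.
\]
The one place where monotonicity of $(a_k)$ enters is now: since $a_k\le a_n$ for every $k\le n-1$, we may pull this coefficient out of the sum and bound
\[
z_n\le a_n\Bigl(1+b\sum_{j=0}^{n-2}(1+b)^{j}\Bigr).
\]
Summing the finite geometric progression, $\sum_{j=0}^{n-2}(1+b)^{j}=\bigl((1+b)^{n-1}-1\bigr)/b$ when $b>0$, so the bracket equals $1+\bigl((1+b)^{n-1}-1\bigr)=(1+b)^{n-1}$, giving $z_n\le a_n(1+b)^{n-1}$ and completing the induction.

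There is essentially no obstacle in this argument; the only minor points needing attention are the empty-sum convention used in the base case and the degenerate case $b=0$, for which the geometric-sum formula must be replaced by the trivial observation that the hypothesis then reads $z_n\le a_n=a_n(1+0)^{n-1}$ outright. The result then follows for all $n$ by induction, and this is exactly the discrete Gronwall estimate needed in the proof of Lemma~\ref{LemmaExtension}.
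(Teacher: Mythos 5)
Your proof is correct and follows essentially the same induction argument as the paper: apply the inductive hypothesis inside the sum, use monotonicity of $(a_n)$ to pull out $a_n$, and sum the resulting geometric progression. The only difference is cosmetic (indexing and your explicit, careful treatment of the degenerate case $b=0$, which the paper leaves implicit).
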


\begin{proof}
By induction. The statement is true for $n=1$. Assume that the statement is
true for all $n\le p$, {\em i.e.} $z_{n}\leqslant a_{n}\left(1+b\right) ^{n-1}$. 
Then for $n=p+1$ we have 
\begin{eqnarray*}
z_{p+1} &\leqslant &a_{p+1}+b\sum_{k=1}^{p}z_{k}\leqslant
a_{p+1}+b\sum_{k=1}^{p}a_{k}\left( 1+b\right) ^{k-1} \\
&\leqslant &a_{p+1}\left( 1+b\sum_{k=1}^{p}\left( 1+b\right) ^{k-1}\right)
=a_{p+1}\left( 1+b\frac{1-\left( 1+b\right) ^{p}}{1-(1+b)}\right) \\
&=&a_{p+1}\left( 1+b\right) ^{p}
\end{eqnarray*}%
By induction the statement is true for all $n\in \mathbb{N}.$
\end{proof}

\medskip

\begin{proof}[Proof of Lemma~\ref{LemmaExtension}] 
Let us estimate the difference of the $N^{th}$ iterates 
of the two maps (\ref{Aux_CloseMaps}).
It is easy to see that
\[
z_N=z_0+\delta\sum_{k=0}^{N-1}G(z_k)
\qquad\mbox{and}\qquad
\tilde z_N=\tilde z_0+\delta\sum_{k=0}^{N-1}\tilde G(\tilde z_k)\,.
\]
Let $\tilde G-G=\delta^n R$, where $R$ is bounded due to the assumptions
of the lemma. Taking the difference
and applying the triangle inequality we get
\begin{eqnarray*}
\left\vert \tilde z_{N}-z_{N}\right\vert  &=&
\left\vert\tilde z_{0}-z_{0}\right\vert 
+\delta \sum_{k=0}^{N-1}
\left\vert\tilde G(\tilde z_{k})- G(z_{k}) \right\vert  \\
&\leq &\left\vert \tilde z_{0}-z_{0}\right\vert +\delta
\sum_{k=0}^{N-1}\left\vert G(\tilde z_k)-G(z_k)
\right\vert +\delta ^{n+1}\sum_{k=0}^{N-1}\left\vert R(\tilde z_{k})\right\vert .
\end{eqnarray*}
Since the domain of $G$ is convex the map is Lipschitz
\[
\left\vert G(\tilde z_{k})-G(z_k)\right\vert\le L|\tilde z_k-z_k|\,,
\]
where $L=\|DG\|$. Moreover $R$ is bounded and we conclude
\begin{equation*}
\left\vert\tilde z_{N}-z_{N}\right\vert \leq \left\vert \tilde z_{0}-z_{0}
\right\vert +\delta L \sum_{k=0}^{N-1}\left\vert\tilde z_{k}-z_{k}\right\vert +
\delta^{n+1}N\left\Vert R\right\Vert .
\end{equation*}
We recall that $b:=|\tilde z_0-z_0|\le r\delta^n$. The sequence
$a_{N}:=\delta ^{n}r+\delta ^{n+1}N\|R\| $ 
is increasing and we apply the Gronwall type
inequality of Lemma \ref{Gronwall} to obtain%
\begin{equation*}
\left\vert \tilde z_{N}-z_{N}\right\vert \leq \left(
\delta^n r
 +\delta ^{n+1}N\left\Vert R\right\Vert
\right) \left( 1+\delta L  \right) ^{N-1}.
\end{equation*}%
Then we use the textbook inequality 
\[
\left( 1+\delta
L\right) ^{\frac{1}{\delta L}}\leq e
\] 
as an upper bound for the last parenthesis
in the right-hand side and get%
\begin{equation*}
\left\vert \tilde z_{N}-z_{N}\right\vert \leq 
\delta^n\left( r+\delta N\left\Vert R\right\Vert \right) 
e^{\delta L (N-1)}.
\end{equation*}
Then for all $0\le N\leq \frac{T}{\delta }$ 
we have
\begin{equation*}
\left\vert \tilde z_{N}-z_{N}\right\vert \leq \delta ^{n} 
\left(
r+T\left\Vert R\right\Vert \right) e^{L T}.
\end{equation*}
Let $K=(r+T \|R\|)\exp(LT)$ to complete the proof. 
\end{proof}

\subsection{Application of the extension lemma}

We will now use Lemma \ref{LemmaExtension} to extend the domain in which the
local separatrix of the flow approximates the local separatrix of the map.
We extend the approximation to the domain defined by%
\begin{equation}
\mathcal{T}_{0}=\left\{ t\in \mathbb{C}:\varphi _{0}(t-s)\in \mathcal{D}%
\;\forall s\geq 0,\;\func{Re}(t)\leq T\right\} ,  \label{Set_Tzero}
\end{equation}%
where $\varphi _{0}$ is the separatrix of the limit flow
$\mu_0\dot{\varphi}_{0}=G_{0}(\varphi _{0}).$ Note that $\mathcal{T}_{0}$ is independent of 
$\delta$.

\begin{corollary}
Let $\psi _{\delta }^{-}(t)$ and $\varphi _{\delta }^{n}(t)$ be defined as
in Theorem \ref{Thm_LAT} and let $T\in \mathbb{R}$ be fixed.  Then 
$\psi _{\delta }^{-}$ is defined on $\mathcal{T}_{0}$ and
\begin{equation*}
\psi _{\delta }^{-}(t)=\varphi _{\delta }^{n}(t)+O(\delta ^{n})
\end{equation*}
uniformly in $\mathcal{T}_{0}.$
\end{corollary}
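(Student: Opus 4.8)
The plan is to combine the local approximation result of Section~\ref{Se:prooflocalsep} (valid on a half-plane $\func{Re}(t)<r_0$) with the Extension Lemma (Lemma~\ref{LemmaExtension}) to propagate the estimate along the separatrix out to all of $\mathcal{T}_0$. First I would fix the constant $T$ from the definition \eqref{Set_Tzero} of $\mathcal{T}_0$ and recall that by the previous section there exist $r_0\in\mathbb{R}$, $K_0>0$, and $\delta_0>0$ such that $\psi^-_\delta$ is defined on $\{\func{Re}(t)<r_0\}$ and $|\psi^-_\delta(t)-\varphi^n_\delta(t)|\le K_0\delta^n$ there. We may assume $r_0<T$. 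The idea is that any point $t\in\mathcal{T}_0$ can be reached from the half-plane $\{\func{Re}(t)<r_0\}$ by applying the map $F_\delta$ a bounded-by-$O(\delta^{-1})$ number of times: indeed, writing $t=t'+m\log\lambda_\delta$ with $\func{Re}(t')\in(r_0-\log\lambda_\delta,r_0]$, the separatrix relation \eqref{Eq:mainfde} gives $\psi^-_\delta(t)=F_\delta^m(\psi^-_\delta(t'))$, and since $\log\lambda_\delta$ is of order $\delta$ by \eqref{Eq:lambda}, while $\func{Re}(t)\le T$, we have $m=O(\delta^{-1})$ uniformly on $\mathcal{T}_0$.

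Next I would verify the hypotheses of Lemma~\ref{LemmaExtension} along these orbits. The maps in question are $F_\delta$ and $\tilde F_\delta:=\Phi^{\log\lambda_\delta}_{\mu_{n,\delta}^{-1}H^n_\delta}$, which by the computation in Section~\ref{Se:prooflocalsep} (see \eqref{Eq:tildeFF}, transferred back through $S$, $\widehat S$) satisfy $\tilde F_\delta=F_\delta+O(\delta^{n+1})$ on $\mathcal D$; the orbit $z_k=F_\delta^k(\psi^-_\delta(t'))=\psi^-_\delta(t'+k\log\lambda_\delta)$ stays in $\mathcal D$ for $0\le k\le m$ precisely because of the ``no-shadow'' condition $\varphi_0(t-s)\in\mathcal D$ for all $s\ge 0$ built into the definition of $\mathcal{T}_0$, combined with the local estimate $\psi^-_\delta=\varphi^n_\delta+O(\delta^n)=\varphi_0+O(\delta)$ on the starting segment and the fact that $\varphi^n_\delta(t+k\log\lambda_\delta)$ tracks $\varphi_0$ along the flow. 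Likewise $\tilde z_k=\tilde F_\delta^k(\varphi^n_\delta(t'))=\varphi^n_\delta(t'+k\log\lambda_\delta)$ since $\varphi^n_\delta$ is (up to the time-rescaling $\mu_{n,\delta}^{-1}\log\lambda_\delta$) a trajectory of the interpolating flow. The initial discrepancy is $|\psi^-_\delta(t')-\varphi^n_\delta(t')|\le K_0\delta^n$, so Lemma~\ref{LemmaExtension} (with $r=K_0$, $T$ the fixed constant, $n$ the given order) yields $|\psi^-_\delta(t'+k\log\lambda_\delta)-\varphi^n_\delta(t'+k\log\lambda_\delta)|\le K\delta^n$ for all $0\le k\le m$, and in particular at $k=m$, i.e. $|\psi^-_\delta(t)-\varphi^n_\delta(t)|\le K\delta^n$. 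Since $K$ depends only on $K_0$, $T$, $\|DG_\delta\|$ and $\|R\|$ (all uniform in $\delta<\delta_0$), this bound is uniform on $\mathcal{T}_0$, which is the claim.

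The main obstacle I anticipate is the bookkeeping that guarantees the orbit $z_k$ genuinely stays inside $\mathcal D$ (equivalently, inside the $\delta$-independent convex neighbourhood on which both maps are defined and $\tilde F_\delta=F_\delta+O(\delta^{n+1})$) for \emph{all} $k$ up to $m$, and not merely that $\varphi_0(t-s)\in\mathcal D$: one must absorb the $O(\delta)$ gap between $\psi^-_\delta$ and $\varphi_0$, the $O(\delta)$ gap between $\varphi^n_\delta$ and $\varphi_0$, and the time-rescaling factor $\mu_{n,\delta}^{-1}\log\lambda_\delta=1+O(\delta^n)$, all without letting the orbit leave the neighbourhood. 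This is handled by choosing $\mathcal D$ slightly larger than the set $\{\varphi_0(t-s):t\in\mathcal{T}_0,\,s\ge0\}$ and taking $\delta_0$ small enough that the cumulative drift over $O(\delta^{-1})$ steps, which is itself $O(\delta^n)$ by the conclusion of the Extension Lemma applied inductively (or bootstrapped), never exceeds the safety margin; alternatively one runs the Extension Lemma on a nested sequence of slightly shrinking domains. Once the orbit-containment is secured, the rest is a direct citation of Lemmas~\ref{Gronwall} and~\ref{LemmaExtension}.
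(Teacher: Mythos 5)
Your proposal follows essentially the same route as the paper: start from the local estimate on the half-plane $\Re(t)<r_0$, pick $t_0$ with $\Re(t_0)\leq r_0$, note that $\psi^-_\delta$ and $\varphi^n_\delta$ are obtained by iterating $F_\delta$ and $\tilde F_\delta=\Phi^1_{\mu_{n,\delta}^{-1}\log\lambda_\delta H^n_\delta}$ respectively, observe that the number of iterates needed to reach any $t\in\mathcal T_0$ is $O(\delta^{-1})$, and invoke Lemma~\ref{LemmaExtension}. Your remark about ensuring the orbit stays inside the domain is exactly the point the paper handles by appealing to the ``no-shadow'' condition in the definition of $\mathcal T_0$ (the paper simply notes that $z_k^*$ remains in a $\delta$-neighbourhood of $\mathcal D$); both arguments resolve it the same way, so this is a faithful reconstruction of the paper's proof.
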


\begin{remark}
Uniformly in $\mathcal{T}_{0}$ means that%
\begin{equation*}
\left\vert \psi _{\delta }^{-}(t)-\varphi _{\delta }^{n}(t)\right\vert \leq
K(D,T)\delta ^{n}.
\end{equation*}
\end{remark}

\begin{proof}
By definition%
\begin{equation*}
\psi _{\delta }^{-}(t+\log \lambda _{\delta })=F_{\delta }(\psi _{\delta
}^{-}(t))
\end{equation*}%
and%
\begin{equation*}
\dot{\varphi}_{\delta }^{n}(t)=\mu _{n,\delta }^{-1}J\nabla H_{\delta
}^{n}(\varphi _{\delta }^{n}(t)),
\end{equation*}%
which implies that%
\begin{equation*}
\varphi _{\delta }^{n}(t+\log \lambda _{\delta })=\Phi _{\mu _{n,\delta
}^{-1}H_{\delta }^{n}}^{\log \lambda _{\delta }}(\varphi _{\delta
}^{n}(t))=\Phi _{\mu _{n,\delta }^{-1}\log \lambda _{\delta }H_{\delta
}^{n}}^{1}(\varphi _{\delta }^{n}(t)).
\end{equation*}%
Theorem \ref{Thm_LAT} also states that%
\begin{equation*}
F_{\delta }=\Phi _{H_{\delta }^{n}}^{1}+O(\delta ^{n+1}),
\end{equation*}%
hence%
\begin{equation}
\Phi _{\mu _{n,\delta }^{-1}\log \lambda _{\delta }H_{\delta
}^{n}}^{1}
=\Phi _{(1+O(\delta ^{n}))H_{\delta
}^{n}}^{1}=\Phi _{H_{\delta }^{n}}^{1}+O(\delta ^{n+1})=F_{\delta }+O(\delta
^{n+1}).  \label{Approx_F}
\end{equation}%
Let us denote this map by $\widetilde{F}_{\delta }$.
Then we have 
\begin{equation*}
\varphi _{\delta }^{n}(t+\log \lambda _{\delta })=\widetilde{F}_{\delta
}(\varphi _{\delta }^{n}(t)).
\end{equation*}%
It now follows that $F_{\delta }$ and $\widetilde{F}_{\delta }$ are two
close to identity maps such that $\widetilde{F}_{\delta }=F_{\delta
}+O(\delta ^{n+1}).$ Define%
\begin{equation*}
z_{k}:=F_{\delta }^{k}(\psi _{\delta }^{-}(t_{0}))
\end{equation*}%
and%
\begin{equation*}
z_{k}^{\ast }:=\widetilde{F}_{\delta }^{k}(\varphi _{\delta }^{n}(t_{0})).
\end{equation*}%
Since $\func{Re}(t_{0})\leq r_{0}$ it follows from Theorem \ref%
{Thm_LAT} that%
\begin{equation*}
\left\vert z_{0}-z_{0}^{\ast }\right\vert \leq O(\delta ^{n}).
\end{equation*}%
To estimate $\left\vert \psi _{\delta }^{-}(t)-\varphi _{\delta
}^{n}(t)\right\vert $ we do $\left\lfloor \frac{\func{Re}(t)-r_{0}}{\log
\lambda _{\delta }}\right\rfloor $ iterations of the maps $F_{\delta },%
\widetilde{F}_{\delta },$ hence the number of iterates is bounded by $\frac{%
L(T,r_{0})}{\delta }$ where $L$ is a constant. Finally we note that by the
definition of $\mathcal{T}_{0}$ it follows that $z_{k}^{\ast }$ belongs%
to a $\delta $%
-neighbourhood of $\mathcal{D}$ for all $0\leq k\leq \frac{L(T,r_{0})}{%
\delta }.$ The corollary now follows from Lemma \ref{LemmaExtension}.
\end{proof}

%


\begin{thebibliography}{GST00}
\bibitem{AKN}
Arnold, V.I., Kozlov, V.V., Neishtadt, A.I.,
{\em Mathematical aspects of classical and celestial mechanics. 
Dynamical systems. III.} Third edition. 
Encyclopaedia of Mathematical Sciences, 3. Springer-Verlag, Berlin, 2006.

\bibitem{Seara}
Baldom\'a, I., Seara, T. M., Breakdown of heteroclinic orbits 
for some analytic unfoldings of the Hopf-zero singularity. 
J. Nonlinear Sci. 16 (2006), no. 6, 543--582.

\bibitem{Champneys}
Champneys, A. R. Codimension-one persistence beyond all orders of homoclinic orbits 
to singular saddle centres in reversible systems. Nonlinearity 14 (2001), no. 1, 87--112.

\bibitem{Delshams}
Delshams, A., Ramírez-Ros, R., Exponentially small splitting 
of separatrices for perturbed integrable standard-like maps.
 J. Nonlinear Sci. 8 (1998), no. 3, 317--352. 

\bibitem{Deprit1969}
Deprit, Andr\'e
Canonical transformations depending on a small parameter.
Celestial Mech. 1 1969/1970 12--30. 


\bibitem{Duarte99}
Duarte, P., Abundance of elliptic isles at conservative bifurcations. 
Dynam. Stability Systems 14 (1999), no. 4, 339--356. 

\bibitem{Duarte00}
Duarte, P., Persistent homoclinic tangencies for conservative maps near the identity. 
Ergodic Theory Dynam. Systems 20 (2000), no. 2, 393--438.

\bibitem{DRR}
Dumortier, F., Rodrigues, P.R., Roussarie, R.,
{\em Germs of diffeomorphisms in the plane}. 
Lecture Notes in Mathematics, 902. Springer-Verlag, Berlin-New York, 1981. 197 pp.

\bibitem{FS1990}
Fontich, E.; Sim\'o, C. The splitting of separatrices for analytic diffeomorphisms.  
Ergodic Theory Dynam. Systems  10  (1990),  no. 2, 295--318.

\bibitem{G99}
Gelfreich V., {\em A proof of the exponentially small transversality 
of the separatrices for the standard map}, Comm. Math. Phys. 201/1,  (1999) 155--216

\bibitem{Gel00}
Gelfreich V., {\em Splitting of a small separatrix loop 
near the saddle-center bifurcation in area-preserving maps}, 
Physica D 136, no.3--4, (2000) 266--279

\bibitem{Gel02}
Gelfreich V., {\em Near strongly resonant periodic orbits in a Hamiltonian system}. 
Proc. Nat. Acad. Sci. USA, vol. 99, no. 22, (2002) 13975--13979.

\bibitem{GL01}
Gelfreich V., Lazutkin V., {\em Splitting of Separatrices: 
perturbation theory and exponential smallness}, 
Russian Math. Surveys Vol. 56, no. 3, (2001) 499--558

\bibitem{GN2008}
Gelfreich V., Naudot N. (2008) Analytic invariants associated with a parabolic fixed point in C2
    Ergodic Theory and Dynamics Systems (accepted for publication) 

\bibitem{GS01}
Gelfreich V., Sauzin D. 
{\em Borel summation and splitting of separatrices for the Henon map}, 
Annales l'Institut Fourier, vol. 51, fasc. 2, (2001) p.513--567. 

\bibitem{HakimMallock}
Hakim, V., Mallick, K.,
Exponentially small splitting of separatrices, 
matching in the complex plane and Borel summation. 
Nonlinearity 6 (1993), no. 1, 57--70. 


\bibitem{Lazutkin1984}
Lazutkin, V.F., Splitting of separatrices for the Chirikov standard map. 
J. Math. Sci., New York 128, No. 2, 2687-2705 (2005) 
(translation from a Russian preprint (1984))

\bibitem{Lombardi}
Lombardi, E.,
Oscillatory integrals and phenomena beyond all algebraic orders 
with applications to homoclinic orbits in reversible systems.
Lecture Notes in Mathematics. 1741. Berlin: Springer, 412~p (2000)


\bibitem{Ramirez}
Ram\'irez-Ros, R., Exponentially small separatrix splittings and almost 
invisible homoclinic bifurcations in some billiard tables. Phys. D 210 (2005), no. 3-4, 149--179. 

\bibitem{Simo91}
Sim\'o, C., Broer, H., Roussarie, R., A numerical survey on the Takens-Bogdanov 
bifurcation for diffeomorphisms. European Conference on Iteration Theory (Batschuns, 1989), 
320--334, World Sci. Publ., River Edge, NJ, 1991

\bibitem{Treschev98a}
Treschev, D. Width of stochastic layers in near-integrable two-dimensional symplectic maps. 
Phys. D 116 (1998), no. 1-2, 21--43.

\bibitem{Treschev98b}
Treschev, D., Splitting of separatrices for a pendulum with rapidly oscillating suspension point. 
Russian J. Math. Phys. 5 (1997), no. 1, 63--98 (1998).


\end{thebibliography}
\end{document}